\setlist[itemize]{leftmargin=15pt}
\setlist[enumerate]{leftmargin=*}
\def\subsubsection{\@startsection{subsubsection}{3}%
  \z@{.5\linespacing\@plus.7\linespacing}{.1\linespacing}%
  {\normalfont\bfseries}}
\def\subsection{\@startsection{subsection}{2}%
  \z@{.5\linespacing\@plus.7\linespacing}{.1\linespacing}%
  {\normalfont\bfseries}}
\newenvironment{td}[0]
{\begin{tikzcd}[ampersand replacement=\&, cells={outer sep=2pt, inner sep=2pt},
]
}		
	{\end{tikzcd}}
\newenvironment{gluing-diag}[0]
{\begin{tikzcd}[ampersand replacement=\&, cells={outer sep=1pt, inner sep=0pt}, row sep=15pt, column sep=15pt, font={\small},
labels={rectangle, font={\small}, inner sep=0pt, outer sep=1pt, minimum size=0cm,yshift=1pt},
arrow style=math font,
		arrows={->,semithick, >=stealth'},
baseline=0pt
]}		
	{\end{tikzcd}}
\newenvironment{gluing-diag2}[0]
{\begin{tikzcd}[ampersand replacement=\&, cells={outer sep=1pt, inner sep=0pt}, row sep=5pt, column sep=33pt, font={\small},
labels={rectangle, font={\small}, inner sep=0pt, outer sep=1pt, minimum size=0cm,yshift=1pt},
arrow style=math font,
		arrows={->,semithick, >=stealth'},
baseline=0pt
]}		
	{\end{tikzcd}}
\newenvironment{rg}[0]{
\begin{tikzcd}[ampersand replacement=\&, baseline=0cm, row sep=1cm,
cells={nodes={draw=black!80, fill=white, circle, outer sep=1pt, inner sep=0pt, minimum size=5pt} },
		arrows={-,  draw=black!25, line width=2pt, inner sep=0pt }]
}
	{\end{tikzcd}}
\newenvironment{qd-rel}[0]
{\qdlayout}{\end{tikzcd}}
\newcommand{\qdlayout}[0]
{
	\begin{tikzcd}
		[ampersand replacement=\&, column sep=1.25cm,
		cells={rectangle, inner sep=0pt, outer sep=0pt, minimum width=1.0cm, minimum height=0.625cm},
		labels={rectangle, font=\normalsize, inner sep=0pt, outer sep=2pt, minimum size=0cm},
		arrows={->, semithick, >=stealth'}]
	}
	\newcommand{\Tau}{\mathcal{T}}
	\newcommand{\CM}{ {M}^{\bt}}
	\newcommand{\CP}{ {P}^{\bt}}
	\newcommand{\nuu}{{\varepsilon^\star}}
	\newcommand{\olnuu}{{\ol{\varepsilon}}}
	\newcommand{\kk}{\mathbbm{k}}
	\newcommand{\KK}{\mathrm{K}}
	\newcommand{\mx}{\mathfrak m}
	\newcommand{\Rx}{\mathrm R}
	\newcommand{\Sx}{\mathrm S}
	\newcommand{\A}{\Lambda} %
	\newcommand{\AI}{{\mathrm A}} 
	\newcommand{\BI}{{\mathrm B}}
	\DeclareMathOperator{\D}{D}
	\newcommand{\Db}{\D^{\mathrm{b}}}
	\newcommand{\Dbfd}{\D^{\mathrm{b}}_{\mathrm{fd}}}
	\DeclareMathOperator{\Hom}{Hom}
	\DeclareMathOperator{\id}{id}
	\DeclareMathOperator{\Id}{Id}
	\DeclareMathOperator{\soc}{soc}
	\DeclareMathOperator{\modd}{mod}
	\newcommand{\md}{
\modd}
\newcommand*\bigcdot{\mathpalette\bigcdot@{.75}}
\newcommand*\bigcdot@[2]{\mathbin{\vcenter{\hbox{\scalebox{#2}{$\m@th#1\bullet$}}}}}
\newcommand*{\setumath}[2]{%
  \csname luatexUmath#1\endcsname\displaystyle=#2\relax
  \csname luatexUmath#1\endcsname\luatexcrampeddisplaystyle=#2\relax
  \csname luatexUmath#1\endcsname\textstyle=#2\relax
  \csname luatexUmath#1\endcsname\luatexcrampedtextstyle=#2\relax
  \csname luatexUmath#1\endcsname\scriptstyle=#2\relax
  \csname luatexUmath#1\endcsname\luatexcrampedscriptstyle=#2\relax
  \csname luatexUmath#1\endcsname\scriptscriptstyle=#2\relax
  \csname luatexUmath#1\endcsname\luatexcrampedscriptscriptstyle=#2\relax
}
\newcommand*{\myov}[1]{\overbracket[1pt][-1.75pt]{#1}}
	\newcommand{\bt}{\bigcdot}
	\newcommand{\wt}{\widetilde}
	\newcommand{\ol}{\myov}
	\newcommand{\mt}{\mathtt}
	\newcommand{\UG}{{\mathrm{G}}}
	\newcommand{\omegac}{{\A\!\mathstrut}^{\vee}} %
	\newcommand{\omegad}{{\AI}^{*}} %
	\DeclareMathOperator{\Mat}{Mat}
	\newcommand{\N}{\mathbb{N}}
	\newcommand{\R}{\mathbb{R}}
	\newcommand{\C}{\mathbb{C}}
	\newtheorem{thm}{Theorem}[section]
	\newtheorem{cor}[thm]{Corollary}
	\newtheorem{prp}[thm]{Proposition}
	\newtheorem{lem}[thm]{Lemma}
	\newtheorem{dfn}[thm]{Definition}
	\newtheorem{constr}[thm]{Construction}
	\newtheorem{dfn-sp}[thm]{Definition*}
	\newtheorem{ex}[thm]{Example}
	\newtheorem{ex-sp}[thm]{Example*}
	\newtheorem{rmk}[thm]{Remark}
	\newtheorem{notation}[thm]{Notation}
	\newtheorem*{sketch-proof}{Sketch of proof}
	\newtheorem*{comment-proof}{Comment on proof}
\newtheorem{intro-thm}[]{Theorem}[]
\newenvironment{manualtheorem}[1]{%
  \manualtheoreminner
}{\endmanualtheoreminner}
\newenvironment{manualcorollary}[1]{%
  \manualcorollaryinner
}{\endmanualcorollaryinner}
	\DeclareMathOperator{\Perf}{\mathsf{Perf}}
	\DeclareMathOperator{\Perffd}{\mathsf{Perf}_{fd}}
	\DeclareMathOperator{\rk}{rk}
	\DeclareMathOperator{\rad}{\mathsf{rad}}
	\DeclareMathOperator{\chr}{\mathsf{char}}
\providecommand\@dotsep{5}
\def\listtodoname{List of Todos}
\def\listoftodos{\@starttoc{tdo}\listtodoname}
\newcounter{todocounter}
\newcommand{\blackdiamond}[1][fill=black]{\tikz [x=1ex,y=1ex,line width=.1ex,line join=round, yshift=-0.285ex] \draw  [#1]  (0,.5) -- (.5,1) -- (1,.5) -- (.5,0) -- (0,.5) -- cycle;}%
\address{
Faculty of Mathematics,
Ruhr University Bochum,
Universit\"atsstraße 150,
\mbox{\quad \, 44780 Bochum,
Germany}
}
\email{\href{mailto:Wassilij.Gnedin@ruhr-uni-bochum.de}{Wassilij.Gnedin@ruhr-uni-bochum.de}}
\author{Wassilij Gnedin}
\begin{document}
\title{Calabi-Yau properties of ribbon graph orders}

\begin{abstract}
We 
pursue the order-theoretic approach to ribbon graphs initiated by Kauer and Roggenkamp.
We show that any ribbon graph order is
twisted $1$-Calabi-Yau in general and $1$-Calabi-Yau if the ribbon graph is bipartite. We derive
analogous results for anti-commutative versions of Brauer graph algebras.
\end{abstract}
\maketitle

The Calabi-Yau property of a triangulated category was introduced by Kontsevich.
It has been studied extensively in representation theory, particularly motivated by its connection to cluster theory (see Keller's survey \cite{Keller}).

In many recent works, triangulated Calabi-Yau categories were
associated to triangulated surfaces
(see \cite{Ladkani2017} and references therein).
In homological mirror symmetry, Haiden, Katzarkov and Kontsevich
showed that the partially wrapped Fukaya category of a marked surface with boundary is equivalent to the derived category of a graded finite-dimensional gentle algebra \cite{HKK}.
Vice versa, 
several authors
constructed marked surfaces from finite-dimensional 
as well as infinite-dimensional
gentle algebras (\cites{LP, OPS} respectively \cite{PPP2}).

Earlier, 
 Kauer and Roggenkamp associated to any ribbon graph an \emph{order} which is typically a
non-commutative non-Artinian 
ring 
\cite{Kauer-Roggenkamp}.
Prototypes of their construction
were integral orders of finite groups with cyclic defect.
Since any ribbon graph embeds in a closed, oriented surface,
 such orders have some geometric flavour.

In this paper, we study
 a quiver-theoretic version of the construction by Kauer and Roggenkamp
which we call \emph{ribbon graph orders}.
Like the nodal curve singularity, any ribbon graph order is infinite-dimensional and has infinite global dimension.
By work of Burban and Drozd \cite{Burban-Drozd}, any ribbon graph order is derived-tame.

Similar to the fact that any surface with boundary is some cut of some closed surface, any finite-dimensional
gentle algebra is some quotient of some ribbon graph order.
Presumably, there is a similar connection between
the derived representation theory of a ribbon graph order
and the symplectic geometry of the closed surface of its ribbon graph as in the finite-dimensional setup \cite{LP}*{Remark 3.5.5}.

This is our motivation to study the question whether a ribbon graph order is {symmetric}, or, equivalently, its derived category has a certain $1$-Calabi-Yau property.
An analogous problem was solved by Ladkani
for finite-dimensional Jacobian algebras of triangulated closed surfaces
  \cite{Ladkani2012}.

Since ribbon graph orders are equicharacteristic versions of some group orders, we expected them to be symmetric as well. 
It turns out that this is not true in general.

In Section \ref{sec:basic} of this paper, we recall the notion of a 
\emph{complete
gentle} quiver $(Q,I)$. 
The arrow ideal completion  $\A$ of its path algebra $\kk Q/I$
is 
 a {ribbon graph order}.
We recall that
the derived category $\D^-(\md \A)$ admits a \emph{relative Serre functor} $\mathbb{S}_\A$,
which 
is the  composition
of the derived Nakayama functor $\nu_\A$ and the shift $[1]$.

In Section \ref{sec:canonical} we prove 
the main result of this paper which is an explicit description of the  functor $\nu_\A$.
It can be summarized as follows:

\begin{manualtheorem}{A}[Corollary \ref{cor:A}]\label{thm:1}
The functor $\nu_\A$ is induced by some involution $\nuu$ of the order $\A$.
So
there are isomorphisms  of functors  $\mathbb{S}_\A \cong (\mathstrut_{\text{\textemdash}})_{\scriptsize \nuu} \circ [1]$ and $\mathbb{S}_\A^2 \cong [2]$.
\end{manualtheorem}
In this sense, the order $\A$ is \emph{twisted $1$-Calabi-Yau} and \emph{fractionally $\frac{2}{2}$-Calabi-Yau}.

In Section \ref{sec:symmetric} we study when the ribbon graph order $\A$ is symmetric.
These considerations apply also 
to certain finite-dimensional quotients of $\A$.
Such a quotient, which we will denote by $\AI$, can be obtained from a
Brauer graph algebra by replacing all commutativity relations with anti-commutative ones.

Using the first result we derive its analogue for the \emph{twisted Brauer graph algebra} $\AI$:

\begin{manualcorollary}{B}[Theorem \ref{thm:twisted-BGA} \ref{thm:twisted-BGA2}] \label{cor:2}
The 
functor $\nu_{\AI}$ is induced by some involution 
$\olnuu$
 of the algebra $\AI$.
So there are isomorphisms
$\mathbb{S}_\AI \cong (\mathstrut_{\text{\textemdash}})_{\scriptsize \olnuu}$ and $\mathbb{S}_\AI^2 \cong \Id$.
\end{manualcorollary}
In particular, the 
perfect 
derived 
category of $\AI$ 
 is \emph{twisted $0$-Calabi-Yau} and \emph{fractionally $\frac{0}{2}$-Calabi-Yau}.

Finally, we give several characterizations 
of symmetric ribbon graph orders:
\begin{manualtheorem}{C}[Theorem \ref{thm:symmetry}] \label{thm:3}
The following conditions are equivalent:
\begin{enumerate}
\item The order $\A$ is symmetric, that is, $\nu_\A \cong \Id$ or $\mathbb{S}_\A \cong [1]$,
\item The algebra $\AI$ is symmetric, that is, $\nu_\AI \cong \Id$ or $\mathbb{S}_{\AI} \cong \Id$, 
\item The {graph of }$\A$ is bipartite or the base field $\kk$ has characteristic two.
\end{enumerate}
\end{manualtheorem}

By this result,
twisted Brauer graph algebras
have very similar 
 Calabi-Yau properties as  
 preprojective algebras of Dynkin quivers \cite{Brenner-Butler-King}.
Moreover,
 we may associate several triangulated Calabi-Yau categories 
to any \emph{dimer model} or to any \emph{dessins d'enfants} (Remark \ref{rmk:dessin}).

To prove Theorem \ref{thm:1} we find a Frobenius form on the ribbon graph order.
It can be viewed as a lifted version of the Frobenius form of some Brauer graph algebra.
Corollary \ref{cor:2}
 follows from Theorem \ref{thm:1} and 
the observation 
that symmetricity ascends from orders to certain quotients
 (Proposition \ref{prp:bimodule-quotient}).\\
The main idea to
prove Theorem~\ref{thm:3} 
is to show that the ``minimal'' non-symmetric ribbon graph orders
arise from 
non-bipartite 
circuit graphs. On this way 
we clarify the relationship between ribbon graph orders and Brauer graph algebras.

In summary, any ribbon graph order 
is
``almost''
$1$-Calabi-Yau.
We hope that these results
have an analogue
in the symplectic geometry
of
a closed, oriented surface.
In a forthcoming paper \cite{Gnedin2}, we will investigate further homological properties of ribbon graph orders and twisted Brauer graph algebras.

\section*{Notation}
Throughout this paper we fix the following conventions:
\begin{itemize}[label=$\blackdiamond$]
\item
All modules will be finitely generated left modules.
\item The cardinality of a finite set $M$ will be denoted by $|M|$.
\item
Let $\kk$ denote an arbitrary field,
 $\Rx$  the ring of formal power series $\kk \llbracket t \rrbracket$ in one variable
and $\KK$ its quotient field $\kk(\!(t)\!)$.
\end{itemize}

\section{Basic notions on ribbon graph orders} \label{sec:basic}
In this section we introduce the main notions of this paper.\\
In more detail,  we define ribbon graph orders via complete gentle quivers in the first subsection.
To justify our terminology we 
associate a \emph{ribbon graph} to any complete gentle quiver
 in Subsection \ref{subsec:RGO-graphs}
and 
observe 
that any ribbon graph order is indeed an \emph{order} in the classical sense
in Subsection \ref{subsec:orders1}. 
In the last subsection, we 
recall notions related to the relative Serre functor for one-dimensional orders.

\subsection{From complete gentle quivers to ribbon graph orders}

The following class
of quivers 
 was introduced  by 
C.M.~Ringel \cite{Ringel}: 
\begin{dfn}\label{dfn:cg}
A finite quiver with relations $(Q,I)$ is called 
\emph{complete gentle}
if the following conditions are satisfied: 
\begin{center}
\begin{tabular}{p{0.75\textwidth}c}
\begin{enumerate}
\item \label{dfn:cg1}  There are exactly two arrows starting and exactly two arrows ending at each vertex. 
\item \label{dfn:cg2} For any arrow $b \in Q_1$ there 
are unique arrows $a,c \in Q_1$ such that $c \, b \notin I$ and $b \, a \notin I$.
\item \label{dfn:cg3} The ideal $I$ 
is
generated by paths of length two.
\end{enumerate} &
\quad
$
\begin{tikzcd}[
nodes={inner sep=1pt, outer sep=0pt},
baseline=1cm,
inner sep=0pt, outer sep=0pt,
every label/.append style={inner sep=1pt, font=\small, color=black},
column sep=0.6cm, 
cells={shape=circle},
arrows={->,thick,black},
ampersand replacement=\&
]
 \phantom{.} \ar[rd, ""{name=c, near end,
inner sep =-1pt}] \& \& \phantom{.} \\
  \& \bt \ar[rd, ""{name=d2, near start, inner sep=-1pt,, swap
}] \ar[ru, ""{name=b, near start, 
inner sep =-1pt
}]\&  \\
\phantom{.} 
\ar[ru,""{name=a, near end,swap, inner sep=-1pt
}]
 \& \& \phantom{.} 
\arrow[from=a, to=d2, densely dotted, red, bend right,-]
\arrow[from=c, to=b, densely dotted, red, bend left, -]
\end{tikzcd}$
\end{tabular}
\end{center}
\end{dfn}
The diagram on the right shows the ``local situation'' at each vertex in a complete gentle quiver, where
the zero relations are indicated by dotted lines.

The terminology above is motivated by the fact that any gentle quiver can be extended to a complete gentle quiver.
More precisely, 
the following holds:
\begin{rmk} 
A quiver with relations has a finite-dimensional \emph{gentle} path algebra
if and only if it
can be obtained by deleting one arrow from each oriented cycle of some complete gentle quiver.
\end{rmk}

The next series of examples comes from Lie theory:
\begin{ex} \label{ex:cuspidal}
For any $n \in \N^+$ let $(Q,I)_n$ be the following complete gentle quiver:
$$
\begin{tikzcd}[nodes={
baseline=0pt,
inner sep=1pt}, 
every label/.append style={font=\normalsize, color=black},
column sep=1.45cm, cells={shape=circle},
arrows={->,thick, >=stealth'},
]
\bullet \ar[r, bend left, "a",
""{name=a1, near start, inner sep =-1pt},
""{name=a2, near end,  inner sep =-1pt}
] 
\arrow[out=135,in=225,loop, swap, "x", distance=1cm,<-,
""{name=x1, very near start, inner sep =-1pt},
""{name=x2, very near end, inner sep =-1pt}
]
& \bullet \ar[r, bend left, "a", 
""{name=c1, near start, inner sep =-1pt},
""{name=c2, near end, swap, inner sep =-1pt}
] 
 \ar[l, bend left, "b",
""{name=b1, near start, inner sep =-1pt},
""{name=b2, near end, inner sep =-1pt}
] 
& {\bullet}
  \ar[l, bend left, "b",
""{name=d1, near start, swap, inner sep =-1pt},
""{name=d2, near end, inner sep =-1pt}
]  
&[-1cm] \ldots
&[-1.5cm]
 \phantom{\bullet}   \ar[r, bend left, "a", 
""{name=e1, near start, inner sep =-1pt},
""{name=e2, near end,  inner sep =-1pt}
] 
&
 \bullet  \ar[l, bend left, "b",
""{name=f1, near start,  inner sep =-1pt},
""{name=f2, near end, inner sep =-1pt}
]  
\arrow[out=45,in=315,loop, "y", distance=1cm,
""{name=z1, very near start, inner sep =-1pt},
""{name=z2, very near end, inner sep =-1pt}
] \\[-1.05cm]
{\scriptstyle 1} & {\scriptstyle 2} & {\scriptstyle 3} & & & {\scriptstyle n}
\arrow[from=x1, to=a1, densely dotted, red, bend left,-]
\arrow[from=a2, to=c1, densely dotted, red, bend left,-]
\arrow[from=d2, to=b1, densely dotted, red, bend left,-]
\arrow[from=b2, to=x2, densely dotted, red, bend left,-]
\arrow[from=e2, to=z1, densely dotted, red, bend left,-]
\arrow[from=z2, to=f1, densely dotted, red, bend left,-]
\end{tikzcd} 
\qquad
I = \langle
a^2, ax, xb, b^2, by, ya \rangle
$$
The quiver $(Q,I)_n$ 
appears in the study of
 cuspidal representations over the Lie algebra $\mathfrak{sl}_{n+1}(\C)$ by work of Grantcharov and Serganova \cite{Grantcharov-Serganova}.
\end{ex}

Throughout this paper, we will use the following notation:
\begin{notation}\label{not:arrow}
Let $a \in Q_1$ be an arrow in a complete gentle quiver $(Q,I)$.
\begin{itemize}[label={$\blackdiamond$}]
\item 
We denote
by $\sigma(a)$ the unique arrow such that $\sigma(a) \,a \notin I$. This defines a permutation $\sigma\!: Q_1 \to Q_1$ which we will call \emph{the permutation of the quiver $(Q,I)$}.
\item We denote 
by $c_a$ the unique repetition-free cyclic path starting with the arrow $a$ and
 by $n(a)$ the length
 of the cyclic path $c_a$.
\end{itemize}
\end{notation}

\begin{ex}\label{ex:3quivers}
We will use the following examples of complete gentle quivers:
$$
\begin{array}{ccc}
(Q',I') \quad& (Q'',I'')\quad & (Q''',I''')\\
\begin{tikzcd}[,nodes={
baseline=0pt,
inner sep=1pt}, 
every label/.append style={inner sep=1pt, font=\small, color=black},
column sep=0.6cm, cells={shape=circle},
row sep=1.35cm,
arrows={->,thick,black,
 >=stealth',yshift=0mm},
ampersand replacement=\&
]
\& 
\bt 
\arrow[rd,bend left,looseness=1, "{c}",
""{name=e1, pos=0.2, description,transparent, inner sep =-1pt},
""{name=e2,  pos=0.8,description,transparent, inner sep =-1pt}, out=45, in = 135
]
\ar[rd, "d"{xshift=1pt, yshift=1pt}, swap, <-,
""{name=a1, pos=0.25,description,transparent, inner sep =-1pt},
""{name=a2, pos=0.75,description,transparent, inner sep =-1pt} 
]
\&
\\
\ar[ru, "{b}"{xshift=-1pt, yshift=2pt}, <-, swap,
""{name=c1, pos=0.25,description,transparent, inner sep =-1pt},
""{name=c2, pos=0.75,description,transparent, inner sep =-1pt}]
\bt \ar[ru, bend left,looseness=1, "{a}", 
""{name=d1, pos=0.2,description,transparent, inner sep =-1pt},
""{name=d2, pos=0.8,description,transparent, inner sep =-1pt}, out=45, in = 135
]  
\& \&
\bt 
\ar[ll, "f", swap,<-,
""{name=b1, pos=0.25, description,transparent,inner sep =-1pt},
""{name=b2, pos=0.75,description,transparent, inner sep =-1pt}] 
\ar[ll, bend left, looseness=1, "{e}",
""{name=f1, pos=0.2,description,transparent, inner sep =-1pt},
""{name=f2, pos=0.8, description,transparent,inner sep =-1pt} , out=45, in = 135
]
\arrow[from=e2, to=f1, densely dotted, red, bend left=75,  looseness = 1.5, -]
\arrow[from=f2, to=d1, densely dotted, red, bend left=75,  looseness = 1.5, -]
\arrow[from=d2, to=e1, densely dotted, red, bend left=75,  looseness = 1.5, -]
\arrow[from=c2, to=a1, densely dotted, red, bend right, -]
\arrow[from=a2, to=b1, densely dotted, red, bend right, -]
\arrow[from=b2, to=c1, densely dotted, red, bend right, -]
\end{tikzcd}
\quad
&
\begin{tikzcd}[,nodes={
baseline=0pt,
inner sep=1pt}, 
every label/.append style={inner sep=1pt, font=\small, color=black},
column sep=0.6cm, cells={shape=circle},
row sep=1.35cm,
arrows={->,thick,black,
 >=stealth',yshift=0mm},
ampersand replacement=\&
]
\& 
\bt 
\arrow[rd,bend left,looseness=1, "{e}",
""{name=e1, pos=0.2, description,transparent, inner sep =-1pt},
""{name=e2,  pos=0.8,description,transparent, inner sep =-1pt}, out=45, in = 135
]
\ar[rd, "b"{xshift=1pt, yshift=1pt}, swap,
""{name=a1, pos=0.25,description,transparent, inner sep =-1pt},
""{name=a2, pos=0.75,description,transparent, inner sep =-1pt} 
]
\&
\\
\ar[ru, "{d}"{xshift=-1pt, yshift=2pt}, , swap,
""{name=c1, pos=0.25,description,transparent, inner sep =-1pt},
""{name=c2, pos=0.75,description,transparent, inner sep =-1pt}]
\bt \ar[ru, bend left,looseness=1, "{a}", 
""{name=d1, pos=0.2,description,transparent, inner sep =-1pt},
""{name=d2, pos=0.8,description,transparent, inner sep =-1pt}, out=45, in = 135
]  
\& \&
\bt 
\ar[ll, "f", swap,
""{name=b1, pos=0.25, description,transparent,inner sep =-1pt},
""{name=b2, pos=0.75,description,transparent, inner sep =-1pt}] 
\ar[ll, bend left, looseness=1, "{c}",
""{name=f1, pos=0.2,description,transparent, inner sep =-1pt},
""{name=f2, pos=0.8, description,transparent,inner sep =-1pt} , out=45, in = 135
]
\arrow[from=e2, to=f1, densely dotted, red, bend left=75,  looseness = 1.5, -]
\arrow[from=f2, to=d1, densely dotted, red, bend left=75,  looseness = 1.5, -]
\arrow[from=d2, to=e1, densely dotted, red, bend left=75,  looseness = 1.5, -]
\arrow[from=c2, to=a1, densely dotted, red, bend right, -]
\arrow[from=a2, to=b1, densely dotted, red, bend right, -]
\arrow[from=b2, to=c1, densely dotted, red, bend right, -]
\end{tikzcd}
\quad
&
\begin{tikzcd}[,nodes={
baseline=0pt,
inner sep=1pt}, 
every label/.append style={inner sep=1pt, font=\small, color=black},
column sep=0.6cm, cells={shape=circle},
row sep=1.35cm,
arrows={->,thick,black,
 >=stealth',yshift=0mm},
ampersand replacement=\&
]
\bt
\arrow[out=135,in=225,loop, swap, "x", distance=1cm,
""{name=x1, very near start, inner sep =-1pt},
""{name=x2, very near end, inner sep =-1pt}
]
\ar[rr, "y",
""{name=a1, pos=0.2, description,transparent,inner sep =-1pt},
""{name=a2, pos=0.75,description,transparent, inner sep =-1pt}]
\& \&
\bt 
\ar[rr, "{c}"{xshift=0pt, yshift=0pt},
""{name=d1, pos=0.25,description,transparent, inner sep =-1pt},
""{name=d2, pos=0.75,description,transparent, inner sep =-1pt}]
\ar[ld, "{b}"{xshift=0pt, yshift=0pt}, 
""{name=b1, pos=0.25,description,transparent, inner sep =-1pt},
""{name=b2, pos=0.75,description,transparent, inner sep =-1pt}]
\& \& \bt
\ar[ld, "h", 
""{name=f1, pos=0.25, description,transparent,inner sep =-1pt},
""{name=f2, pos=0.75,description,transparent, inner sep =-1pt}]
\arrow[out=45,in=315,loop, "g", distance=1cm,
""{name=e1, very near start, inner sep =-1pt},
""{name=e2, very near end, inner sep =-1pt}
]
\\
\& 
\bt 
\ar[rr,bend left=0, "f",
""{name=i1, pos=0.25,description,transparent, inner sep =-1pt},
""{name=i2, pos=0.75,description,transparent, inner sep =-1pt}]
\ar[lu, "{a}"{xshift=0pt, yshift=0pt}, 
""{name=c1, pos=0.25,description,transparent, inner sep =-1pt},
""{name=c2, pos=0.75,description,transparent, inner sep =-1pt}]
\&
\& \bt
\ar[ll, "e", bend left,
""{name=h1, pos=0.25,description,transparent, inner sep =-1pt},
""{name=h2, pos=0.75,description,transparent, inner sep =-1pt}]
\ar[lu, "d"{xshift=0pt, yshift=0pt}, 
""{name=g1, pos=0.25,description,transparent, inner sep =-1pt},
""{name=g2, pos=0.75,description,transparent, inner sep =-1pt} 
]
\arrow[from=x1, to=a1, densely dotted, red, bend left=75, -]
\arrow[from=a2, to=d1, densely dotted, red,
bend left=75, looseness=1.25,
 -]
\arrow[from=d2, to=f1, densely dotted, red, bend right, -]
\arrow[from=f2, to=h1, densely dotted, red, bend left=75,looseness=1.5, -]
\arrow[from=h2, to=c1, densely dotted, red, bend left=75,looseness=1.5, -]
\arrow[from=c2, to=x2, densely dotted, red, bend left, -]
\arrow[from=g2, to=b1, densely dotted, red, bend left, -]
\arrow[from=b2, to=i1, densely dotted, red, bend left, -]
\arrow[from=i2, to=g1, densely dotted, red, bend left, -]
\arrow[from=e2, to=e1, densely dotted, red, bend right, -]
\end{tikzcd}
\end{array}
 $$
The quiver $(Q',I')$ 
 appears in cluster theory.
It has permutation  
 $\sigma = (ba)(dc)(fe)$ and six repetition-free cycles, for example, $c_a = ba$ and $c_b = ab$.
Moreover, the quiver $Q'$ has a potential 
$W = eca + cae + aec+fdb+dbf + bfd$
such that
$I = \langle
ca, ae, ec, db, bf, fd
\rangle = \langle \partial_a W \ | \ a\in Q_1 \rangle$.
\end{ex}

Since each
arrow of a complete gentle quiver 
$(Q,I)$
 lies on an oriented cycle, its path algebra $\kk Q/I$ is infinite-dimensional.
Ribbon graph orders
 are  given essentially by such path algebras:

\begin{dfn}\label{dfn:RGO}
A 
ring
$\A$ will be called a \emph{ribbon graph order} if there is
 a field $\kk$ and
a complete gentle quiver $(Q,I)$ such that
the arrow ideal completion of the path algebra $\kk Q/I$
is Morita equivalent
to the ring
$\A$.
 \end{dfn}

\begin{ex}
The nodal curve singularity $\A = \kk \llbracket x,y \rrbracket / (xy)$ is 
 isomorphic to
the arrow ideal completion of the path algebra $\kk [ x,y ] /(xy)$ of a complete gentle quiver.
\end{ex}

\subsection{The ribbon graph of a complete gentle quiver}\label{subsec:RGO-graphs}

We will use the next definition to characterize  symmetric ribbon graph orders:
\begin{constr}\label{constr:graph}
For any complete gentle quiver $(Q,I)$
we define
its \emph{graph} $\UG:$
\begin{enumerate}
\item 
The \emph{set of nodes} $\UG_0$ of the graph $\UG$ is given by the set of $\sigma$-orbits in $Q_1$.
For any arrow $a \in Q_1$ we denote by $v_a = \{\, \sigma^p(a) \ | \ 
1 \leq
 p \leq n(a) 
 \} $
  its $\sigma$-orbit.
\item
For each vertex $i \in Q_0$ 
we set up an \emph{edge} $e_i$
between the vertices $v_a$ and $v_b$ where $a$ and $b$ denote the two distinct arrows from $Q_1$ starting at the vertex $i$.\\
Such an edge is a \emph{loop}
if $v_a = v_{b}$, otherwise it is called 
\emph{ordinary}.
\end{enumerate}
In particular, the graph $\UG$ may have multiple edges or loops.
\end{constr}
\begin{ex}\label{ex:line}
\begin{enumerate}
\item 
The graph of the ribbon graph order $\kk \langle \!\langle a,b \rangle\! \rangle/(a^2,b^2)$ is a loop.
\item
The graph of the quiver $(Q,I)_n$ from Example \ref{ex:cuspidal} is a line with $n$ edges:
$$\begin{rg} \phantom{.} \ar[-]{r} \& \phantom{.} \ar[-]{r} \& \phantom{.} \ar[draw=white,-]{rr}[description]{\ldots} \& \&    \phantom{.} \ar[-]{r} \&  \phantom{.} \end{rg}$$
\end{enumerate}
\end{ex}
To motivate our terminology, we include a further construction:
\begin{constr} Let $(Q,I)$ be a complete gentle quiver, $\UG$ its graph and $v \in \UG_0$.
The edges incident to the node $v$ are given by the set 
$E_v 
= \{ \, e_{s(a)} \ | \ a \in Q_1\!: v_a = v  \, \}
$. The map 
$
\begin{td} \sigma_v\!: E_v \ar{r} \&  E_v\end{td}$, $\begin{td} e_{s(a)} \ar[mapsto]{r} \& e_{t(a)} \end{td}$
is a cyclic permutation on the set $E_v$. 
The \emph{ribbon graph} of the quiver $(Q,I)$ is given by 
the datum $(\UG, (\sigma_v)_{v\in \UG_0})$.
\end{constr}
\begin{ex}
The ribbon graph of the quiver $(Q'',I'')$ from Example \ref{ex:3quivers} is given by three loops at one vertex $v$ and the 
permutation $\sigma_v = 
(e_{s(a)} e_{s(b)} e_{s(c)} e_{s(d)} e_{s(e)} e_{s(f)})$.
The permutation $\sigma_v$ can be read off from the 
 permutation $\sigma = (abcdef)$ of $(Q'',I'')$.
This is also true in general.
\end{ex}
There is a natural relationship
between idempotent subalgebras of a ribbon graph order and subgraphs of its  graph:
\begin{lem}\label{lem:subgraph}
Let $\A$ be a ribbon graph order, $(Q,I)$ a complete gentle quiver of $\A$ and $\UG$ its graph.
Let $Q'_0$ be some proper subset of
$Q_0$, $e = \sum_{i \in Q'_0} e_i$, and $\A'  = e \A e$.\\
Then 
the idempotent subring $\A'$ 
is a ribbon graph order.
Moreover, its graph $\UG'$
can be obtained from the graph $\UG$ by
removing all edges $e_i$ 
with index $i \in Q_0 \backslash Q'_0$ 
and deleting all nodes of valency zero.
\end{lem}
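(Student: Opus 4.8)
The plan is to realize $\A' = e\A e$ explicitly as the arrow ideal completion of a new complete gentle quiver $(Q',I')$ obtained from $(Q,I)$ by contracting the vertices in $Q_0 \setminus Q'_0$, and then to read off its graph. Since being a ribbon graph order and the associated graph depend only on the Morita equivalence class, I may assume $\A = \widehat{\kQI}$ is itself basic, so that the $e_i$ are the vertex idempotents and $\A' = e\A e$ with $e = \sum_{i \in Q'_0} e_i$.

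First I would construct $(Q',I')$. Take $Q'_0$ as its vertex set. For a vertex $i \in Q'_0$ and one of its two outgoing arrows $a \in Q_1$, consider the $\sigma$-chain $a, \sigma(a), \sigma^2(a), \dots$ of Notation \ref{not:arrow}; since this chain traverses the repetition-free cycle $c_a$, which begins and ends at $i \in Q'_0$, there is a minimal $k \geq 1$ with $t(\sigma^{k-1}(a)) \in Q'_0$. I declare the path $\bar a := \sigma^{k-1}(a)\cdots \sigma(a)\, a$ to be an arrow of $Q'$ from $i$ to $t(\sigma^{k-1}(a))$. By condition \ref{dfn:cg1} each vertex of $Q'$ then acquires exactly two outgoing arrows and, by the dual backward-tracing argument, exactly two incoming arrows, all pairwise distinct. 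For the relations I would set $\bar c\,\bar b \in I'$ unless $\bar c$ begins with the unique allowed continuation $\sigma(\sigma^{k-1}(b))$ of $\bar b$; this makes $(Q',I')$ satisfy \ref{dfn:cg1}--\ref{dfn:cg3}, hence complete gentle, with permutation $\sigma'$ sending $\bar b$ to the new arrow that continues its $\sigma$-chain.

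Next I would prove $\A' \cong \widehat{\kk Q'/I'}$. The key point is a unique factorization: every nonzero path of $\kQI$ with source and target in $Q'_0$ is, by gentleness, a segment $\sigma^{m-1}(a)\cdots a$ of some cycle, and cutting it at the vertices of $Q'_0$ that it meets expresses it uniquely as a product of the arrows $\bar a$, two consecutive such arrows composing nontrivially precisely when their product is not in $I'$. This yields a bijection of path bases matching the quadratic relations, so $e(\kQI)e \cong \kk Q'/I'$. Because each $\bar a$ has length $\geq 1$ in $Q$, the arrow ideal of $Q'$ lands in the arrow ideal of $Q$, and I would check that idempotent truncation commutes with arrow ideal completion to conclude $\A' = e\,\widehat{\kQI}\,e \cong \widehat{\kk Q'/I'}$, so $\A'$ is a ribbon graph order. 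I expect this compatibility of truncation with completion, together with the verification that the $\bar a$ really generate $\A'$ and that no further relations appear, to be the main obstacle; everything else is bookkeeping on $\sigma$.

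Finally I would compute the graph $\UG'$ of $(Q',I')$ via Construction \ref{constr:graph} and compare it with $\UG$. Since $\sigma'$ merely continues $\sigma$-chains, the $\sigma'$-orbit of $\bar a$ consists of the new arrows obtained by segmenting the cycle $c_a$; hence the nodes of $\UG'$ are in natural bijection with those nodes $v_a$ of $\UG$ whose cycle $c_a$ meets $Q'_0$. A node $v_a$ meets $Q'_0$ exactly when some incident edge $e_{s(x)}$, with $x$ in the orbit $v_a$, has $s(x) \in Q'_0$, that is, exactly when $v_a$ is not isolated after deleting the edges $e_i$ with $i \in Q_0 \setminus Q'_0$. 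Under this bijection the edge $e'_i$ of $\UG'$ for $i \in Q'_0$, which joins the $\sigma'$-orbits of the two new arrows leaving $i$, is identified with the edge $e_i$ of $\UG$ joining $v_a$ and $v_b$. Thus $\UG'$ is obtained from $\UG$ by removing the edges $e_i$ with $i \in Q_0 \setminus Q'_0$ and discarding the nodes of valency zero, as claimed.
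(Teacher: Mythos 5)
Your proposal is correct and follows essentially the same route as the paper: both realize $\A'$ as the completed path algebra of the complete gentle quiver whose arrows are the minimal $\sigma$-segments between vertices of $Q'_0$ (you label them by the paths $\bar a$, the paper by their first arrows, which is the same quiver), with the induced permutation $\sigma'$, and then read off the graph from Construction \ref{constr:graph}. Your additional care about unique factorization of paths and the compatibility of idempotent truncation with arrow ideal completion only fills in details the paper leaves implicit.
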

\begin{proof}
The 
ring
$\A'$ is Morita equivalent to the arrow ideal completion of the path algebra of some quiver $(Q',I')$.
This quiver can be described as follows:
\\
Its vertices are given by $Q'_0$
and its set of arrows by
$Q'_1 = \{ a \in Q_1 \ | \ s(a) \in Q'_0 \}$.
For any arrow $a \in Q'_1$ the number $m = \min\{1 \leq p \leq n(a) \ | \ \sigma^p(a) \in Q'_1\}$ is well-defined.
This yields a map $\sigma': Q'_1 \to Q'_1, a \mapsto \sigma^m(a)$.
The start and target maps $s'$ and $t': Q'_1 \to Q'_0$
are given by $s' = s$ respectively $a \mapsto t'(a) = s(\sigma'(a))$.
The ideal of relations is $I' = \langle b\,a \ | \ a,b \in Q'_1: s'(b) = t'(a), b \neq \sigma'(a) \rangle$.\\
It follows that the quiver
$(Q',I')$ is complete gentle 
and 
the map $\sigma'$ is actually its permutation in the sense of Notation \ref{not:arrow}.
In particular, $\A'$ is a ribbon graph order.\\
With this quiver presentation  of the ribbon graph order $\A'$ 
the  description of its graph $\UG'$ follows directly from Construction \ref{constr:graph}. \qedhere
\end{proof}
\begin{ex}
The quivers $(Q',I')$ and $(Q''',I''')$ of Example \ref{ex:3quivers} have the graphs
$$
\begin{rg}
\phantom{.}
\ar[-]{rr} \ar[-]{rd} \& \& \phantom{.}
\\
\& 
\phantom{.}  \ar[-]{ru} 
\&
\end{rg}
\qquad \text{respectively} \qquad
\begin{rg}
\phantom{.} \& \& \phantom{.} \ar[-]{ll} \ar[-]{rr} \ar[-]{rd} \& \& \phantom{.} \arrow[-,out=35,in=325,loop, 
distance=1.25cm, 
] \\
\& \& \& \phantom{.} \ar[-]{ru} \&
\end{rg}
$$
In particular, the completed path algebra
of the quiver $(Q',I')$ is isomorphic to an idempotent subring of the completed path algebra of the quiver $(Q''',I''')$.
\end{ex}

\subsection{Ribbon graph orders are Bäckström orders}
\label{subsec:orders1}

In this subsection, we 
show that
ribbon graph orders are one-dimensional orders.
\begin{dfn}\label{dfn:order}
As above, let $\Rx = \kk \llbracket t \rrbracket$.
An $\Rx$-algebra $\A$ will be called an $\Rx$-order, or simply an \emph{order}, if
the $\Rx$-module  $\mathstrut_{\Rx} \A$ is a free module of finite rank.
\end{dfn}
\begin{rmk}\label{rmk:algebra-structure}
By \cite{Goto-Nishida}*{Corollary 3.2}
an $\Rx$-algebra $\A$ is an $\Rx$-order if and only if 
$\Hom_{\A}(\A/\rad\A,\A) = 0$. 
In particular, the $\Rx$-order property is independent from the choice of an $\Rx$-algebra morphism $\Rx \to \A$.
\end{rmk}
We work over 
the
complete local
 ring $\Rx$
instead of the polynomial ring $\kk [t]$ for the following technical reason:
\begin{rmk} 
Let $\A$ be an $\Rx$-algebra such that 
the module
$\mathstrut_\Rx \A$ is finitely generated.\\
Then
 $\A$ is a \emph{semiperfect} ring, that is, 
any simple $\A$-module has a projective cover. 
In particular, any indecomposable projective $\A$-module is isomorphic to $\A e$ for some primitive idempotent $e$ of $\A$.
Moreover,  
the $\Rx$-algebra $\A$ is Morita equivalent to some basic $\Rx$-algebra $e \A e$ for some idempotent $e \in \A$.
\end{rmk}

To show that any ribbon graph order $\A$ is an order in the sense above, 
we will embed the ring $\A$ in a hereditary order $\Gamma$. Before doing that, we recall a few notions.
An order $\Gamma$ is \emph{hereditary} if any simple $\Gamma$-module has projective dimension one.
Cyclic quivers yield examples of such orders:
\begin{ex}\label{ex:hereditary}
Let $\wt{Q}$ be the $n$-cycle quiver and
$\Gamma$ the arrow ideal completion of its path algebra $\kk \wt{Q}$.
Let $\mx = (t)$ denote the maximal ideal of the local ring
$\Rx$.
\begin{enumerate}
\item If $n=2$, then $\Gamma$ is isomorphic to the following subring of $\Mat_{2 \times 2}(\Rx):$
$$
\wt{Q} = \begin{tikzcd}[ampersand replacement=\&,
nodes={ inner sep=1pt}, labels={rectangle, font=\normalsize, inner sep=1pt, outer sep=1pt, minimum size=0cm},]
\underset{1}{\bullet}\ar[bend left]{r}{a} \& \underset{2}{\bullet} \ar[bend left]{l}{b}
\end{tikzcd} \qquad
\Gamma 
\overset{\sim}{\longrightarrow}
\begin{bmatrix} \Rx & \mx \\ \Rx & \Rx \end{bmatrix}, \quad
e_1 \mapsto \begin{bmatrix}
1 & 0 \\
0 & 0
\end{bmatrix},\
a \mapsto \begin{bmatrix}
0 & 0 \\
1 & 0
\end{bmatrix},  \
b \mapsto \begin{bmatrix}
0 & t \\
0 & 0
\end{bmatrix}.
$$
\item For each $n \in \N^+$, $\Gamma$ is isomorphic to the following 
``triangular'' 
matrix ring:
$$
\begin{td}
\Gamma \ar{r}{\sim} \&
\mathrm{T}_n(\Rx)
 := 
\{
A 
\in \Mat_{n \times n}(\Rx) \ | \ 
a_{ij} \in \mx \text{ for any two indices }1 \leq  i < j \leq n\},
\end{td}
$$
\end{enumerate}
In particular, the center of $\Gamma$ is isomorphic to $\Rx$.
As for complete gentle quivers, for any arrow $a \in \wt{Q}_1$ there is a unique minimal cycle $c_a$ in $\wt{Q}$
starting at $a$.\\
Let $m \in \N^+$ and
$\phi_m\!: \Rx \to \Gamma, \ t \mapsto \sum_{a\in \wt{Q}_1} c_a^m$, that is, 
$\phi_m(t)$ is given by 
the diagonal matrix
$t^m \cdot \mathbbm{1}_n$.
Then the ring $\Gamma$ is a hereditary $\Rx$-order and 
 $\rk_\Rx \Gamma = \ell(\phi_m(t)) = m \, n^2$. 
\end{ex}
Any complete gentle quiver gives rise to some cyclic quivers:
\begin{constr}
\label{constr:normalization}
For any complete gentle quiver $(Q,I)$ 
its \emph{normalization} $\wt{Q}$ is a quiver defined as follows:
\begin{enumerate}
\item 
Its vertices are given by $\wt{Q}_0 = \{ i_a \ | \ a \in Q_1 \}$ and its arrows by $\wt{Q}_1 = Q_1$. 
\item 
The start and the target map $\wt{s}, \wt{t}\!: \wt{Q}_1 \to \wt{Q}_0$ 
are given by $\wt{s}(a) = i_a$ respectively
$\wt{t}(a) = i_{\sigma(a)}$ for each arrow $a \in \wt{Q}_1$.
\end{enumerate}
In particular,
the quiver
$\wt{Q}$ has the following description:\\
Let 
$(n_1, \ldots, n_p)$
denote the cycle type of the
 permutation $\sigma$ of $(Q,I)$
and
 $\wt{Q}_j$ the cyclic quiver with $n_j$ arrows
for each index $1 \leq j \leq p$.
Then 
 $\wt{Q} = \prod_{j=1}^p \wt{Q}_j$.
\end{constr}
\begin{ex}
The normalization of the quiver $(Q''',I''')$ from Example \ref{ex:3quivers}
is given by the product of 
a loop, a two-, a three- and a four-cycle-quiver.
\end{ex}
The following class of rings was studied by Ringel and Roggenkamp \cite{Ringel-Roggenkamp}:
\begin{dfn}\label{dfn:baeckstroem}
 An $\Rx$-algebra $\A$ is \emph{Bäckström}
if there is some hereditary $\Rx$-order $\Gamma$ and a monomorphism of $\Rx$-algebras $\iota\!: \A \hookrightarrow \Gamma$ such that $\iota(\rad \A) = \rad \Gamma$.
\end{dfn}
\begin{lem}\label{lem:baeckstroem1}
Let $\A$ be a Bäckström $\Rx$-algebra with some hereditary overorder $\Gamma$.
Then 
$\A$ is an $\Rx$-order 
such that
$\rk_{\Rx} \A = \rk_{\Rx} \Gamma$ and $\KK \otimes_{\Rx} \A \cong \KK \otimes_{\Rx} \Gamma$.
\end{lem}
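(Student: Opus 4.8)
The plan is to transport everything into the ambient hereditary order $\Gamma$ via $\iota$ and to read off all three assertions from how the chain $\rad\Gamma \subseteq \A \subseteq \Gamma$ behaves under the flat base change $\KK \otimes_\Rx (-)$.

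First I would identify $\A$ with its image $\iota(\A) \subseteq \Gamma$. Since $\rad\A$ is a two-sided ideal of $\A$ and $\iota(\rad\A) = \rad\Gamma$ by the Bäckström hypothesis, we obtain $\rad\Gamma = \iota(\rad\A) \subseteq \iota(\A) = \A$, hence the chain $\rad\Gamma \subseteq \A \subseteq \Gamma$. As $\Gamma$ is a hereditary $\Rx$-order, the module $\mathstrut_\Rx\Gamma$ is free of finite rank (Example \ref{ex:hereditary}); in particular it is Noetherian over the Noetherian ring $\Rx$, so its $\Rx$-submodule $\A$ is finitely generated, and $\A$ is torsion-free as a submodule of a free module. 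Over the discrete valuation ring $\Rx$ a finitely generated torsion-free module is free, so $\mathstrut_\Rx\A$ is free of finite rank. This already shows that $\A$ is an $\Rx$-order in the sense of Definition \ref{dfn:order}.

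For the rank equality and the $\KK$-algebra isomorphism I would observe that every cokernel occurring in the chain is an $\Rx$-torsion module. Indeed, $\mx\Gamma \subseteq \rad\Gamma$ since $\Gamma$ is module-finite over the local ring $\Rx$, so $\Gamma/\rad\Gamma$ is annihilated by $t$ and is therefore a finite-dimensional $\kk$-algebra; consequently its further quotient $\Gamma/\A$ is finite-dimensional over $\kk$ as well, hence $\Rx$-torsion. Applying the exact functor $\KK \otimes_\Rx (-)$ --- exact because $\KK$ is the field of fractions of the domain $\Rx$ --- to the short exact sequence $0 \to \A \xrightarrow{\iota} \Gamma \to \Gamma/\A \to 0$ annihilates the torsion term $\Gamma/\A$. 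Thus $\KK \otimes_\Rx \iota$ is an isomorphism, and as $\iota$ is a ring homomorphism this is an isomorphism of $\KK$-algebras $\KK \otimes_\Rx \A \cong \KK \otimes_\Rx \Gamma$. Comparing $\KK$-dimensions then yields $\rk_\Rx \A = \dim_\KK(\KK \otimes_\Rx \A) = \dim_\KK(\KK \otimes_\Rx \Gamma) = \rk_\Rx \Gamma$.

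I do not expect a serious obstacle: the whole argument hinges on the single observation that all cokernels in the chain $\rad\Gamma \subseteq \A \subseteq \Gamma$ are finite-dimensional over $\kk$, hence $\Rx$-torsion. The only point demanding a little care is invoking the structure theorem over the discrete valuation ring $\Rx$ to upgrade ``finitely generated and torsion-free'' to ``free of finite rank,'' together with the standard fact $\mx\Gamma \subseteq \rad\Gamma$ for module-finite algebras over a local ring.
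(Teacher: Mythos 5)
Your proposal is correct and follows exactly the paper's route: the paper's entire proof is the one-line observation that the cokernel of $\iota\colon \A \hookrightarrow \Gamma$ has finite length, and your argument (via the chain $\rad\Gamma \subseteq \A \subseteq \Gamma$ and the fact that $\Gamma/\rad\Gamma$ is finite-dimensional over $\kk$) is precisely the elaboration of that observation, together with the standard upgrades over the discrete valuation ring $\Rx$.
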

\begin{proof}
Note that the cokernel of the monomorphism $\iota\!: \A \hookrightarrow \Gamma$ has finite length.
This implies the three claims.
\end{proof}
We note that there is a generalization of Lemma \ref{lem:baeckstroem1} given by \cite{Burban-Drozd18}*{Lemma 2.7}.

The next lemma collects the basic properties of ribbon graph orders for later use:
\begin{prp}\label{prp:RGO}
Let $(Q,I)$ be any complete gentle quiver, 
$\wt{Q}$ its normalization quiver.
Let
$\A$ and $\Gamma$ be
 the arrow ideal completions of the path algebras $\kk Q/I$ respectively $\kk \wt{Q}$.
Let $m\!: \UG_0 \to \N^+$ be any map on the set $\UG_0$ of $\sigma$-orbits of $Q_1$.
We set
$$
\begin{td} \phi_m\!: \Rx= \kk \llbracket t \rrbracket  \ar{r}\& \A \end{td}, \qquad \begin{td}t \ar[mapsto]{r} \&\sum_{a \in Q_1} c_a^{m(v_a)} \end{td}.
$$
Then 
the ring $\A$ is an $\Rx$-order.
 More precisely, the following statements hold:
\begin{enumerate}
\item   \label{prp:RGO1}
 The ring $\A$ is a Bäckström $\Rx$-order with hereditary overorder $\Gamma$.
\item  \label{prp:RGO2} 
It holds that
$\rk_{\Rx} \A = 
\ell(\phi_m(t)) = 
{\sum}_{a \in Q_1} m(v_a) \, n(a)$.
\item \label{prp:RGO3} 
The
$\KK$-algebra
$\KK \otimes_{\Rx} \A$ 
is Morita equivalent to the product $\KK^{\times p}$,
where $p= |\UG_0|$.
\item \label{prp:RGO4} For any arrow $a \in Q_1$ the minimal projective resolution of $\A a$ is periodic.
\end{enumerate}
\end{prp}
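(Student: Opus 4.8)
The plan is to deduce parts \ref{prp:RGO1}--\ref{prp:RGO3} from the single structural fact that $\A$ embeds into $\Gamma$ as a Bäckström suborder, and to treat part \ref{prp:RGO4} by a direct syzygy computation. First I would construct the \emph{normalization embedding} $\iota\colon \A \hookrightarrow \Gamma$. On idempotents it sends $e_i \mapsto e_{i_a} + e_{i_b}$, where $a,b$ are the two arrows with source $i$, and on arrows it is the identity $c \mapsto c$ under $\wt{Q}_1 = Q_1$; this is compatible with the start/target maps of Construction \ref{constr:normalization}, since the arrow $c$ runs from $i_c$ to $i_{\sigma(c)}$. One checks that $\iota$ respects the relations: a length-two path $dc$ lies in $I$ exactly when $d \neq \sigma(c)$, and then $i_d \neq i_{\sigma(c)}$, so $dc = 0$ already in $\Gamma$, while $\sigma(c)\,c$ stays nonzero on both sides.

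The key point for part \ref{prp:RGO1} is that $\iota$ is an injective $\Rx$-algebra map with $\iota(\rad \A) = \rad \Gamma$. Injectivity rests on the observation that the nonzero paths of $\kk Q/I$ are precisely the \emph{$\sigma$-chains} $\sigma^{k}(c)\cdots\sigma(c)\,c$ (consecutive products avoid $I$ iff each factor is $\sigma$ of the previous one, by Notation \ref{not:arrow}), and these map bijectively onto the paths along the cyclic quivers $\wt Q_j$. For the radical identity, both $\rad \A$ and $\rad \Gamma$ are the respective arrow ideals; since every path of positive length in $\wt Q$ is such a $\sigma$-chain, it lies in $\iota(\rad \A)$, giving $\rad \Gamma \subseteq \iota(\rad \A)$, while the reverse inclusion is clear. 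As $\iota$ sends $\phi_m(t)$ to the distinguished central element of $\Gamma$, it is $\Rx$-linear, so $\A$ is Bäckström with hereditary overorder $\Gamma$ (the latter being hereditary by Example \ref{ex:hereditary}), and Lemma \ref{lem:baeckstroem1} immediately yields that $\A$ is an $\Rx$-order with $\rk_\Rx \A = \rk_\Rx \Gamma$ and $\KK \otimes_\Rx \A \cong \KK \otimes_\Rx \Gamma$.

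Parts \ref{prp:RGO2} and \ref{prp:RGO3} then become computations over $\Gamma = \prod_{j=1}^p \Gamma_j$, where $\Gamma_j$ is the completed path algebra of the cyclic factor $\wt Q_j$ attached to a $\sigma$-orbit $v$ of size $n_j = n(a)$. Here $\phi_m(t) = \sum_a c_a^{m(v_a)}$ restricts on $\Gamma_j$ to $t^{m(v)} \cdot \mathbbm 1_{n_j}$, so Example \ref{ex:hereditary} applies verbatim and gives $\rk_\Rx \Gamma_j = m(v)\, n_j^2$ and $\KK \otimes_\Rx \Gamma_j \cong \Mat_{n_j}(\KK)$. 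Regrouping the sum $\sum_{a \in Q_1} m(v_a)\, n(a)$ by $\sigma$-orbits (each orbit $v$ contributes $n_v$ arrows, each with $n(a) = n_v$) gives $\sum_v m(v)\, n_v^2 = \rk_\Rx \Gamma = \rk_\Rx \A$, and $\ell(\phi_m(t)) = \dim_\kk \A/\phi_m(t)\A = \rk_\Rx \A$ since $\A$ is $\Rx$-free. For \ref{prp:RGO3}, $\prod_j \Mat_{n_j}(\KK)$ is Morita equivalent to $\KK^{\times p}$ with $p$ the number of $\sigma$-orbits, i.e.\ $p = |\UG_0|$.

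For part \ref{prp:RGO4}, for an arrow $a$ with $j = t(a)$ I would show that right multiplication by $a$ realizes the projective cover $\A e_{j} \twoheadrightarrow \A a$, and compute its kernel: a path $p$ of source $j$ satisfies $pa = 0$ iff its first arrow is the unique arrow $b$ at $j$ with $ba \in I$, namely $b = \sigma(\rho(a))$, where $\rho$ is the fixed-point-free involution swapping the two arrows with a common target. This gives a short exact sequence
$$0 \longrightarrow \A b \longrightarrow \A e_{t(a)} \longrightarrow \A a \longrightarrow 0,$$
so $\syz(\A a) = \A\, \tau(a)$ with $\tau = \sigma \circ \rho$ a permutation of the finite set $Q_1$; iterating, the minimal projective resolution of $\A a$ is periodic with period the length of the $\tau$-orbit of $a$. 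I expect the main obstacle to be part \ref{prp:RGO1}: verifying cleanly that $\iota$ is a well-defined, injective algebra map with $\iota(\rad\A) = \rad\Gamma$, since this is where the combinatorics of complete gentle quivers (nonzero paths as $\sigma$-chains, the vertex-splitting of the normalization) must be handled carefully. Once it is in place, the remaining parts are formal or follow the template of Example \ref{ex:hereditary}.
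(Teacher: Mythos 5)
Your proposal is correct and follows essentially the same route as the paper: the same embedding $\iota\colon \A \hookrightarrow \Gamma$ with $e_i \mapsto \sum_{a\colon s(a)=i} \wt{e}_{i_a}$, the reduction of \ref{prp:RGO2} and \ref{prp:RGO3} to the decomposition $\Gamma \cong \prod_j \mathrm{T}_{n_j}(\Rx)$ via Lemma \ref{lem:baeckstroem1}, and for \ref{prp:RGO4} the syzygy $\syz(\A a) = \A\,\phi(a)$ where your $\tau = \sigma\circ\rho$ is exactly the paper's permutation $\phi$. You supply more detail than the paper does (injectivity via $\sigma$-chains, the radical identity, the explicit short exact sequence), but the argument is the same.
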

\begin{proof}
\begin{enumerate}
\item
It is straightforward to check that $\phi_m(t)$ is a central element of $\A$.
Thus $\phi_m$ defines an $\Rx$-algebra structure on $\A$.\\
Let $\{e_i \ | \ i \in Q_0\}$ and $\{{\wt{e}}_{i_a} \ | \ a\in Q_1 \}$ denote complete sets of 
primitive orthogonal idempotents of the semiperfect ring $\A$ respectively $\Gamma$.
Let $\begin{td} \iota\!: \A \ar[hookrightarrow]{r}\& \Gamma \end{td}$ 
be the unique morphism of $\kk$-algebras such that
$a \mapsto a$ for any $a \in Q_1$ and
$e_i \mapsto \sum_{a \in Q_1: s(a) = i}{\wt{e}}_{i_a}$ for any $i \in Q_0$.
Then $\iota$ is a monomorphism
of $\Rx$-algebras such that $\iota(\rad \A) = \rad \Gamma$.
Thus $\A$ is a Bäckström $\Rx$-order with hereditary overorder $\Gamma$
by  Lemma \ref{lem:baeckstroem1}.
\end{enumerate}
Let $v_{a_1}, \ldots, v_{a_p}$ be a 
complete set of representatives of the set of $\sigma$-orbits $\UG_0$. 
Denote $m_j = m(v_{a_j})$ and $n_j = n(a_j)$ 
for any index $1 \leq j \leq p$.
 Construction \ref{constr:normalization} 
yields that $\Gamma \cong \prod_{j=1}^p \mathrm{T}_{n_j}(\Rx)$
in the notation of
 Example \ref{ex:hereditary}.
The last two statements in Lemma \ref{lem:baeckstroem1} yields the remaining two claims:
\begin{enumerate}
\setcounter{enumi}{1}
\item It holds that
$ \rk_{\Rx} \A = \rk_{\Rx} \Gamma = \ell(\phi_m(t)) =
\sum_{j=1}^p m_j \, n_j^2
= \sum_{a \in Q_1} m(v_a) n(a)$.
\item There are $\KK$-algebra isomorphisms $\KK \otimes_{\Rx} \A \cong \KK \otimes_{\Rx} \Gamma \cong \prod_{j=1}^p \Mat_{n_j \times n_j} (\KK)$. 
\item For any arrow $a \in Q_1$
there is a unique arrow $\phi(a) \in Q_1$ such that $s(\phi(a)) = t(a)$ and $\phi(a) \, a \in I$ by Definition \ref{dfn:cg}.
Then the minimal projective resolution of $\A a$ has period $k = \min \{p \in \N^+ \ | \ \phi^p(a) = a\}$.
\qedhere
\end{enumerate}
\end{proof}

The last proposition shows that any ribbon graph order is Morita equivalent to an $\Rx$-order, and thus also an $\Rx$-order. 

 Proposition \ref{prp:RGO} \ref{prp:RGO3} shows that for any  ribbon graph order $\A$ the  $\KK$-algebra $\KK \otimes_{\Rx} \A$ is \emph{split semisimple}.
In summary, any ribbon graph order is a classical order over the complete discrete valuation ring $\Rx$.

\begin{rmk}
Kauer and Roggenkamp studied generalized versions of ribbon graph orders over a regular local ring $S$ of arbitrary Krull dimension \cite{Kauer-Roggenkamp}.
Our specialization to 
the ring
$\Rx$ 
is motivated by the relationship of  ribbon graph orders to gentle quivers. 
\end{rmk}

\subsection{Relative Serre duality for orders
}
Throughout this subsection, 
let $\A$ 
denote any $\Rx$-order.
In Section \ref{sec:canonical} we will study the following  notions specialized to ribbon graph orders:
\begin{dfn}
\begin{enumerate}
\item 
The \emph{canonical
bimodule of }$\A$
is given by 
$\omegac = \Hom_{\Rx}(\A, \Rx)$.
\item 
The 
\emph{derived Nakayama functor}
of $\A$
is the left-derived tensor product
$$
\begin{td} \nu = \omegac \, {\overset{\mathbb{L}}{\otimes}}_{\A}
\mathstrut_{\text{\textemdash}} :
 \D^-(\A) \ar{r} \&  \D^-(\A) \end{td}$$
on the right-bounded derived category of finitely generated $\A$-modules.
\end{enumerate}

\end{dfn}

It is well-known that 
the category $\D^-(\A)$ 
has 
 a \emph{relative Serre functor}
$$\mathbb{S} = \nu \circ [1]: 
\begin{td} \D^-(\A) \ar{r} \&  \D^-(\A) \end{td}$$
given by the 
by the composition of the functor $\nu$ with the shift functor.
 In different setups this result
was obtained by van den Bergh \cite{van_den_Bergh}*{Lemma 6.4.1}, Ginzburg \cite{Ginzburg}*{Theorem 7.2.14}, Keller \cite{Keller}*{Lemma 4.1}
and Iyama and Reiten \cite{Iyama-Reiten}*{Theorem 3.7}. Our exposition follows mainly the last reference:

\begin{thm}
\label{thm:vdB/IR}
In the setup above, 
the following statements hold:
\begin{enumerate}
\item 
For any perfect complex $\CP \in \Perf(\A)$
and any
bounded complex $\CM \in \Dbfd(\A)$
with finite-dimensional  cohomology 
at each degree
there is an isomorphism
\begin{align} 
\label{eq:serre-duality}
\tag{$\star$}
\Hom_{\D(\A)}(\CP,\CM) \cong 
{\Hom_{\D(\A)}(\CM,\mathbb{S}(\CP))}^*
\end{align}
which is
 functorial in $\CP$ as well as $\CM$, where
 ${(\mathstrut_{\text{\textemdash}})}^*$ 
denotes the $\kk$-linear duality.
\item If the order $\A$ is \emph{Gorenstein},
that is,  $\omega$ is projective as left $\A$-module,
 then
the functor $\mathbb{S}$ restricts to an auto-equivalence of the full subcategory $\Perffd(\A)$ in $\Perf(\A)$
of perfect complexes with finite-dimensional cohomology at each degree.
\end{enumerate}
\end{thm}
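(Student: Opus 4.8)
The plan is to prove part (1) by transporting the whole computation down to the base ring $\Rx$, exploiting that $\Rx = \kk\llbracket t\rrbracket$ is a complete regular (hence Gorenstein) local ring of Krull dimension one; part (2) will then follow formally from the Gorenstein hypothesis together with part (1). For (1) the first input is the coinduction adjunction: since $\omegac = \Hom_\Rx(\A,\Rx)$ is coinduced from $\Rx$ along the $\Rx$-free algebra $\A$, there is a natural isomorphism
\[
\mathbb{R}\Hom_\A(\CM,\omegac)\cong\mathbb{R}\Hom_\Rx(\CM,\Rx)
\]
for every $\CM\in\D^-(\A)$, with no derived correction on the coinduction side because $\mathstrut_{\Rx}\A$ is free.

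The decisive input is local duality over $\Rx$. For $\CM\in\Dbfd(\A)$ the cohomology has finite length over $\Rx$, and on finite-length modules $\mathbb{R}\Hom_\Rx(-,\Rx)$ is concentrated in cohomological degree one and agrees with Matlis duality, which in turn coincides with the $\kk$-linear dual $(-)^*$. This gives $\mathbb{R}\Hom_\A(\CM,\omegac)\cong(\CM)^*[-1]$, and it is precisely here that the shift $[1]$ in $\mathbb{S}=\nu\circ[1]$ originates from $\dim\Rx=1$. I would then combine this with the projection formula for the perfect complex $\CP$, which lets me pull $\CP$ out of the Hom, to obtain
\[
\mathbb{R}\Hom_\A(\CM,\nu\CP)\cong\mathbb{R}\Hom_\A(\CM,\omegac)\overset{\mathbb{L}}{\otimes}_{\A}\CP\cong(\CM)^*[-1]\overset{\mathbb{L}}{\otimes}_{\A}\CP,
\]
so that $\Hom_{\D(\A)}(\CM,\nu\CP[1])\cong\Ho^0\!\big((\CM)^*\overset{\mathbb{L}}{\otimes}_{\A}\CP\big)$. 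Applying $\kk$-linear duality, the tensor--hom adjunction over $\A$, and the fact that the double $\kk$-dual of the finite-dimensional complex $\CM$ is again $\CM$, identifies the $\kk$-dual of the last group with $\Ho^0\,\mathbb{R}\Hom_\A(\CP,\CM)=\Hom_{\D(\A)}(\CP,\CM)$, which is the isomorphism $(\star)$. Every arrow in this chain is natural in $\CP$ and in $\CM$, so bifunctoriality comes for free rather than requiring a separate dévissage in $\CP$.

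For part (2), the hypothesis that $\omegac$ is projective as a left $\A$-module forces it to be projective on both sides -- these conditions are equivalent for orders -- so $\nu=\omegac\overset{\mathbb{L}}{\otimes}_{\A}(-)$ reduces to the exact functor $\omegac\otimes_{\A}(-)$. It carries finitely generated projectives to finitely generated projectives and finite-dimensional modules to finite-dimensional modules, hence preserves both $\Perf(\A)$ and the finite-cohomology condition; since for a Gorenstein order $\omegac$ is an invertible bimodule (a progenerator on both sides of the expected $\Rx$-rank), $\nu$ is a Morita self-equivalence, and therefore $\mathbb{S}=\nu\circ[1]$ is an auto-equivalence that restricts to $\Perffd(\A)$.

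The step I expect to be the main obstacle is setting up local duality over $\Rx$ with the correct normalization: verifying that $\mathbb{R}\Hom_\Rx(-,\Rx)$ contributes exactly the single shift $[1]$, that Matlis duality really coincides with $\kk$-linear duality on finite-length $\kk\llbracket t\rrbracket$-modules, and that the left and right $\A$-module structures remain compatible all along the chain of isomorphisms. By contrast, the coinduction adjunction, the projection formula, and the closing reflexivity argument are formal once the base-ring duality is correctly in place.
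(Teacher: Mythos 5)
Your argument is correct: the paper does not prove this theorem itself but quotes it from the literature (following Iyama--Reiten, Theorem 3.7, among others), and your chain --- the coinduction isomorphism $\mathbb{R}\Hom_\A(\CM,\omegac)\cong\mathbb{R}\Hom_{\Rx}(\CM,\Rx)$, local duality over the one-dimensional Gorenstein base $\Rx$ (which produces the shift $[1]$ and identifies Matlis duality with $\kk$-linear duality on finite-length modules), and the projection formula for the perfect complex $\CP$ --- is precisely the standard proof given in that reference. Part (2) likewise matches the standard argument: for a Gorenstein order $\omegac$ is projective on both sides and a progenerator (the $\Rx$-dual permutes the indecomposable projectives), so $\nu$ is an exact Morita self-equivalence preserving both perfectness and finite-dimensionality of cohomology, hence restricts to $\Perffd(\A)$.
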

The assumptions on the complexes in the theorem above
ensure that the morphism spaces in \eqref{eq:serre-duality} have finite-dimension as $\kk$-linear spaces.
Let us also note that $\Perffd(\A)$ is  a $\kk$-linear triangulated and Hom-finite category. 
It was already known to Kauer and Roggenkamp that any ribbon graph order is Gorenstein \cite{Kauer-Roggenkamp}.

The derived Nakayama functor defines the derived Auslander-Reiten translation of any complex from the category $\Perffd(\A)$: 
\begin{cor}[Happel]\label{cor:AR-theory}
For any indecomposable complex $\CP \in \Perffd(\A)$
there is an \emph{Auslander-Reiten triangle}
 of the form
$$\begin{td} \nu(\CP) \ar{r} \& E^{\bt} \ar{r} \& \CP \ar{r} \& \mathbb{S}(\CP)
\end{td} \text{ in $\D^-(\A)$.}$$
\end{cor}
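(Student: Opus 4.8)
The plan is to obtain the Auslander--Reiten triangle as a formal consequence of the Serre duality \eqref{eq:serre-duality}, following the mechanism of Happel and of Reiten--Van den Bergh.

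First I would record the structural input. As noted after Theorem~\ref{thm:vdB/IR}, $\Perffd(\A)$ is a $\kk$-linear, Hom-finite triangulated category; moreover it is Karoubian, being a thick subcategory of the idempotent-complete category $\Perf(\A)$, so idempotents split and $\End(\CP)$ is a finite-dimensional (hence semiperfect) $\kk$-algebra for every object. Consequently $\Perffd(\A)$ is Krull--Schmidt. When $\A$ is Gorenstein --- which holds for ribbon graph orders \cite{Kauer-Roggenkamp} --- the second part of Theorem~\ref{thm:vdB/IR} shows that $\mathbb{S} = \nu \circ [1]$ restricts to an autoequivalence of $\Perffd(\A)$. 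Thus $\Perffd(\A)$ is a Hom-finite Krull--Schmidt triangulated $\kk$-category carrying the Serre functor $\mathbb{S}$.

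Next I would run the standard construction for an indecomposable $\CP$. Since $\CP$ is indecomposable, $\End(\CP)$ is local. Taking $\CM = \mathbb{S}(\CP)$ in \eqref{eq:serre-duality} and using its functoriality in the first variable yields an isomorphism of right $\End(\CP)$-modules
$$
\Hom_{\D(\A)}(\CP, \mathbb{S}(\CP)) \;\cong\; \End(\CP)^{*},
$$
where $\End(\CP)$ acts on the left-hand side by precomposition. As $\End(\CP)$ is local, its $\kk$-dual $\End(\CP)^{*}$ has a simple socle; I would pick a nonzero element $w$ of that socle, view it as a morphism $w\colon \CP \to \mathbb{S}(\CP)$, and complete it to a triangle. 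Because $\nu$ commutes with the shift, $\mathbb{S}(\CP)[-1] = \nu(\CP)$, so this triangle has the asserted shape $\nu(\CP) \to E^{\bt} \to \CP \xrightarrow{\,w\,} \mathbb{S}(\CP)$.

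It remains to verify that this is an Auslander--Reiten triangle. Non-vanishing of $w$ guarantees that the triangle is non-split and that $\CP$ is indecomposable. By the choice of $w$ in the socle, $w$ annihilates $\rad \End(\CP)$ under precomposition; the defining factorization property then amounts to showing $w \circ \phi = 0$ for every non-retraction $\phi\colon Y \to \CP$. The main obstacle is precisely this step: one must pass from endomorphisms of $\CP$ to arbitrary radical morphisms into $\CP$. I would handle it by reducing to indecomposable $Y$ (splitting $w \circ \phi = 0$ over a direct sum decomposition), treating $Y \cong \CP$, where a non-retraction lies in $\rad \End(\CP)$, separately from $Y \not\cong \CP$, where every map $Y \to \CP$ is radical; in both cases functoriality of \eqref{eq:serre-duality} in the first variable transports the socle condition to the vanishing $w \circ \phi = 0$. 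Via the triangle, this vanishing is equivalent to $\phi$ factoring through $E^{\bt} \to \CP$, which is exactly the Auslander--Reiten property.
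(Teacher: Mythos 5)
Your argument is correct and is exactly the ``straightforward adaptation of an argument due to Happel'' that the paper's one-line proof invokes: choose a nonzero socle element of $\End(\CP)^*\cong\Hom_{\D(\A)}(\CP,\mathbb{S}(\CP))$, complete it to a triangle, and use locality of $\End(\CP)$ plus functoriality of \eqref{eq:serre-duality} to verify the almost-split property. One cosmetic point: to get $\Hom_{\D(\A)}(\CP,\mathbb{S}(\CP))\cong\End(\CP)^*$ directly from \eqref{eq:serre-duality} you should take $\CM=\CP$ and dualize; taking $\CM=\mathbb{S}(\CP)$ as you wrote first gives $\End(\mathbb{S}(\CP))^*$ and needs full faithfulness of $\mathbb{S}$ as an extra (harmless) step.
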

\begin{proof}
The statement
follows from Theorem \ref{thm:vdB/IR}
by a straightforward adaptation of an argument due to Happel \cite{Happel}*{Proof of Theorem 4.5}. 
\end{proof}

This fact yields another motivation for the study of the derived Nakayama functor for ribbon graph orders in the next section.

In Section \ref{sec:symmetric} we will study when ribbon graph orders belong to the following class:

\begin{thm}[\cite{Iyama-Reiten}*{Theorem 3.8}]
The following conditions are equivalent:
\begin{enumerate}
\item 
The $\Rx$-order $\A$
is \emph{symmetric}, that is, there is an 
 isomorphism $\A \cong \omegac$ of $\A$-bimodules.
\item \label{eq:1-CY}
For any perfect complex $\CP \in \Perf(\A)$
and any
 complex $\CM \in \Dbfd(\A)$
with finite-dimensional total cohomology 
there is a bifunctorial isomorphism
\begin{align} 
\label{eq:serre-duality2}
\tag{$\star\star$}
\Hom_{\D(\A)}(\CP,\CM) \cong 
{\Hom_{\D(\A)}(\CM,\CP[1])}^*
\end{align}
\end{enumerate}
In this case,
the category 
$\Perffd(\A)$ is \emph{$1$-Calabi-Yau}, that is, there
is a {natural} isomorphism $\mathbb{S}(\CP) \cong \CP[1]$ for any complex $\CP \in \Perffd(\A)$, where $\mathbb{S}
=  \nu \circ [1]$.
\end{thm}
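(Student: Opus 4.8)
The plan is to establish that symmetricity of $\A$ and the duality \eqref{eq:serre-duality2} are each equivalent to the single statement $\nu \cong \Id$, or, what is the same, $\A \cong \omegac$ as $\A$-bimodules; the pivot in both directions is the relative Serre duality of Theorem \ref{thm:vdB/IR}.

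First I would dispatch the implication from symmetricity to \eqref{eq:serre-duality2}. If $\A \cong \omegac$ as $\A$-bimodules, then $\nu = \omegac \, {\overset{\mathbb{L}}{\otimes}}_{\A} (\mathstrut_{\text{\textemdash}}) \cong \A \, {\overset{\mathbb{L}}{\otimes}}_{\A} (\mathstrut_{\text{\textemdash}}) \cong \Id$ on $\D^-(\A)$, so $\mathbb{S} = \nu \circ [1] \cong [1]$. Substituting $\mathbb{S}(\CP) \cong \CP[1]$ into \eqref{eq:serre-duality} turns it into \eqref{eq:serre-duality2}. The same identity $\mathbb{S}(\CP) \cong \CP[1]$ is the asserted $1$-Calabi-Yau property: since $\omegac \cong \A$ is then projective, $\A$ is Gorenstein, so $\mathbb{S}$ restricts to an autoequivalence of $\Perffd(\A)$ by Theorem \ref{thm:vdB/IR}.

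For the converse I would compare the two dualities. For every perfect $\CP$ and every $\CM \in \Dbfd(\A)$, Theorem \ref{thm:vdB/IR} gives $\Hom_{\D(\A)}(\CP,\CM) \cong \Hom_{\D(\A)}(\CM,\mathbb{S}(\CP))^*$, whereas \eqref{eq:serre-duality2} gives $\Hom_{\D(\A)}(\CP,\CM) \cong \Hom_{\D(\A)}(\CM,\CP[1])^*$. Dualizing and composing yields a bifunctorial isomorphism $\Hom_{\D(\A)}(\CM,\mathbb{S}(\CP)) \cong \Hom_{\D(\A)}(\CM,\CP[1])$, natural in $\CM$. As $\mathbb{S}(\CP)$ and $\CP[1]$ are perfect complexes corepresented in the same way by all $\CM \in \Dbfd(\A)$, uniqueness of the relative Serre functor --- equivalently, the fact that $\Dbfd(\A)$ detects isomorphisms between perfect complexes over the complete order $\A$ --- produces a natural isomorphism $\mathbb{S}(\CP) \cong \CP[1]$, i.e.\ $\nu(\CP) \cong \CP$, for all perfect $\CP$.

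It then remains to convert this functorial identity into the bimodule isomorphism. Evaluating $\nu \cong \Id$ at the generator $\CP = \A$ gives a left-module isomorphism $\omegac \cong \omegac \, {\overset{\mathbb{L}}{\otimes}}_{\A} \A \to \A$; naturality with respect to the right-multiplication maps $\rho_r \in \End_\A(\A)$, which $\nu$ sends to right multiplication on $\omegac$, forces this isomorphism to be right $\A$-linear, hence an isomorphism of bimodules $\omegac \cong \A$. The hard part is the detection step in the previous paragraph: condition \eqref{eq:serre-duality2} furnishes a duality only on the pairing $\Perf(\A) \times \Dbfd(\A)$, and the object $\A$ at which I must evaluate does not lie in $\Dbfd(\A)$; I therefore need that finite-length modules form a detecting class for perfect complexes, so that the restricted duality still determines $\nu$ on all of $\Perf(\A)$, and that the resulting natural transformation of derived tensor functors corresponds, by an Eilenberg--Watts argument, to a genuine map of bimodules.
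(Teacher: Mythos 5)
The paper offers no proof of this statement: it is imported verbatim from Iyama--Reiten (their Theorem~3.8), so there is no in-paper argument to measure your attempt against. Judged on its own, your forward direction is complete and correct: $\A \cong \omegac$ as bimodules gives $\nu \cong \Id$, hence $\mathbb{S} \cong [1]$, the duality $(\star)$ specializes to $(\star\star)$, and since $\omegac$ is then projective the restriction to $\Perffd(\A)$ and the $1$-Calabi-Yau conclusion follow from Theorem~\ref{thm:vdB/IR}.

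The converse contains a genuine gap, which you flag yourself but do not close, and it sits exactly where the content of the theorem lives. Combining $(\star)$ with $(\star\star)$ does give a natural isomorphism $\Hom_{\D(\A)}(\CM,\mathbb{S}(\CP)) \cong \Hom_{\D(\A)}(\CM,\CP[1])$ in $\CM \in \Dbfd(\A)$, but to pass from this to $\mathbb{S}(\CP) \cong \CP[1]$ you assert that $\Dbfd(\A)$ ``detects isomorphisms between perfect complexes.'' Two problems. First, at this stage $\mathbb{S}(\CP) = \nu(\CP)[1]$ is not known to be perfect --- that would require $\omegac$ to be projective, which is part of what is being proved --- so the detection statement you actually need concerns bounded complexes with finitely generated cohomology, not perfect ones (for $\CP=\A$ the two objects to compare are $\omegac[1]$ and $\A[1]$). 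Second, and more seriously, the conservativity (and fullness, which you need in order to transport the natural transformation in $\CP$ and then run the Eilenberg--Watts step on right multiplications) of the restricted Yoneda functor $X \mapsto \Hom_{\D(\A)}(\text{--},X)|_{\Dbfd(\A)}$ is precisely the nontrivial input. For an order over $\Rx = \kk\llbracket t\rrbracket$ it does hold --- one tests against the shifts of $\A/\rad^n\A$ and uses completeness together with Matlis/local duality over $\Rx$, which is how $\Ext^1_\A(\text{--},\omegac)$ recovers the $\kk$-linear dual on finite-length modules --- but none of this appears in your write-up. As written, your proof of $(2) \Rightarrow (1)$ reduces the theorem to an unproved lemma that carries essentially all of its difficulty.
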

Since property \ref{eq:1-CY} above does not refer to the base ring $\Rx$ the following holds:
\begin{cor}\label{cor:symmetry}
The property of a ring $\A$ to be a \emph{symmetric $\Rx$-order} is independent from the choice of an $\Rx$-algebra morphism $\Rx \to \A$.
\end{cor}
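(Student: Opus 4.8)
The plan is to reduce the claim to the equivalence in the theorem of Iyama and Reiten stated immediately above, observing that its second condition is intrinsic to the ring $\A$. First I would fix two $\Rx$-algebra morphisms $\phi,\phi'\!:\Rx\to\A$. By Remark \ref{rmk:algebra-structure} the $\Rx$-order property is insensitive to this choice, so $\A$ is an $\Rx$-order for both structures and the cited theorem applies to each of them.

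The decisive point is the asymmetry between the two equivalent conditions. Symmetry of the order, and with it the canonical bimodule $\omegac = \Hom_\Rx(\A,\Rx)$, the derived Nakayama functor $\nu$, and the relative Serre functor $\mathbb{S}$, all genuinely depend on $\phi$ through the $\Rx$-module structure of $\A$. By contrast, condition \ref{eq:1-CY}, namely the bifunctorial isomorphism \eqref{eq:serre-duality2}, is formulated purely in terms of $\D(\A)$, its subcategories $\Perf(\A)$ and $\Dbfd(\A)$, the shift functor $[1]$, and the $\kk$-linear duality $(\mathstrut_{\text{\textemdash}})^*$. Here I would record that $\A$ carries a fixed $\kk$-algebra structure and that every $\Rx$-algebra map $\Rx=\kk\llbracket t\rrbracket\to\A$ restricts to the structural map $\kk\to\A$; hence the notion of finite-dimensional total cohomology and the duality $(\mathstrut_{\text{\textemdash}})^*$ are unambiguous and independent of $\phi$. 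Consequently condition \ref{eq:1-CY} is one and the same statement for $\phi$ and for $\phi'$.

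The argument then closes at once: if $\A$ is symmetric with respect to $\phi$, the theorem yields \eqref{eq:serre-duality2}, which, being intrinsic, holds verbatim for $\phi'$ as well, so the reverse implication of the theorem makes $\A$ symmetric with respect to $\phi'$; interchanging the roles of $\phi$ and $\phi'$ gives the converse. The only step demanding care — and so the main, if modest, obstacle — is the verification that condition \ref{eq:1-CY} carries no residual dependence on $\phi$, in particular that the two $\Rx$-structures leave the underlying $\kk$-algebra untouched, so that $\Dbfd(\A)$ and $(\mathstrut_{\text{\textemdash}})^*$ coincide in both cases. Once this is granted, the equivalence transports symmetry across different base-ring structures with no further work.
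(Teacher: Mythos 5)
Your argument is correct and is exactly the paper's: the corollary is deduced from the Iyama--Reiten theorem by observing that its second condition \eqref{eq:serre-duality2} is intrinsic to the $\kk$-algebra $\A$ and makes no reference to the chosen map $\Rx \to \A$, so symmetry transports between any two $\Rx$-structures. Your write-up merely makes explicit the details the paper leaves in one sentence.
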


\begin{rmk}
By Proposition \ref{prp:RGO} \ref{prp:RGO4}
 any ribbon graph order $\A$
has infinite global dimension. In particular, the category $\Perf(\A)$
is a proper subcategory of the bounded derived category $\Db(\A)$. 
\end{rmk}

\section{The canonical bimodule of a ribbon graph order}\label{sec:canonical}
To simplify the exposition we assume for the rest of the paper that
\begin{itemize}[label={$\blackdiamond$}]
\item any $\Rx$-order is a basic $\Rx$-algebra.
\end{itemize}
Throughout this section, we fix the following setup:
\begin{itemize}[label=$\blackdiamond$]
\item Let $\A$ be a ribbon graph order, $\kk$ its base field and $(Q,I)$ a complete gentle quiver of $\A$.  
Motivated by Corollary \ref{cor:symmetry}
we choose the
$\Rx$-algebra morphism 
$\Rx  = \kk \llbracket t \rrbracket \to \A, \, t \mapsto \sum_{a \in Q_1} c_a$. We set
$\A^\vee= \Hom_\Rx(\A, \Rx)$.
\end{itemize}
To study
the 
derived Nakayama functor of the order $\A$, 
we give
 an
explicit description of its canonical bimodule $\A^\vee$
in the first subsection.
It turns out that the bimodule $\A^\vee$ is isomorphic to the bimodule $\A_\nuu$ with regular left and 
 right module structure
 twisted by some involution $\nuu$ of $\A$.

In the second subsection we derive our description of the canonical bimodule
using
 the
 following approach:
\begin{enumerate}
\item We choose  $\Rx$-linear bases $\mathcal{B}$ and $\mathcal{B}^\vee$  
of the $\Rx$-modules $\mathstrut_{\Rx} \A$ respectively $\mathstrut_{\Rx} \omegac$.
\item We define an $\Rx$-linear form $\phi$
which induces a bimodule isomorphism $\A_{\nuu} \cong \A^{\vee}$.
\end{enumerate}
This method is 
well-known in the setup of finite-dimensional Frobenius algebras
and based on the Brauer-Nesbitt-Nakayama Theorems
 \cite{Skowronski-Yamagata}*{Chapter IV}.
\subsection{The Nakayama automorphism of a ribbon graph order}
In \cite{Butler-Ringel}
Butler and Ringel defined a 
\emph{polarization} for any string algebra. 
Their notion can be adapted to the setup of ribbon graph orders as follows\footnote{The author would like to thank William Crawley-Boevey
for pointing out the relationship between the polarization of a string algebra and Definition \ref{dfn:polarization}.}:
\begin{dfn}\label{dfn:polarization}
A map $\varepsilon\!: Q_1 \to \{+,-\}$ will be called \emph{a polarization of the ribbon graph order $\A$} if for any two different arrows $a, b \in Q_1$ 
which start at the same vertex 
it holds that $\varepsilon(a) \neq \varepsilon(b)$.\\
In the following we will denote 
$\varepsilon(a)$ by $\varepsilon_a$ for any arrow $a \in Q_1$.
\end{dfn}
We will use polarizations to define a sign map 
on the ribbon graph order:
\begin{dfn}\label{dfn:involution}
For any polarization $\varepsilon$ of $\A$  
we define its \emph{involution} $\nuu: \A \to \A:$
\begin{enumerate}
\item 
For any arrow $a \in Q_1$ and any element $x \in \A$ we set 
$
\varepsilon_a \, x = \begin{cases}
\phantom{-}x & \text{if }\varepsilon_{a} = +,\\
-x & \text{if }\varepsilon_{a} = -.
\end{cases}
$
\item 
The \emph{involution} $\nuu: \A \to \A$ associated to $\varepsilon$ is the unique $\Rx$-algebra
 morphism 
such that 
 $\nuu(e_i)= e_i$ for any vertex $i \in Q_0$
and
$$
\nuu(a) = \varepsilon_{\sigma(a)} \, \varepsilon_{a} \, a
\quad \text{ for any arrow $a \in Q_1$}.
$$
Let us recall that $\sigma(a)$ denotes the unique arrow in $Q_1$ such that $\sigma(a) \, a \notin I$.
\end{enumerate}
In the setup above, let
 $\A_{\nuu}$ denote the $\A$-bimodule with the regular left $\A$-module structure and the right $\A$-module structure given by 
$ a \star b = a \, \nuu(b)$ for any $a,b \in \A$.
\end{dfn}

\begin{ex}
The ribbon graph order $\A = \kk \langle\! \langle a,b \rangle \! \rangle/(a^2,b^2)$
has a polarization 
$\varepsilon\!: Q_1 \to \{+,-\}$ with $a \mapsto +, b \mapsto -$.
For any path $p$ in the quiver of $\A$
it holds that $\nuu(p)
= (-1)^{\ell(p)} \, p$, where $\ell(p)$ denotes the length
of the path $p$.
\end{ex}
Let us already point out that 
 the case of characteristic two is special:
\begin{rmk} \label{rmk:polarization}
For any polarization $\varepsilon$ of the order $\A$ the following holds: 
\begin{itemize}
\item If the base field $\kk$ of the order $\A$ has characteristic two, then $\varepsilon_a \, x = x$ for all $a \in Q_1$ and all $x \in \A$, although the map $\varepsilon$ is not constant. In this case, $\nuu = \id_\A$.
\item Otherwise we may view any polarization as a map $\varepsilon: Q_1 \to \{ 1_\kk, -1_\kk\}$.
\end{itemize}
\end{rmk}

Adapting terminology from the theory of finite-dimensional self-injective algebras \cite{Skowronski-Yamagata}*{Chapter IV}, the involution $\nuu$ of a polarization might be called a \emph{Nakayama automorphism} of the 
ribbon graph 
order $\A$ because 
of the following result:
\begin{thm} \label{thm:A}
Let $\nuu$ be the involution of any polarization of the ribbon graph order $\A$. 
Then there is an isomorphism
 $\A^\vee \cong \A_\nuu$ of $\A$-bimodules.
\end{thm}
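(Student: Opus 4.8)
The plan is to produce an explicit \emph{Frobenius form} $\phi \in \A^\vee$ and to show that the involution $\nuu$ is its Nakayama automorphism, in the sense that
\[
\phi(uv) = \phi\big(v\,\nuu(u)\big)\qquad\text{for all } u,v \in \A,
\]
an identity I will refer to as $(\ast)$. Granting $(\ast)$, the bimodule isomorphism is built as follows. Recall that $\A^\vee = \Hom_\Rx(\A,\Rx)$ carries the $\A$-bimodule structure $(afb)(y) = f(bya)$. Define an $\Rx$-linear map $\Psi\colon \A_\nuu \to \A^\vee$ by $\Psi(x)(y) = \phi(yx)$. Left $\A$-linearity is immediate, and for the right structure one computes, using $(\ast)$ with $u = b$ and $v = yax$, that $\big(a\,\Psi(x)\,b\big)(y) = \phi(byax) = \phi\big(yax\,\nuu(b)\big) = \Psi\big(ax\,\nuu(b)\big)(y)$; since $x \star b = x\,\nuu(b)$ in $\A_\nuu$, this says exactly that $\Psi$ is a morphism of $\A$-bimodules.

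First I would fix the form $\phi$ together with an adapted $\Rx$-basis of $\A$. At each vertex $i \in Q_0$ let $a,b$ be the two arrows with $s(a)=s(b)=i$, labelled so that $\varepsilon_a = +$ and $\varepsilon_b = -$. The nonzero paths out of $i$ are the initial subpaths of the powers of the cyclic paths $c_a$ and $c_b$, and using that multiplication by the central element $t = \sum_{a\in Q_1} c_a$ prolongs a path once around its cycle, one checks that
\[
\mathcal B_i = \{e_i\} \cup \{\sigma^{k}(a)\cdots a \mid 0 \le k \le n(a)-2\} \cup \{\sigma^{k}(b)\cdots b \mid 0 \le k \le n(b)-1\}
\]
is an $\Rx$-basis of the projective $\A e_i$, of cardinality $n(a)+n(b) = \rk_\Rx \A e_i$; the union over all vertices is an $\Rx$-basis $\mathcal B$ of $\A$. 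The asymmetry between the two branches — the $+$-branch stopping one step earlier — is where the polarization enters. I then define $\phi$ to be the $\Rx$-linear functional sending the top basis element $c_b = \sigma^{n(b)-1}(b)\cdots b$ of each $\A e_i$ to $1$ and all remaining elements of $\mathcal B$ to $0$; equivalently, $\phi$ extracts for every vertex the coefficient of the fundamental cycle through its $-$-arrow.

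The key step, and the main obstacle, is the identity $(\ast)$, i.e.\ that $\nuu$ is genuinely the Nakayama automorphism of $\phi$. By $\Rx$-bilinearity it suffices to verify $(\ast)$ on basis paths $u,v$. Both sides vanish unless the product lands in the $\Rx$-span of the distinguished cycles $c_b$, which happens precisely when $u$ and $v$ are complementary arcs of one such cycle; when it does not vanish, passing from $uv$ to $vu$ rotates the cyclic word, so that the fundamental cycle it represents now starts with a $\sigma$-shifted arrow. Tracking the sign is then a telescoping computation: along a repetition-free cycle the product of the factors $\varepsilon_{\sigma(a)}\varepsilon_a$ defining $\nuu$ collapses, and the polarization axiom $\varepsilon_a \ne \varepsilon_b$ at each vertex guarantees that the surviving sign is exactly the one produced by $\nuu(u)$. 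This delicate bookkeeping is what I expect to occupy most of the argument; the toy case $\A = \kk\langle\!\langle a,b\rangle\!\rangle/(a^2,b^2)$ already exhibits the mechanism, since $\phi(c_b)=1$ forces $\phi(c_a) = -1$ and $\nuu(a) = -a$, $\nuu(b) = -b$.

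Finally I would check that $\Psi$ is bijective. Both $\A_\nuu$ and $\A^\vee$ are free $\Rx$-modules of rank $\rk_\Rx \A$, so it is enough to show the Gram matrix $\big(\phi(x_i x_j)\big)_{x_i,x_j \in \mathcal B}$ is invertible over $\Rx$, i.e.\ that its determinant is a unit. The pairing $(x,y)\mapsto\phi(xy)$ sends each basis path to a unique complementary path (the one completing a fundamental cycle), so modulo the maximal ideal $\mx=(t)$ the Gram matrix is, up to signs and reordering of $\mathcal B$, a permutation matrix; hence its determinant lies in $\Rx^\times$. Equivalently, reduction modulo $t$ recovers the standard Frobenius pairing of the associated finite-dimensional (twisted) Brauer graph algebra, which is non-degenerate by Brauer--Nesbitt--Nakayama theory. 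Therefore $\Psi$ is an isomorphism of $\A$-bimodules and $\A^\vee \cong \A_\nuu$.
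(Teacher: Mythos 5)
Your proposal is correct and follows essentially the same route as the paper: an $\Rx$-basis of $\A$ adapted to the polarization, a Frobenius form $\phi$ extracting the coefficient of one distinguished fundamental cycle per vertex, the $\nuu$-symmetry identity $\phi(uv)=\phi(v\,\nuu(u))$ verified by the telescoping of the signs $\varepsilon_{\sigma(a)}\varepsilon_a$ along a cycle, and the bimodule map $x\mapsto x\cdot\phi$. The only (immaterial) differences are that you distinguish the cycles with $\varepsilon=-$ where the paper uses $\varepsilon=+$, and that you prove bijectivity via invertibility of the Gram matrix modulo $(t)$ rather than by exhibiting an explicit inverse as the paper does.
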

The proof of Theorem $\ref{thm:A}$ will occupy the next section.
Let us give a few remarks:

In the setup above,
there is also an isomorphism $\A_\nuu \cong 
\mathstrut_\nuu \A$ of $\A$-bimodules,
where $\mathstrut_\nuu \A$ denotes the $\A$-bimodule with the  left module structure twisted by $\varepsilon^*$.

In brief terms, any polarization yields a Nakayama automorphism of the ribbon graph order.
This automorphism may be trivial for any base field:
\begin{ex}
The nodal singularity $\A = \kk \llbracket x,y \rrbracket/(xy)$ has a polarization 
$\varepsilon\!: Q_1 \to \{+,-\}$ with $x \mapsto +, y \mapsto -$. Its involution $\nuu$ is the identity map on $\A$.\\
Theorem~\ref{thm:A} implies the well-known fact that the curve singularity $\A$ is symmetric.
\end{ex}

As a consequence of Theorem \ref{thm:A}
we obtain the first main result of this paper:
\begin{cor}\label{cor:A}
In the notations 
above
the following statements hold:
\begin{enumerate}
\item \label{cor:A1} 
The
derived Nakayama 
functor $\nu$
of the ribbon graph order $\A$
is isomorphic to
the ordinary tensor product functor
$$
\begin{td} 
(
\mathstrut_\text{\textemdash})_{\nuu} = 
\A_\nuu \, {{\otimes}}_{\A}
\mathstrut_{\text{\textemdash}} :
 \D^-(\A) \ar{r}{\sim} \&  \D^-(\A). \end{td}$$
So there are isomorphisms of functors $\nu^2 \cong \Id$ and $\mathbb{S}^2 \cong [2]$
, where $\mathbb{S} = \nu \circ [1]$.
\item \label{cor:A3}  For any projective complex $\CP$ from $\D^-(\A)$
the complex $\nu(\CP)$ can be computed by application of the involution $\nuu$  
 to each entry in every
differential of the complex $\CP$.
\end{enumerate}
\end{cor}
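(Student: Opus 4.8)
The plan is to derive both statements formally from Theorem~\ref{thm:A}, so that the only real content is the bimodule isomorphism $\omegac \cong \A_\nuu$ proved there. First I would observe that $\A_\nuu$ is \emph{free of rank one as a right $\A$-module}: since $\nuu$ is an algebra automorphism, the map $\nuu\colon \A \to \A_\nuu$ is an isomorphism of right $\A$-modules, because $\nuu(ab) = \nuu(a)\,\nuu(b) = \nuu(a) \star b$. In particular $\A_\nuu$ is flat as a right $\A$-module, so for every complex in $\D^-(\A)$ the derived tensor product $\A_\nuu \overset{\mathbb{L}}{\otimes}_\A \mathstrut_{\text{\textemdash}}$ coincides with the underived one. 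Combining this with Theorem~\ref{thm:A} yields
$$
\nu \;=\; \omegac \overset{\mathbb{L}}{\otimes}_\A \mathstrut_{\text{\textemdash}}
\;\cong\; \A_\nuu \overset{\mathbb{L}}{\otimes}_\A \mathstrut_{\text{\textemdash}}
\;\cong\; \A_\nuu \otimes_\A \mathstrut_{\text{\textemdash}}
\;=\; (\mathstrut_{\text{\textemdash}})_\nuu ,
$$
which is exactly the first assertion of \ref{cor:A1}.

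For the two functorial identities I would use the standard composition rule for twisted bimodules, $\A_\nuu \otimes_\A \A_\nuu \cong \A_{\nuu \circ \nuu}$. Since $\nuu$ is an involution and fixes every idempotent $e_i$, the right-hand side equals $\A_{\id} = \A$, and therefore $\nu^2 \cong \A \otimes_\A \mathstrut_{\text{\textemdash}} \cong \Id$. Because $\nu$ is a triangulated functor it commutes with the shift, so from $\mathbb{S} = \nu \circ [1]$ I obtain $\mathbb{S}^2 \cong \nu^2 \circ [2] \cong [2]$.

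For part~\ref{cor:A3} I would make the twist functor explicit on projectives. As $\nuu(e_i) = e_i$ for every $i \in Q_0$, there is a canonical isomorphism $\A_\nuu \otimes_\A \A e \cong \A e$ for each idempotent $e$; thus $\nu$ carries every projective summand of $\CP$ to itself. It remains to determine the effect on the differentials. Under the identification $\Hom_\A(\A e, \A e') \cong e\A e'$, every component of a differential of $\CP$ is right multiplication by an element $x \in e\A e'$, and a direct computation through the isomorphism $\A_\nuu \otimes_\A \A e \cong \A e$ shows that $\nu$ replaces right multiplication by $x$ with right multiplication by $\nuu(x)$. Writing each differential of $\CP$ as a matrix over $\A$ and applying this entrywise gives the claim.

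I do not anticipate a genuine obstacle, as the corollary is formal once Theorem~\ref{thm:A} is in hand; the only points demanding care are the justification that the derived tensor product collapses to the ordinary one (the right-flatness of $\A_\nuu$) and the bookkeeping of idempotents in the explicit identification used for part~\ref{cor:A3}.
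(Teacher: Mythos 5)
Your proposal is correct and follows essentially the same route as the paper: both reduce everything to Theorem~\ref{thm:A}, use the bimodule isomorphism $\A_\nuu \otimes_\A \A_\nuu \cong \A$ to get $\nu^2 \cong \Id$ and $\mathbb{S}^2 \cong [2]$, and read off the action on differentials from the fact that $\nuu$ fixes the idempotents $e_i$. The only cosmetic difference is that you justify the collapse of the derived tensor product to the ordinary one via the right-freeness of $\A_\nuu$, whereas the paper deduces exactness from the tensor functor being an equivalence of order two; both justifications are valid.
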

\begin{proof}
\begin{enumerate}
\item
By Theorem \ref{thm:A} 
the
Nakayama
functor
is isomorphic to the tensor product
$\begin{td}\A_\nuu \otimes_{\A} \mathstrut_{\text{\textemdash}}: \md \A \ar{r}{\sim} \& \md \A \end{td}$.
Since there is an isomorphism of $\A$-bimodules $\A_\nuu \otimes_\A \A_\nuu \cong \A_{(\nuu)^2} = \A$,
the Nakayama functor is 
an equivalence of order two, and thus it is exact. 
This shows the statements in \ref{cor:A1}.
\item
For any path $p$ from $i$ to $j$ in the quiver $(Q,I)$
 there is a commutative diagram:
$$
\begin{td} \A_\nuu \otimes_{\A} P_j \ar{d}{\id \otimes \cdot p} \ar{r}{\sim} \& \A \nuu(e_j) = P_j \ar[xshift=-5pt,dashed]{d}{\cdot \nuu(p)} \& a \otimes e_j \ar[mapsto]{d}  \&  \ar[mapsto]{l} a \nuu(e_j) \ar[dashed, mapsto]{d} \\
\A_\nuu \otimes_{\A} P_i \ar{r}{\sim} \& \A \nuu(e_i) = P_i \& a \otimes p \ar[mapsto]{r} \& a \nuu(p)
\end{td}
$$
This implies the last statement. \qedhere
\end{enumerate}
\end{proof}
The derived Nakayama functor $\nu$ restricts to the subcategory $\Tau = \Perffd(\A)$.
By Corollary \ref{cor:A} \ref{cor:A1}, the category $\Tau$ 
is \emph{fractionally $\frac{2}{2}$-Calabi-Yau}
and its
 Auslander-Reiten quiver 
is given by homogeneous tubes of rank one or two. 

Since the restricted Serre functor $\mathbb{S}$ is isomorphic to 
the composition
$(\mathstrut_{\text{\textemdash}})_\nuu \circ [1]$  
the category $\Tau$
is \emph{twisted $1$-Calabi-Yau} in the sense of Herschend and Iyama  \cite{Herschend-Iyama}.

By Corollary \ref{cor:A} \ref{cor:A3},
any ribbon graph order $\A$ is \emph{weakly symmetric}, that is, there is an isomorphism $\omegac \otimes_{\A} P \cong P$ for any indecomposable projective $\A$-module $P_i$.

\subsection{Proof of Theorem \ref{thm:A} on the canonical bimodule}
Additionally to the setup at the beginning of Section \ref{sec:canonical}, we choose 
\begin{itemize}[label=$\blackdiamond$]
\item a polarization $\varepsilon\!: Q_1 \to \{+,-\}$ of the ribbon graph order $\A$
as introduced in Definition \ref{dfn:polarization} and denote by $\nuu$ its involution.
\end{itemize}

\subsubsection{Bases of the ribbon graph order and its canonical bimodule}

To choose a good basis of 
the module $\mathstrut_{\Rx} \A$ 
we  need to fix some notation.
The first part concerns paths of certain length:
\begin{notation}
 For any arrow $a$ of the complete gentle quiver $(Q,I)$ we will denote:
\begin{itemize}[label=$\blackdiamond$]
\item As before,
let
$\sigma(a) \in Q_1$ denote
the unique ``successor'' of $a$ such that $\sigma(a)\, a\notin I$.
\item Again, $n(a)$
is the length of the repetition-free cycle $c_a$ starting with 
the arrow  $a$.
\item From now on, for any number $m \in \N^+$
let $a_m$ denote the unique path of length $m$ which starts with the arrow $a$. In other terms, $a_m = \sigma^{m-1}(a) \ldots \sigma(a) \, a$. 
\end{itemize}
Note that
$ a_{n(a)} = c_a$. We need some notation for the following subset of arrows:
\begin{itemize}[label=$\blackdiamond$]
\item We denote by ${Q}^*_1 = \{a \in Q_1 \ | \ c_a \neq a \}$ 
the set of arrows which are not cycles.
\end{itemize}

\end{notation}

\begin{rmk}
For any arrow $a \in Q_1$ the following conditions are equivalent:
\begin{itemize} 
\item[] $(\mathsf{1})$ $ c_a = a$,
\qquad $(\mathsf{2}) \  \sigma(a) = a$,
\qquad $(\mathsf{3}) \ n(a) = 1 $,
\quad $(\mathsf{4}) \  s(a) = t(a)\text{ and }a^2 \notin I$,
\item[] $(\mathsf{5})$
The node $v_a$ of the graph $\UG$  of the complete gentle quiver $(Q,I)$ is a \emph{leaf},\\
 $\phantom{(\mathsf{5})}$ that is, it is incident to only one edge and this edge is ordinary.
\end{itemize}
\end{rmk}
The second part of notation involves the polarization $\varepsilon^\star$ of $\A$ fixed above:
\begin{notation}\label{not:polarized}
For any vertex $i\in Q_0$ we define two cyclic paths $x_i$ and $y_i:$
\begin{itemize}[label=$\blackdiamond$]
\item Set $x_i = c_a$ where $a \in Q_1$ is the unique arrow such that $s(a) = i$ and $\varepsilon_a = 1.$
\item Similarly, set $y_i = c_b$ where $b$ is the unique arrow with $s(b) = i$ and $\varepsilon_b = -1$.
\end{itemize}
Recall that
we use the
 $\Rx$-algebra structure $\Rx \to \A$, $t \mapsto z := \sum_{a\in Q_1} c_a$.
Note that $z e_i = e_i z = x_i + y_i$ for any vertex $i \in Q_0$.

\end{notation}
With the notation above, we may describe a basis of the $\Rx$-modules $\mathstrut_\Rx \A$ and $\mathstrut_{\Rx} \A^\vee:$ 
\begin{lem}\label{lem:basis}
Let $(Q,I)$ be some complete gentle quiver, $\A$ its ribbon graph order and $\varepsilon$ some polarization.
Then the free module  $\mathstrut_{\Rx} \A$ has an $\Rx$-linear basis given by
$$
\mathcal{B}
 = \{ e_i, \ x_i \ | \ i \in Q_0\} 
\cup \{a_m \ | \ a\in Q^*_1, \ 1 \leq m < n(a) \}.$$
In particular, the module $\mathstrut_{\Rx} \omegac$ has 
an $\Rx$-linear basis $\mathcal{B}^\vee := \{p^\vee \ | \ p \in \mathcal{B}\}$
where $p^\vee\!: \A \to \Rx$ is the unique $\Rx$-linear morphism given by
$p^\vee(q) = \delta_{pq}$ for any $q \in \mathcal{B}$.
\end{lem}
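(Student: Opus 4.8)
The plan is to use the multiplication rule for the central element $z$ as the engine, and then to finish by a rank count rather than by a direct linear-independence computation. First I would record that the nonzero paths of $(Q,I)$ — namely the idempotents $e_i$ together with the paths $a_m = \sigma^{m-1}(a)\cdots a$ for $a \in Q_1$ and $m \geq 1$ — form a $\kk$-basis of $\kk Q/I$ and span an arrow-adically dense subspace of the completion $\A$ (a path is nonzero in $\kk Q/I$ exactly when it follows $\sigma$ at every step, since $I$ is generated by the length-two paths $c\,b$ with $c \neq \sigma(b)$). The crucial observation is the identity
\[
z \cdot a_m = a_{m+n(a)} \qquad (a \in Q_1,\ m \geq 1),
\]
which follows by unwinding $z = \sum_{b} c_b$: the only summand $c_b$ that composes with $a_m$ without landing in $I$ is $b = \sigma^{m}(a)$, and $c_{\sigma^m(a)}\, a_m = a_{m+n(a)}$ because $n(\sigma^m(a)) = n(a)$. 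In particular $z\,e_i = x_i + y_i$, as recorded in Notation~\ref{not:polarized}.

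Using this identity I would show that $\mathcal{B}$ generates $\A$ as an $\Rx$-module. For an arbitrary path $a_m$, write $m = q\,n(a) + r$ with $1 \leq r \leq n(a)$; then $a_m = z^q a_r = t^q a_r$, so it suffices to place each $a_r$ with $1 \leq r \leq n(a)$ in the $\Rx$-span $M$ of $\mathcal{B}$. If $r < n(a)$ then $a \in Q_1^*$ and $a_r \in \mathcal{B}$; if $r = n(a)$ then $a_{n(a)} = c_a$ equals $x_{s(a)} \in \mathcal{B}$ when $\varepsilon_a = +$, and equals $y_{s(a)} = z\,e_{s(a)} - x_{s(a)} = t\,e_{s(a)} - x_{s(a)} \in M$ when $\varepsilon_a = -$. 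Together with $e_i \in \mathcal{B}$ this shows every path lies in $M$. Since $M$ is finitely generated over the complete Noetherian local ring $\Rx$, it is closed in the $(t)$-adic topology of $\A$; this topology agrees with the arrow-adic one because $\A$ is an order, and $M$ contains the arrow-adically dense $\kk$-span of all paths, whence $M = \A$.

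To conclude that the generating set $\mathcal{B}$ is actually a basis I would invoke the rank. By Proposition~\ref{prp:RGO}\,\ref{prp:RGO2}, applied with all multiplicities equal to $1$ (the function corresponding to our choice $t \mapsto z$), one has $\rk_{\Rx}\A = \sum_{a \in Q_1} n(a)$, and a direct count using $|Q_1| = 2|Q_0|$ shows that the displayed set $\mathcal{B}$ consists of exactly this many pairwise distinct elements. A surjection of free $\Rx$-modules of the same finite rank is an isomorphism — over the Noetherian ring $\Rx$ a surjective endomorphism of a finitely generated module is injective — so the $\Rx$-linear map sending a standard basis onto $\mathcal{B}$ is an isomorphism, and $\mathcal{B}$ is an $\Rx$-basis of $\mathstrut_\Rx\A$. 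The ``in particular'' statement is then formal: for a finite free module the $\Rx$-dual $\mathstrut_\Rx\omegac = \Hom_\Rx(\A,\Rx)$ is free on the dual basis $\mathcal{B}^\vee = \{\,p^\vee\,\}$ characterised by $p^\vee(q) = \delta_{pq}$.

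The main obstacle I anticipate is bookkeeping rather than algebra: verifying the multiplication rule and the $r = n(a)$ case distinction cleanly (leaves with $n(a)=1$, and the two polarities at each vertex), and justifying the passage from ``every path lies in the $\Rx$-span'' to ``$\mathcal{B}$ spans $\A$'' by closedness of finitely generated submodules rather than asserting it naively. Choosing to finish through the rank count deliberately sidesteps the one genuinely messy step, a direct proof of $\Rx$-linear independence, which would force one to expand $z^k e_i$ into sums of iterated cycles.
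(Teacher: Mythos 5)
Your proposal is correct and follows essentially the same route as the paper's proof: decompose every nonzero path as $z^q$ times an element of $\mathcal{B}$ (using $y_i = z e_i - x_i$ for the negatively polarized cycles) to get generation, then conclude via the rank count $|\mathcal{B}| = \sum_{a \in Q_1} n(a) = \rk_\Rx \A$ from Proposition \ref{prp:RGO} \ref{prp:RGO2} together with $|Q_1| = 2|Q_0|$. Your added care about density of the path span and closedness of the finitely generated $\Rx$-submodule in the completed algebra is a legitimate refinement of a step the paper leaves implicit, not a different argument.
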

\begin{proof}
Let $p$ be some path of $(Q,I)$ starting with some arrow $a \in Q_1$.
\begin{itemize}
\item
If $p$ is not cyclic, then $p = a_{r \,n(a) +m} = z^r a_m$ for some $r \in \N^+$ and $1 \leq m < n(a)$. 
\item If $p$ is cyclic, then $p = c_a^r = z^{r-1} x_i$ or $z^{r-1}y_i$ for some $r \in \N^+$, where $i = s(a)$.
\end{itemize}
In particular, $\mathcal{B}$ forms a set of generators for $\mathstrut_{\Rx} \A$.
Since $|Q_1|  = 2 \, |Q_0|$, Proposition \ref{prp:RGO} \ref{prp:RGO2} implies that $|\mathcal{B}| = \rk_\Rx \A$.
So the set $\mathcal{B}$ is an $\Rx$-linear basis of $\mathstrut_{\Rx} \A$.
\end{proof}
\begin{ex}
\begin{enumerate}
\item Let $\A = \kk \llbracket x,y \rrbracket/(xy)$
 with polarization $\varepsilon_x = +$ and $\varepsilon_y = -$.
Then $z =  x+y$. Lemma \ref{lem:basis} yields the basis $\mathcal{B} = \{1, x\}$ for the module $\mathstrut_\Rx \A$.
\item Let
 $\A = \kk \langle\!\langle a,b\rangle \!\rangle/(a^2,b^2)$ with polarization $\varepsilon_a = +$ and $\varepsilon_b = -$.
Then $z = ba + ab = x_1 + y_1$.
By 
Lemma \ref{lem:basis} the module $\mathstrut_\Rx \A$ has a basis $\mathcal{B} = \{1, ba, a, b\}$.
\end{enumerate}
\end{ex}

\subsubsection{The Frobenius form of a ribbon graph order}
Using the dual basis fixed above we 
 introduce a special $\Rx$-linear form of a ribbon graph order:
\begin{dfn}\label{dfn:frobenius}
For a ribbon graph order $\A$ and a polarization $\varepsilon$ of $\A$ we set
\begin{align*}
\omega := 
\sum_{i \in Q_0} x_i^\vee=\sum_{a \in Q_1: \, \varepsilon_a = +} c_a^\vee  && \text{and}&&
\phi := \omega^\vee: \begin{td} \A \ar{r} \& \Rx. \end{td}
\end{align*}
The $\Rx$-linear form $\phi$ will be called
the \emph{Frobenius form} of $\A$.
\end{dfn}
This definition is motivated by an analogue for Brauer graph algebras which will be discussed in Remark \ref{rmk:frobenius-form}.

Next, we compute the Frobenius form on compositions of basis elements.
\begin{notation}
For any $a\in Q_1^*$ and $1 \leq m < n(a)$ we 
define a ``derivative cycle''
\begin{itemize}[label=$\blackdiamond$]
\item
$\partial^m c_a = \sigma^m(a)_{n(a)-m} = \sigma^{n(a)-1}(a) \ldots \sigma^{m+1}(a) \, \sigma^m(a)$. Thus, $(\partial^m c_a) \, a_m = c_a$.
\end{itemize}
\end{notation}
\begin{lem}\label{lem:relations}
For any $p,q \in \mathcal{B}$ it holds that $\phi(qp) \neq 0$
if and only if 
$$
(q,p) \in \{ (x_i,x_i), (x_i,e_i), (e_i,x_i) \ | \ i \in Q_0 \}
\cup \{ (\partial^m c_a, a_m) \ | \ a \in Q_1^*, \ 1 \leq m < n(a)\}.
$$ 
\end{lem}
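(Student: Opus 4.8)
The plan is to reduce the assertion to a single criterion on the product path $qp$ and then run through the finitely many types of pairs $(q,p)\in\mathcal{B}\times\mathcal{B}$. First I would record the normal form of an arbitrary path that is already implicit in the proof of Lemma \ref{lem:basis}: a non-cyclic path starting with $a$ equals $t^{r}a_{m}$ with $1\le m<n(a)$, while a cyclic path $c_{a}^{s}$ equals $t^{s-1}x_{i}$ if $\varepsilon_{a}=+$ and $t^{s-1}y_{i}=t^{s}e_{i}-t^{s-1}x_{i}$ if $\varepsilon_{a}=-$; here I use that the central element $z$ acts as $t$, that $z\,c_{a}=c_{a}^{2}$ (the only surviving junction), and that $y_{i}=te_{i}-x_{i}$ because $ze_{i}=x_{i}+y_{i}$. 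Since $\phi$ is $\Rx$-linear and, by Definition \ref{dfn:frobenius}, takes the value $1$ on each $x_{i}$ and vanishes on every $e_{i}$ and every $a_{m}$, these normal forms give
\[
\phi(t^{r}a_{m})=0,\qquad \phi(t^{s-1}x_{i})=t^{s-1},\qquad \phi(t^{s-1}y_{i})=-t^{s-1}.
\]
Hence, for a single path $w$ of $(Q,I)$, one has $\phi(w)\neq 0$ if and only if $w$ is a nonzero power $c_{a}^{s}$ of a cyclic path.

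Next I would translate the lemma into this language. For basis elements $q,p$ the product $qp$ is nonzero in $\A$ exactly when the first arrow of $q$ is the $\sigma$-successor of the last arrow of $p$, and in that case $qp$ is the concatenated path of length $\ell(q)+\ell(p)$; otherwise $qp=0$. Combined with the previous paragraph, the claim becomes: $\phi(qp)\neq 0$ if and only if the concatenation $qp$ is defined and equals some nonzero cycle power $c_{b}^{s}$. It then remains to determine, among all pairs from $\mathcal{B}=\{e_{i},x_{i}\}\cup\{a_{m}\}$, for which ones this happens.

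The case analysis proceeds by the type of $q$ and $p$ (trivial $e_{i}$, cyclic $x_{i}$, or proper $a_{m}$). If one factor is an idempotent $e_{i}$, then $qp$ is the other factor, restricted to paths ending, resp. starting, at $i$; among basis vectors only $x_{i}$ is a cycle power, which yields the pairs $(e_{i},x_{i})$ and $(x_{i},e_{i})$. For $q=x_{i}$, $p=x_{j}$ the product is nonzero only if $i=j$, and then $x_{i}^{2}=t\,x_{i}=c_{a_{i}}^{2}$ is a cycle power, giving $(x_{i},x_{i})$; the mixed products $x_{i}a_{m}$ and $a_{m}x_{j}$, when nonzero, both equal $t\,a_{m}$ and are therefore killed by $\phi$. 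The essential case is $q=a_{m}$, $p=b_{\ell}$ with $a,b\in Q_{1}^{*}$: here $qp\neq 0$ forces $a=\sigma^{\ell}(b)$, whence $n(a)=n(b)=:n$ and $qp=b_{m+\ell}$. Since $2\le m+\ell\le 2n-2$ and $n\ge 2$, the concatenation is a cycle power precisely when $m+\ell=n$, i.e. when $q=(\sigma^{\ell}(b))_{\,n-\ell}=\partial^{\ell}c_{b}$, which is exactly the pair $(\partial^{\ell}c_{b},b_{\ell})$. Collecting the four cases reproduces the asserted list.

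I expect the main obstacle to be bookkeeping rather than anything conceptual. Two points need care: the polarization, and in particular the fact that $y_{i}$ is \emph{not} a basis vector but equals $te_{i}-x_{i}$, so that cycles $c_{a}$ with $\varepsilon_{a}=-$ still satisfy $\phi(c_{a})=-1\neq 0$ (this is what makes the family $(\partial^{m}c_{a},a_{m})$ appear uniformly, independently of $\varepsilon_{a}$); and the length bounds $1\le m<n(a)$, which are exactly what excludes spurious higher cycle powers and is where the hypothesis $a,b\in Q_{1}^{*}$, i.e. $n\ge 2$, enters. Distinguishing reliably which concatenations land on $c_{b}$ versus on $t\,a_{m'}$ is the only delicate step.
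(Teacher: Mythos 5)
Your proof is correct and follows essentially the same route as the paper: both reduce the claim to the observation that $\phi$ vanishes on every path except the powers $c_a^s=t^{s-1}c_a$ of the repetition-free cycles (using the normal forms $a_{rn(a)+m}=t^r a_m$ and $y_i=te_i-x_i$), and then determine which products of basis elements land on such a power. Your case analysis merely spells out the final step that the paper's proof leaves implicit ("this implies the possibilities above"), and it is carried out correctly, including the delicate point that $a_m b_\ell$ is a cycle power exactly when $m+\ell=n$.
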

\begin{proof}
\begin{itemize}[leftmargin=25pt]
\item[$\Leftarrow:$] 
Let $i \in Q_0$.
We note that
$\phi(x_i^2) = \phi(z x_i) =  z \neq 0$, $\phi(x_i) = 1$ and $\phi(y_i) = \phi(z e_i - x_i) = -1$, so $\phi(c_a) = \varepsilon_a \neq 0$ for any arrow $a \in Q_1$.
\item[$\Rightarrow:$]  
Let $p,q \in \mathcal{B}$ such that $\phi(qp)\neq 0$. 
Then 
$qp = a_\ell$ for some arrow $a \in Q_1$ and some number  $1 \leq \ell \leq 2 \, n(a)$. Since $qp$ must be a cyclic path
it holds that $qp = c_a^2$ or $c_a$.
So $qp \in \{x_i^2, x_i, y_i\}$ with $i = s(a)$.
This implies the possibilities above.
\qedhere
\end{itemize}
\end{proof}
The last lemma implies that the Frobenius form $\phi$ is an \emph{$\nuu$-symmetrizing} form:
\begin{lem} \label{lem:nu-symmetry}
For any $p,q \in \A$ it holds that $\phi(qp) = \phi(\nuu(p) q)$.
\end{lem}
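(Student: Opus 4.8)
The plan is to reduce to basis elements and then extend by bilinearity. Both maps $(p,q) \mapsto \phi(qp)$ and $(p,q) \mapsto \phi(\nuu(p)\,q)$ are $\Rx$-bilinear, since $\phi$ is $\Rx$-linear, the multiplication of $\A$ is $\Rx$-bilinear and $\nuu$ is an $\Rx$-algebra morphism; so it suffices to check the identity for $p,q$ ranging over the basis $\mathcal{B}$ from Lemma \ref{lem:basis}. First I would record how $\nuu$ acts on a basis element. For a path $a_m = \sigma^{m-1}(a)\cdots a$ the defining rule $\nuu(b) = \varepsilon_{\sigma(b)}\,\varepsilon_b\,b$ telescopes to $\nuu(a_m) = \varepsilon_{\sigma^m(a)}\,\varepsilon_a\,a_m$, and $\nuu(e_i) = e_i$, $\nuu(x_i) = x_i$ because $\sigma^{n(a)}(a) = a$. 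Hence $\nuu$ scales every basis element $p$ by a sign $\lambda_p$, and the claim becomes $\phi(qp) = \lambda_p\,\phi(pq)$ for all $p,q \in \mathcal{B}$.

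Next I would exploit Lemma \ref{lem:relations}. Let $S$ denote the set of pairs $(q,p)$ with $\phi(qp) \ne 0$. The crucial point is that $S$ is stable under the swap $(q,p) \mapsto (p,q)$: the pairs $(x_i,x_i),(x_i,e_i),(e_i,x_i)$ are obviously stable, and the swap of $(\partial^m c_a,\,a_m)$ is again of the same shape, namely $(\partial^{m'} c_b,\,b_{m'})$ with $b = \sigma^m(a)$ and $m' = n(a)-m$, using $\partial^{m'} c_b = a_m$ and $b_{m'} = \partial^m c_a$. Therefore $\phi(qp)$ vanishes precisely when $\phi(pq)$ does, so the identity holds trivially for pairs outside $S$, and only the finitely many pairs in $S$ remain.

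It then remains to match values on $S$. For the cyclic pairs one has $\lambda_{x_i} = \lambda_{e_i} = 1$ and $\phi(qp) = \phi(pq)$ directly. The decisive case is $(q,p) = (\partial^m c_a,\,a_m)$: here $qp = (\partial^m c_a)\,a_m = c_a$, so $\phi(qp) = \varepsilon_a$ as computed in the proof of Lemma \ref{lem:relations}, while in the opposite order the product is the rotated cycle $pq = a_m\,(\partial^m c_a) = c_{\sigma^m(a)}$, giving $\phi(pq) = \varepsilon_{\sigma^m(a)}$. Since $\lambda_p = \varepsilon_{\sigma^m(a)}\,\varepsilon_a$ and $\varepsilon_{\sigma^m(a)}^2 = 1$, this yields $\lambda_p\,\phi(pq) = \varepsilon_a = \phi(qp)$, as wanted.

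The part needing the most care is the path bookkeeping underlying the two identities $a_m\,(\partial^m c_a) = c_{\sigma^m(a)}$ and the swap-stability of $S$: both hinge on the gentleness of $(Q,I)$ and on $\sigma^{n(a)}(a) = a$, so I would verify explicitly that the junctions of the concatenated paths avoid the ideal $I$. No separate argument is needed in characteristic two, where all signs equal $1$ and $\nuu = \id$ by Remark \ref{rmk:polarization}, so the statement collapses to the symmetry $\phi(qp) = \phi(pq)$ already contained in the computation above.
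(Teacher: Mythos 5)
Your proof is correct and follows essentially the same route as the paper: reduce to the basis $\mathcal{B}$ by $\Rx$-linearity, then evaluate both sides on the pairs singled out by Lemma \ref{lem:relations}, with the sign computation $\varepsilon_{\sigma^m(a)}^2\,\varepsilon_a = \varepsilon_a$ closing the decisive case $(q,p)=(\partial^m c_a, a_m)$ exactly as in the paper. The only (harmless) variation is in the vanishing case: you get it from swap-stability of the support of $(q,p)\mapsto\phi(qp)$ together with the fact that $\nuu$ rescales each basis element by a unit, whereas the paper deduces it from the already-proven direction via $(\nuu)^2=\id$.
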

\begin{proof}
For any arrow $a\in Q_1$ it holds that $\nuu(c_a) = c_a$.
So  $\nuu(z) = z$.
Since the involution $\nuu$ and the form $\phi$ are both $\Rx$-linear,
it is sufficient to show the claim  for any two  elements $p,q \in \mathcal{B}$.
\begin{itemize}
\item Assume that $\phi(qp) \neq 0$. By Lemma \ref{lem:relations} there are only two possibilities: 
\begin{itemize}[label=$\bullet$]
\item If $(q,p)= (x_i, x_i), (x_i,e_i)$ or $(e_i,x_i)$ for some $i \in Q_0$, then   $\nuu(p) q = pq = qp$.
\item If $(q,p)=  (\partial^m c_a,a_m)$ for some arrow $a \in Q_1^*$ and some number $1 \leq m <n(a)$, then  $\nuu(p) q= \varepsilon_{\sigma^m(a)} \, \varepsilon_a \, c_{\sigma^m(a)}$ and $\phi(\nuu(p)q) = 
\varepsilon_{\sigma^m(a)}^2 \, \varepsilon_a = \varepsilon_a = \phi(c_a) = \phi(qp)$.
\end{itemize}
In particular, we have shown that $\phi(\nuu(p) q) = 0$ implies that
$\phi(qp) = 0$.
\end{itemize}

\begin{itemize}
\item Let $\phi(qp)=0$. 
Then $0 
= \phi((\nuu)^2(qp)) = \phi((\nuu)^2(p) \nuu(q)) = \phi(\nuu(p) q)$. \qedhere
\end{itemize}
\end{proof}

The $\A$-bimodule structure of the canonical bimodule $\omegac = \Hom_{\Rx}(\A,\Rx)$ is given as follows. For any form $\gamma \in \omegac$ and any two elements $a,b \in \A$ the forms
$a \cdot \gamma$
 and $\gamma \cdot b$ 
are
 defined via
$(a \cdot \gamma )(x) = \gamma(xa)$
 and $(\gamma\cdot b)(x) = \gamma(bx)$
for any element $x \in \A$.

At last, we prove a refined version of Theorem \ref{thm:A}:
\begin{manualtheorem}{\ref{thm:A}$^\#$}\label{thm:A'}
Let $\varepsilon$ be some polarization of the ribbon graph order $\A$.
 Let $\nuu$ denote its involution and $\phi$
 its Frobenius form.
 \\
Then the map $\vartheta\!: \A_\nuu \to \A^{\vee},  \ p \mapsto p \cdot \phi$ is an isomorphism of $\A$-bimodules.
\end{manualtheorem}
\begin{proof}
Let us recall that
the Frobenius form is given by $\phi
 = \sum_{i \in Q_0} x_i^\vee\!: \A \to \Rx$,
and the
 right $\A$-module structure of $\A_\nuu$ by
$\A_\nuu \times \A \to \A_\nuu$, 
$(p,b) \mapsto 
p \star b := p \,\nuu(b)$.
\begin{enumerate}
\item 
The map $\vartheta$ is left $\A$-linear by definition.
Let $p \in \A$. 
Lemma \ref{lem:nu-symmetry} yields that
$p \cdot \phi
= \phi \cdot \nuu(p)$
and thus $\nuu(p) \cdot \phi = \phi \cdot p$.
For any $x \in \A$ it follows that
$$\vartheta(x \star p) = \vartheta(x \, \nuu(p)) = (x\,\nuu(p)) \cdot \phi
= x \cdot (\phi \cdot p) = (x\cdot \vartheta(1))\cdot p
= \vartheta(x) \cdot p.
$$
This shows that the map $\vartheta$ is a morphism of $\A$-bimodules.
\end{enumerate}
To show that $\vartheta$ is bijective, we first compute $\vartheta(p)$ for all elements $p \in \mathcal{B}$:
\begin{enumerate} \setcounter{enumi}{1}
\item 
We note that
any 
$\Rx$-linear morphism 
 $\psi\!:
\A \to \Rx$ can be written as 
$\psi = \sum_{q \in \mathcal{B}} 
\psi(q) \, q^\vee$.
For any vertex
 $i \in Q_0$ Lemma \ref{lem:relations} implies the following:
\begin{align*}
\vartheta(e_i)
= e_i \cdot \phi 
= 
\sum_{q \in \mathcal{B}} \phi(q e_i) \, q^\vee
= 
x_i^\vee&&
\vartheta(x_i)
= x_i \cdot \phi = 
\sum_{q \in \mathcal{B}} 
\phi(q x_i) \, q^\vee
= 
e_i^\vee + z x_i^\vee.
\end{align*}
For any $a \in Q_1^*$ and any number $1 < m < n(a)$ 
 Lemma \ref{lem:relations} also yields that
\begin{align*}\vartheta(a_m) = a_m \cdot \phi = 
\sum_{q \in \mathcal{B}} 
\phi(q a_m) \cdot q^\vee
= \phi(c_a) (\partial^m c_a)^\vee
=
\varepsilon_a (\partial^m c_a)^\vee.\end{align*}
\item 
Let $\psi: \A^{\vee} \to \A_\nuu$ be the unique $\Rx$-linear map given by
\begin{align*}
x_i^\vee &\mapsto e_i 
\quad \text{and} \quad e_i^\vee \mapsto -y_i = x_i - z e_i
\quad
\text{for any vertex $i\in Q_0$}, \\
a_m^\vee &\mapsto \varepsilon_{\sigma^m(a)} \, 
\partial^m c_a
\quad
\text{for any arrow $a \in Q_1^*$ and any number $1 \leq m < n(a)$}. 
\end{align*} 
It is straightforward to check that $\psi$ is the inverse of the map $\vartheta$. 
\qedhere
\end{enumerate}
\end{proof}

\section{Symmetric ribbon graph orders and Brauer graph algebras}\label{sec:symmetric}

So far, we know that the canonical bimodule of the ribbon graph order $\A$
is given by a bimodule $\A_\nuu$, where the left module structure is the regular one and the right module structure is twisted by some involution $\nuu$. 

The goal of this section is to describe conditions when this twist ``does not matter''. 
In Subsection \ref{subsec:push},
we observe a few statements on
central quotients of some algebras which seem to be interesting on their own.
In the remaining subsections we  collect the implications to  characterize symmetric ribbon graph orders:
\begin{align*}
\begin{tikzcd}[ampersand replacement=\&, cells={outer sep=1pt, inner sep=0pt}, column sep=2.25cm, labels={outer sep=2pt},
]
{\begin{array}{c}
\A \text{ is symmetric}
\end{array}} 
\ar[Rightarrow]{r}{
\text{Subsec.\ref{subsec:twistedBGA}}}
\&
\begin{array}{l}
 \AI = \kk \otimes_{\Rx}\A\\
 \text{is symmetric} 
\end{array}
\ar[Rightarrow, xshift=-0.0cm]{d}[xshift=0.0cm]{\text{Subsec.\ref{subsec:circuits}}} 
\&
\ar[Rightarrow]{l}[swap]{\text{
Subsec.\ref{subsec:twistedBGA}}
} 
\begin{array}{l}
\AI \text{ is some Brauer}
\\
\text{graph algebra}
\end{array}
\\
\begin{array}{l}
\text{$\epsilon^\star = \id$ for some}\\
\text{polarization } \epsilon \text{ of }\A
\end{array}
\ar[Rightarrow]{u}{\text{Theorem  \ref{thm:A}} }[swap]{\text{Subsec.}\ref{subsec:final}}
\&
\begin{array}{l}
 \UG\text{ is bipartite  }\\
\text{or }\chr \kk =2
\end{array}
\ar[Rightarrow]{l}[swap]{\text{Subsec.\ref{subsec:bipartite}}}
\ar[dashed,Rightarrow,
end anchor=west, start anchor= east
]{r}{\text{Subsec.\ref{subsec:bipartite}}}[swap]{
\text{for some fields } \kk
}
\&
\begin{array}{l}
\AI \text{ is BGA }
\\
\text{of certain form}
\ar[Rightarrow]{u}
\end{array}
\end{tikzcd}
\end{align*}
The precise version of the diagram above is stated in Theorem \ref{thm:symmetry}.
In Subsection \ref{subsec:final}
we will also clarify the relationship between certain natural quotients of ribbon graph orders and
Brauer graph algebras.

\subsection{Pushing down the canonical bimodule}
\label{subsec:push}
In this brief subsection, we consider a more general setup than usual:
\begin{itemize}[label=$\blackdiamond$]
\item
Let $\Sx$
be a commutative ring,
 $\mx$ any maximal ideal of $\Sx$
and $\A$ be an $\Sx$-algebra  
 such that $\mathstrut_\Sx \A$ is a free $\Sx$-module of finite rank.
\end{itemize}
For example, $\Sx$ might be a regular local ring and $\A$ an $\Sx$-order.
\begin{itemize}[label=$\blackdiamond$]
\item 
We set  $\A^{\vee} = \Hom_{\Sx}(\A,\Sx)$,
$\ \kk = \Sx/\mx$, $\ \AI = \A/\mx \A$ 
 and $\ \omegad = \Hom_{\kk}(\AI,\kk)$.
\end{itemize}
Let us note that
there is an isomorphism of $\kk$-algebras $\AI \cong \kk \otimes_{\Rx} \A$ and that $\mx \A = \A \mx$ is a two-sided ideal of $\A$.

The canonical bimodule $\omegad$ is a quotient of the canonical bimodule $\A^{\vee}$:
\begin{lem}\label{lem:bimodule-quotient}
 In the setup above,
 there is an isomorphism 
of $(\AI, \A)$-bimodules
$$
\begin{td}
\phi\!:
\AI \otimes_{\A} 
\A^{\vee} \ar{r} \& 
\omegad
, \& \ol{a} \otimes_{\Sx} f \ar[mapsto]{r} \& \ol{a f},
\end{td}
$$
where $\ol{af}$ maps an element
$x + \mx \A \in \AI$ to the scalar $f(x\,a) + \mx \in \kk$ for any $x \in \A$.
\end{lem}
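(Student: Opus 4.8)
The plan is to build $\phi$ from the $\Sx$-bilinear assignment $\AI \times \A^{\vee} \to \omegad$ sending $(\ol a, f)$ to the functional $\ol{af}$, to check this assignment is $\A$-balanced so that it descends to $\AI \otimes_\A \A^{\vee}$, then to verify that the resulting $\phi$ respects both module structures, and finally to prove bijectivity by recognising $\phi$ as a composite of base-change isomorphisms. The hypothesis on $\mathstrut_\Sx \A$ enters only in the last step, which is where the substance of the proof lies.

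First I would check that $\ol{af}$ is a well-defined element of $\omegad = \Hom_\kk(\AI,\kk)$: since $f$ is $\Sx$-linear and $\mx$ is central, $f(\mx\A) \subseteq \mx f(\A) \subseteq \mx$, so $x \mapsto f(xa)+\mx$ annihilates $\mx\A$ and descends to a $\kk$-linear form on $\AI$. The assignment $(\ol a,f)\mapsto \ol{af}$ is additive in each slot and $\Sx$-bilinear, and it is $\A$-balanced: for $b \in \A$ both $\ol{(ab)f}$ and $\ol{a(b\cdot f)}$ send $\ol x$ to $f(xab)+\mx$, using the left $\A$-action $(b\cdot f)(y)=f(yb)$ on $\A^{\vee}$ recalled before the statement. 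Hence the assignment factors through $\AI\otimes_\A \A^{\vee}$ and defines $\phi$.

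Next I would confirm that $\phi$ is a morphism of $(\AI,\A)$-bimodules. Here $\omegad$ carries its $\AI$-bimodule structure, viewed as an $(\AI,\A)$-bimodule through the projection $\A \to \AI$, so that $(\ol c \cdot g)(\ol x) = g(\ol{xc})$ and $(g\cdot b)(\ol x)=g(\ol{bx})$. A direct computation shows that $\phi(\ol c\cdot(\ol a\otimes f))$ and $\ol c\cdot\phi(\ol a\otimes f)$ both send $\ol x$ to $f(xca)+\mx$, and that on the right both $\phi((\ol a\otimes f)\cdot b)$ and $\phi(\ol a\otimes f)\cdot b$ send $\ol x$ to $f(bxa)+\mx$, using the left and right $\A$-actions $(a\cdot\gamma)(x)=\gamma(xa)$ and $(\gamma\cdot b)(x)=\gamma(bx)$ on $\A^{\vee}$. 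Thus $\phi$ is a bimodule homomorphism.

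The main step is bijectivity, and this is where I expect the only real obstacle. I would identify $\phi$ with the composite of natural maps
\[
\AI \otimes_\A \A^{\vee} \;\cong\; \A^{\vee}/\mx\A^{\vee} \;\cong\; \kk \otimes_\Sx \A^{\vee} \;\xrightarrow{\sim}\; \Hom_\kk(\kk\otimes_\Sx\A,\,\kk) = \omegad,
\]
where the first isomorphism is the standard $(\A/\mx\A)\otimes_\A \A^{\vee}\cong \A^{\vee}/(\mx\A)\A^{\vee}$ combined with $(\mx\A)\A^{\vee}=\mx\A^{\vee}$ (using $1\cdot f = f$ and centrality of $\mx$), the second is the usual identification of $\A^{\vee}/\mx\A^{\vee}$ with $\kk\otimes_\Sx\A^{\vee}$, and the last is the canonical base-change map $\kk\otimes_\Sx\Hom_\Sx(\A,\Sx)\to\Hom_\kk(\kk\otimes_\Sx\A,\kk)$, which is an isomorphism precisely because $\mathstrut_\Sx\A$ is free of finite rank. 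Tracing $\ol a\otimes f$ through the composite returns $\ol x\mapsto f(xa)+\mx$, i.e.\ exactly $\phi(\ol a\otimes f)$, so $\phi$ is an isomorphism. The obstacle is this last identification: one must know the base-change map for $\Hom_\Sx(\A,-)$ is bijective, which is exactly where finiteness and freeness of $\mathstrut_\Sx\A$ are indispensable. Alternatively I could pick an $\Sx$-basis of $\A$ with dual basis of $\A^{\vee}$, write the inverse of $\phi$ explicitly, and conclude from a dimension count, both $\kk$-vector spaces having dimension $\rk_\Sx\A$.
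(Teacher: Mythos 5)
Your proof is correct, and up to the final step it coincides with the paper's: both verify directly that $\phi$ is well defined and $(\AI,\A)$-bilinear, and both reduce bijectivity to the freeness of $\mathstrut_\Sx\A$. The difference lies in how bijectivity is closed out. The paper first proves surjectivity, by observing that the composite $\A^{\vee}\twoheadrightarrow \AI\otimes_\A\A^{\vee}\xrightarrow{\phi}\omegad$, $f\mapsto\ol{f}$, is onto (every $\kk$-form on $\AI$ lifts along an $\Sx$-basis), and then concludes with the dimension count $\dim_\kk(\AI\otimes_\A\A^{\vee})=\dim_\kk(\kk\otimes_\Sx\A^{\vee})=\rk_\Sx\A^{\vee}=\rk_\Sx\A=\dim_\kk\AI=\dim_\kk\omegad$ --- essentially the ``alternative'' you sketch in your last sentence. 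Your primary route instead exhibits $\phi$ as the composite $\AI\otimes_\A\A^{\vee}\cong\A^{\vee}/\mx\A^{\vee}\cong\kk\otimes_\Sx\A^{\vee}\xrightarrow{\sim}\Hom_\kk(\kk\otimes_\Sx\A,\kk)$ and invokes the base-change isomorphism for $\Hom_\Sx(\A,-)$ against a free module of finite rank. This buys you injectivity and surjectivity in one stroke and makes the role of the freeness hypothesis completely explicit, at the cost of having to trace $\ol{a}\otimes f$ through the identifications (which you do correctly, via $\ol{a}\otimes f=\ol{1}\otimes(a\cdot f)$). Either argument is complete; yours is arguably the more transparent of the two.
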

\begin{proof} 
It is straightforward to verify that the map $\phi$ is left $\AI$-linear and right $\A$-linear.
Since the functor $\AI \otimes_{\A} \mathstrut_{\text{\textemdash}}:
\begin{td} \md \A \ar{r} \& \md \AI \end{td}$ is full, 
the composition
 $$ \begin{td} 
\A^{\vee}
 \ar[->>]{r} \& \AI \otimes_{\A} 
\A^{\vee}
 \ar{r}{\phi} \& 
\omegad,
\& f \ar[mapsto]{r}\& \ol{1} \otimes f \ar[mapsto]{r} \& \ol{f}
\end{td}
$$
is surjective. Thus, the map $\phi$ is also surjective.
The $\kk$-linear map $\phi$ is also bijective, since
 $\dim_{\kk} (\AI \otimes_{\A} \A^{\vee}) =
\dim_{\kk} (\kk \otimes_{\Sx} \A^{\vee})
= 
\rk_{\Sx} \A^{\vee} = \rk_{\Sx} \A = \dim_{\kk} \AI = \dim_{\kk} \omegad$. 
\end{proof}
Symmetricity \emph{ascends} along the  projection $\A \twoheadrightarrow \AI$ in the sense of \ref{prp:bimodule-quotient2} below:
\begin{prp} \label{prp:bimodule-quotient}
In the setup above, the following statements hold:
\begin{enumerate}
\item \label{prp:bimodule-quotient1}
There is an isomorphism of $\AI$-bimodules
$
\AI \otimes_{\A} \A^{\vee} \otimes_{\A} \AI \cong
\omegad.
$
\item \label{prp:bimodule-quotient2}
If the $\Sx$-algebra $\A$ is symmetric, then the $\kk$-algebra $\AI$ is symmetric.
\end{enumerate}
\end{prp}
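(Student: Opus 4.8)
The plan is to treat part \ref{prp:bimodule-quotient1} as the crux and to deduce part \ref{prp:bimodule-quotient2} from it by a purely formal argument. The starting point is Lemma \ref{lem:bimodule-quotient}, which already supplies an isomorphism $\AI \otimes_{\A} \A^{\vee} \cong \omegad$ of $(\AI,\A)$-bimodules. To establish part \ref{prp:bimodule-quotient1} it then suffices to show that tensoring this on the right with $\AI$ over $\A$ changes nothing, i.e.\ that the extra factor of $\AI$ is redundant once the right $\A$-action has been made to factor through $\AI$.

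For part \ref{prp:bimodule-quotient1}, the key observation I would isolate is the following general fact: for any right $\AI$-module $M$, regarded as a right $\A$-module by restriction along the projection $\A \twoheadrightarrow \AI$, the natural map $M \otimes_{\A} \AI \to M$, $m \otimes \ol{a} \mapsto m\,\ol{a}$, is an isomorphism, with inverse $m \mapsto m \otimes \ol{1}$. This holds because $\mx\A = \A\mx$ is a two-sided ideal (as already noted), so that $M \cdot \mx\A = 0$ and hence $M \otimes_{\A} \AI \cong M/(M \cdot \mx\A) = M$. Applying this with $M = \omegad$ --- which is an $\AI$-bimodule, so its right $\A$-action indeed factors through $\AI$ --- yields $\omegad \otimes_{\A} \AI \cong \omegad$ as $\AI$-bimodules. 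Tensoring the isomorphism of Lemma \ref{lem:bimodule-quotient} on the right with $\AI$ over $\A$ and composing then gives the chain $\AI \otimes_{\A} \A^{\vee} \otimes_{\A} \AI \cong \omegad \otimes_{\A} \AI \cong \omegad$, which is precisely part \ref{prp:bimodule-quotient1}.

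For part \ref{prp:bimodule-quotient2} I would apply the functor $\AI \otimes_{\A} (\mathstrut_{\text{\textemdash}}) \otimes_{\A} \AI$ to a fixed isomorphism $\A \cong \A^{\vee}$ of $\A$-bimodules that witnesses the symmetry of $\A$. By part \ref{prp:bimodule-quotient1} the right-hand side becomes $\omegad$. The left-hand side collapses: using $\A \otimes_{\A} \AI \cong \AI$ together with the identity $\AI \otimes_{\A} \AI \cong \AI$ (again an instance of the redundancy fact above, valid since $\mx\A$ is two-sided), one obtains $\AI \otimes_{\A} \A \otimes_{\A} \AI \cong \AI$ as $\AI$-bimodules. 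Hence $\AI \cong \omegad$ as $\AI$-bimodules, which is the statement that the $\kk$-algebra $\AI$ is symmetric.

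The main thing to watch --- rather than a genuine obstacle --- is the bookkeeping of the two-sided structure: one must check that each collapsing isomorphism respects the \emph{full} $\AI$-bimodule structure and not merely one of the two actions, and that the right $\A$-action on $\omegad$ really does factor through $\AI$. Both reduce to the single observation that $\mx\A = \A\mx$ is a two-sided ideal, so I expect no difficulty beyond verifying the naturality of the maps involved.
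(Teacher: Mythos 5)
Your proposal is correct and follows essentially the same route as the paper: tensor the isomorphism of Lemma \ref{lem:bimodule-quotient} on the right with $\AI$ over $\A$, collapse $\omegad \otimes_{\A} \AI \cong \omegad$ using that the right action already factors through $\AI$, and deduce part (b) by applying the functor $\AI \otimes_{\A} (\mathstrut_{\text{\textemdash}}) \otimes_{\A} \AI$ to a bimodule isomorphism $\A \cong \A^{\vee}$. The paper's own proof is just a terser version of exactly this argument.
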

\begin{proof}
By Lemma \ref{lem:bimodule-quotient} there is an isomorphism
$\AI \otimes_{\A} \A^{\vee} \otimes_{\A} \AI \cong \omegad \otimes_{\A} \AI \cong   \omegad$ of $\AI$-bimodules which shows the first claim. The first claim implies the second.
\end{proof}
In the next subsection, we will apply these results to ribbon graph orders.
\subsection{Twisted Brauer graph algebras} \label{subsec:twistedBGA}

Throughout this subsection, we return to our original setup and extend it as follows:

\begin{notation}\label{not:BGA}
Let $\A$ be a ribbon graph order.
\begin{itemize}[label=$\blackdiamond$]
\item  Let $(Q,I)$ denote a complete gentle quiver of $\A$ and $\UG_0$ the set of its $\sigma$-orbits. For any arrow $a \in Q_1$ its $\sigma$-orbit   is denoted by $v_a$ as before.
\item Let $m\!: \UG_0 \to \N^+$ be some map.
 For any arrow $a \in Q_1$ we set $m_a = m(v_a)$.
\item
We view the ring $\A$ as an $\Rx$-order via the map
 $\Rx \to \A$, $t \mapsto z = \sum_{a\in Q_1} c_a^{m_a}$ from Proposition \ref{prp:RGO}. We define a quotient ring $\AI = \A/ z \A$.
%
%
%
%
%
%
%
%
%
%
%
%
%
%
%
%
%
\item
Let
$\varepsilon$ be any polarization of $\A$.
Using the map $m$ above we define 
another element $w = \sum_{a \in Q_1} \varepsilon_a^{\mathstrut}\, c_a^{m_a}$
and another quotient ring $\BI = \A/\A w \A$.
\end{itemize}
We will write $m \equiv 1$ if $m(v) = 1$ for any $v \in \UG_0$.
\end{notation}
\begin{ex}
Let $\A = \kk \langle \! \langle a,b \rangle \! \rangle/(a^2,b^2)$ and $m \equiv 1$.
Then $\AI = \A/(ba+ab)$ 
is an anti-commutative algebra,
while
$\BI \cong \kk \llbracket a,b \rrbracket / (a^2,b^2)$
is commutative.
\end{ex}
\begin{lem}\label{lem:BGA-relations}
The quotient rings $\AI$ 
and $\BI$ from Notation
\ref{not:BGA}
are Morita equivalent to the 
finite-dimensional
 path algebras $\kk Q/(I+J_+)$
respectively $\kk Q/ (I+J_-)$
where 
$$
J_\pm = 
 \left\langle  c_{a\mathstrut}^{m_a} \pm  c_b^{m_b} \ \big| \ a,b \in Q_1\!: s(a) = s(b)
\right\rangle_{\kk Q}
$$
In particular, the algebra $\BI$ does not depend on the choice of polarization $\varepsilon$
and the socle quotients
$\AI/\soc \AI$ and $\BI/\soc \BI$ are Morita equivalent.
\end{lem}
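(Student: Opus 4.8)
The plan is to realise both $\AI$ and $\BI$ as quotients of the completed path algebra $\A=\widehat{\kk Q/I}$ by explicit two-sided ideals, and then to identify those ideals with $J_+$ and $J_-$. First I would record the local shape of the two elements. At a vertex $i\in Q_0$ with its two arrows $a,b$ one has $z e_i=c_a^{m_a}+c_b^{m_b}$, and since the polarization forces $\varepsilon_a\neq\varepsilon_b$ also $w e_i=c_a^{m_a}-c_b^{m_b}$ up to an overall sign. Both $z$ and $w$ commute with every idempotent $e_i$ (in each case $e_iw=we_i$ simply selects the cyclic paths based at $i$), so the two-sided ideals they generate are $z\A=\langle\,c_a^{m_a}+c_b^{m_b}\mid i\in Q_0\,\rangle$ and $\A w\A=\langle\,c_a^{m_a}-c_b^{m_b}\mid i\in Q_0\,\rangle$. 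These are exactly the generators of $J_+$ respectively $J_-$, so the canonical map $\kk Q/I\to\A$ carries $J_\pm$ into $z\A$ respectively $\A w\A$ and induces algebra maps $\kk Q/(I+J_+)\to\AI$ and $\kk Q/(I+J_-)\to\BI$.

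The key step is to see that these maps are isomorphisms, and the heart of it is a finiteness statement. For the two distinct cycles at $i$ the gentle relations give $c_a c_b=c_b c_a=0$: the junction arrow is $a\cdot\sigma^{n(b)-1}(b)$, which lies in $I$ because $a\neq b$, and the same holds at a leaf vertex where one of the cycles is a loop. Hence the defining relation $c_a^{m_a}=\mp c_b^{m_b}$ of $J_\pm$ yields $c_a^{m_a+1}=\mp c_b^{m_b}\,c_a=0$, so every cyclic path is nilpotent and $\kk Q/(I+J_\pm)$ has nilpotent radical, in particular is finite-dimensional. Being finite-dimensional, $\kk Q/(I+J_\pm)=(\kk Q/I)/J_\pm$ is already complete for the arrow-ideal topology; since $\A$ is the arrow-ideal completion of $\kk Q/I$ and completion is compatible with passage to a quotient ring whose radical is nilpotent, the canonical map is an isomorphism $\kk Q/(I+J_+)\xrightarrow{\ \sim\ }\A/z\A=\AI$, and likewise $\kk Q/(I+J_-)\xrightarrow{\ \sim\ }\A/\A w\A=\BI$ (with ``Morita equivalent'' reducing to ``isomorphic'' under the standing basicness assumption). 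I expect this completion bookkeeping --- verifying that no infinite tails survive, i.e. that the finite-dimensional path algebra already equals the quotient of the completion, together with the leaf cases in $c_ac_b=0$ --- to be the main obstacle; the rest is formal.

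Finally the two supplementary assertions. The generator $c_a^{m_a}-c_b^{m_b}$ of $J_-$ is antisymmetric under exchanging the two arrows at $i$, so $J_-$ is insensitive to which arrow carries which sign; hence $\BI\cong\kk Q/(I+J_-)$ does not depend on the choice of polarization $\varepsilon$. For the socle quotients I would first identify $\soc\AI$ with the span of the classes of the top cycles $c_a^{m_a}$: each such class is annihilated by $\rad$ on both sides, the only product that is not visibly zero being $a\,c_a^{m_a}=\mp a\,c_b^{m_b}=0$, and these classes (one per vertex, since $c_a^{m_a}=\mp c_b^{m_b}$) span a one-dimensional socle in each indecomposable projective. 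Passing to the socle quotient kills all $c_a^{m_a}$, so the relations $c_a^{m_a}\pm c_b^{m_b}$ become vacuous and
\[
\AI/\soc\AI\;\cong\;\kk Q/\bigl(I+\langle\,c_a^{m_a}\mid a\in Q_1\,\rangle\bigr)\;\cong\;\BI/\soc\BI,
\]
which in particular are Morita equivalent, completing the argument.
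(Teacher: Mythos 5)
Your proposal is correct and follows essentially the same route as the paper: the paper's entire proof is the observation that $a\,c_a^{m_a} = \mp\, a\,c_b^{m_b} \in I$ (your $c_ac_b=0$, hence $c_a^{m_a+1}=0$) forces $\kk Q/(I+J_\pm)$ to be finite-dimensional, after which "the other claims follow directly." You merely spell out the completion bookkeeping and the socle computation that the paper leaves implicit, and both of those are handled correctly.
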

\begin{proof}
For any $a \in Q_1$ it holds that $
a_{n(a) \, m_a + 1} = a \, c_a^{m_a} = \mp a\, c_b^{m_b} \in I$. So each algebra $\kk Q/(I+J_\pm)$ has finite dimension.
The other claims follow directly.
\end{proof}
\begin{rmk}
In the notation above, both ideals $I + J_\pm$ are not admissible
if there is an arrow $a \in Q_1$ such that $c_a = a$ and $m_a = 1$.
\end{rmk}

We will be interested mainly in the quotients 
 with  anti-commutativity relations:
\begin{dfn}\label{dfn:twisted}
A finite-dimensional $\kk$-algebra $\AI$ will be called a \emph{twisted Brauer graph algebra}
if 
there 
is a complete gentle quiver $(Q,I)$ 
and a map $m: \UG_0 \to \N^+$ on its $\sigma$-orbits
such that $\AI$ is Morita equivalent to 
the path algebra $\kk Q /(I + J_+).$
\end{dfn}

The terminology above will be motivated at the end of the present subsection.

Replacing the ideal $J_+$ by $J_-$
in Definition \ref{dfn:twisted}
yields the  definition of a usual \emph{Brauer graph algebra}.
We will describe when a Brauer graph algebra is isomorphic to its twisted version in Subsection \ref{subsec:bipartite}. 
The last statement of Lemma \ref{lem:BGA-relations} shows that any Brauer graph algebra is a \emph{socle deformation} of its twisted version.

The next property of Brauer graph algebras is well-known:
\begin{thm} \label{thm:BGA}
Any Brauer graph algebra $\BI$ is symmetric.
\end{thm}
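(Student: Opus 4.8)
The plan is to exhibit an explicit symmetrizing $\kk$-linear form on $\BI$ and verify its two defining properties. By Lemma~\ref{lem:BGA-relations} the algebra $\BI \cong \kk Q/(I + J_-)$ is finite-dimensional, and a $\kk$-basis is given by the residue classes of the idempotents $e_i$ together with the initial segments $a_m$ of the cyclic powers, subject to the identifications $c_a^{m_a} = c_b^{m_b}$ whenever $s(a) = s(b)$. For each vertex $i \in Q_0$ I write $z_i$ for this common element $c_a^{m_a}$ with $s(a) = i$; these elements span the socle, so that $\soc \BI = \bigoplus_{i \in Q_0} \kk\, z_i$. First I would define $\psi\colon \BI \to \kk$ to be the $\kk$-linear form that sends each socle generator $z_i$ to $1$ and every other basis element to $0$, that is, $\psi = \sum_{i \in Q_0} z_i^\vee$. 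This is the Brauer graph analogue of the Frobenius form $\phi$ of Definition~\ref{dfn:frobenius}.

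Next I would check that $\psi$ is symmetrizing, namely that $\psi(uv) = \psi(vu)$ for all $u,v \in \BI$ and that $\ker\psi$ contains no nonzero left ideal. The non-degeneracy is immediate from our standing assumption that $\A$, and hence $\BI$, is basic: the socle is then multiplicity-free, so every nonzero left ideal contains some socle generator $z_i$, on which $\psi$ takes the value $1$. For the trace property it suffices, by $\kk$-linearity, to treat basis paths $u = p$ and $v = q$. Here $\psi(pq) \neq 0$ precisely when the product $pq$ reduces to a socle generator $z_i$, which happens exactly when $p$ and $q$ concatenate to a full cyclic power $c_a^{m_a}$ with $s(a) = i$. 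In that situation $qp$ is the cyclic rotation of $pq$: it is again a cyclic path of the same total length $n(a)\,m_a$ but based at the vertex $j = s(q)$, hence reduces to $z_j$, and $\psi(qp) = 1 = \psi(pq)$. If instead $pq$ is not a socle element, then neither is $qp$ by the same rotation argument, and both sides vanish.

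The main obstacle will be to make this cyclic-rotation step fully rigorous: one has to verify that whenever $pq = c_a^{m_a}$ the rotated product $qp$ really closes up to the full power $c_{a'}^{m_{a'}}$ at $s(q)$, rather than to a shorter path or to zero, and that the value of $\psi$ does not depend on which of the two arrows at a vertex is used to name the socle. The latter is exactly where the relations $J_- = \langle c_a^{m_a} - c_b^{m_b}\rangle$ enter, since $pq$ and $qp$ generally begin with different arrows at the same vertex; the identification guarantees that $\psi$ measures both consistently. I would carry out this bookkeeping using the permutation $\sigma$ and the cyclic paths $c_a$ of Notation~\ref{not:arrow}, handling separately the degenerate leaves where $c_a = a$. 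As a consistency check I would also note the conceptual link with Section~\ref{sec:canonical}: the signs $\varepsilon_a$ built into $w = \sum_{a \in Q_1} \varepsilon_a\, c_a^{m_a}$ are precisely those occurring in the twist $\nuu$, so that passing to the quotient $\BI = \A/\A w\A$ is what untwists the $\nuu$-symmetrizing form $\phi$ of Lemma~\ref{lem:nu-symmetry} into the honest symmetrizing form $\psi$ above, in the spirit of Proposition~\ref{prp:bimodule-quotient}.
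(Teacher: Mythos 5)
Your proposal is correct and follows essentially the same route as the paper, which (quoting Schroll's argument) takes the form $\phi={\ol{\omega}}^*$ dual to $\ol{\omega}=\sum_{a\in Q_1:\varepsilon_a=+}\ol{c_a}^{m_a}=\sum_{i\in Q_0}z_i$ — exactly your $\psi=\sum_i z_i^\vee$ — verifies that it is symmetrizing with no nonzero left ideal in its kernel, and then invokes the standard criterion from Skowro\'nski--Yamagata. Your write-up simply makes explicit the cyclic-rotation and socle bookkeeping that the paper compresses into ``it can be verified that''.
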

%
%
%
The following
proof 
is due to Schroll
\cite{Schroll18}*{Proof of Theorem 2.6}:
\begin{proof}
We use Notation \ref{not:BGA}. It can be verified that the element $\ol{\omega} = \sum_{a \in Q_1: \varepsilon_a = +}
{\ol{c_a}}^{m_a}$ of $\BI$ yields
a symmetrizing $\kk$-linear form $\phi={\ol{\omega}}^* \in \BI^*$
such that $\phi(I) \neq 0$ for any non-zero left ideal $I$ of $\BI$.
Then the form $\phi$ induces
an isomorphism of $\BI$-bimodules
 $\BI \to \BI^*$ via $x \mapsto x \cdot \phi$ 
 \cite{Skowronski-Yamagata}*{Chapter IV, Theorem 2.2}.
\end{proof}
\begin{rmk}
\label{rmk:frobenius-form}
Let $m \equiv 1$. Then $\A$ is an $\Rx$-order via the map $\Rx \to \A$, $t \mapsto \sum_{a \in Q_1} c_a$.\\
On the one hand, the Frobenius form $\phi = \omega^\vee$ of the ribbon graph order $\A$ from Definition \ref{dfn:frobenius} 
is essentially the same as
the symmetrizing Frobenius form ${\ol{\omega}}^*$ of the Brauer graph algebra $\BI=\A/\A w\A$ from the last proof. 
\\
On the other hand, we will see that the ribbon graph order $\A$ may be not symmetric. 
In this case, 
 the map
$\phi: \Rx \to \, \A$, $t\mapsto w$
does not factor through the center of $\A$.
\end{rmk}
In contrast to Brauer graph algebras, their twisted versions 
fit into the framework
 of the previous subsection:
\begin{lem}\label{lem:algebra-structure}
Any twisted Brauer graph 
$\kk$-algebra $\AI$
is isomorphic to the quotient algebra $\kk \otimes_{\Rx} \A$
for some ribbon graph order $\A$.
\end{lem}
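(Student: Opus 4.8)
The plan is to read off the defining data of the twisted Brauer graph algebra and run the construction of Subsection~\ref{subsec:push} backwards. By Definition~\ref{dfn:twisted} there is a complete gentle quiver $(Q,I)$ and a map $m\colon \UG_0 \to \N^+$ such that $\AI$ is Morita equivalent to $\kk Q/(I+J_+)$. First I would let $\A_0$ be the arrow ideal completion of the path algebra $\kk Q/I$; this is a ribbon graph order by Definition~\ref{dfn:RGO}, and by Proposition~\ref{prp:RGO} the assignment $t \mapsto z = \sum_{a \in Q_1} c_a^{m_a}$ makes it an $\Rx$-order. With this structure the two-sided ideal $\mx \A_0$ coincides with $z\A_0$, so the general observation of Subsection~\ref{subsec:push} identifies $\kk \otimes_{\Rx} \A_0 \cong \A_0/z\A_0$. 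By Lemma~\ref{lem:BGA-relations} this quotient is Morita equivalent to $\kk Q/(I+J_+)$, hence to the given algebra $\AI$.

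The remaining task is to upgrade this chain of Morita equivalences to a genuine isomorphism $\kk \otimes_{\Rx} \A \cong \AI$, and the point is that Definition~\ref{dfn:RGO} permits me to choose the ribbon graph order $\A$ anywhere in the Morita class of $\A_0$. Writing $\AI \cong \End_{B}(P)$ for a progenerator $P = \bigoplus_i (B e_i)^{\oplus n_i}$ over $B = \kk Q/(I+J_+) \cong \A_0/z\A_0$, I would lift it to the projective $\A_0$-module $\wt P = \bigoplus_i (\A_0 e_i)^{\oplus n_i}$ and set $\A = \End_{\A_0}(\wt P)$. As the endomorphism ring of a progenerator this $\A$ is Morita equivalent to $\A_0$, hence again a ribbon graph order, and since $\wt P$ is a finitely generated projective $\A_0$-module (so free of finite rank over $\Rx$), base change along $\Rx \twoheadrightarrow \kk$ commutes with the formation of its $\Hom$-spaces. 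This gives
\[
\kk \otimes_{\Rx} \A \;=\; \kk \otimes_{\Rx} \End_{\A_0}(\wt P) \;\cong\; \End_{\A_0/z\A_0}\!\big(\kk \otimes_{\Rx} \wt P\big) \;\cong\; \End_{B}(P) \;\cong\; \AI ,
\]
using $\kk \otimes_{\Rx} \wt P \cong P$, which is the desired isomorphism.

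The hard part will be the Morita-versus-isomorphism bookkeeping, and concretely the one non-formal step is the base-change commutation $\kk \otimes_{\Rx} \End_{\A_0}(\wt P) \xrightarrow{\ \sim\ } \End_{B}(\kk \otimes_{\Rx} \wt P)$; for $\wt P$ a finite sum of the summands $\A_0 e_i$ of $\A_0$ this reduces to the $\Rx$-freeness of $\A_0$ and is routine. The non-admissible edge case flagged in the remark after Lemma~\ref{lem:BGA-relations} (a leaf arrow with $c_a = a$ and $m_a = 1$) is not an obstacle here, since it is already absorbed into the relation computation of that lemma and $\A_0/z\A_0$ remains a finite-dimensional quotient of $\A_0$. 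Finally, if one insists on the running convention that every order is basic, then $\AI$ is basic, the progenerator $P$ is just $\A_0/z\A_0$ itself, and the construction collapses to $\A = \A_0$, so that the isomorphism $\kk \otimes_{\Rx} \A_0 \cong \AI$ follows directly from Lemma~\ref{lem:BGA-relations}.
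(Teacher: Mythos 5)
Your proposal is correct and follows essentially the same route as the paper: read off $(Q,I)$ and $m$ from Definition~\ref{dfn:twisted}, endow the completed path algebra with the $\Rx$-structure $t \mapsto z$ from Proposition~\ref{prp:RGO}, and identify $\kk \otimes_{\Rx} \A \cong \A/z\A$ with $\AI$ via Lemma~\ref{lem:BGA-relations}. The paper simply invokes its running convention that orders are basic where you instead lift a progenerator to upgrade the Morita equivalence to an isomorphism; that extra step is sound (and arguably more honest about the non-basic case) but does not change the argument.
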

\begin{proof}
The ring
$\AI$ is isomorphic to the quotient of some ribbon graph order $\A$
by Definition \ref{dfn:twisted} with respect to some multiplicity map $m$.
By Proposition \ref{prp:RGO},
the  map $m$
yields an $\Rx$-algebra morphism $
\Rx \to \A$, $t \mapsto z=\sum_{a \in Q_1} c_a^{m_a}$.
This implies that $\AI \cong \A/z \A \cong \kk \otimes_{\Rx} \A$.
\end{proof}

Next, we obtain an analogue of Theorem \ref{thm:A}
for twisted Brauer graph algebras and the first implication in our characterization of symmetric ribbon graph orders:
\begin{thm} \label{thm:twisted-BGA}
Let $\A$ be a ribbon graph order and $m: \UG_0 \to \N^+$ be a map on its $\sigma$-orbits and $\varepsilon$ be any polarization of $\A$.
Let $\AI$ denote the corresponding twisted Brauer graph algebra from Notation
\ref{not:BGA}.
Then the following statements hold:
\begin{enumerate}
\item \label{thm:twisted-BGA2}
The map $
\begin{td} \olnuu\!: \AI \ar{r} \& \AI \end{td}, \begin{td}
{\ol{x}} \ar[mapsto]{r} \& {\ol{\nuu(x)}} 
\end{td}$
is a well-defined $\kk$-algebra involution.\\
%
%
%
%
%
%
%
%
%
Moreover,
there is an isomorphism
$\omegad \cong \AI_{\scriptsize \olnuu}$ of $\AI$-bimodules.
\item \label{thm:twisted-BGA3}
If the ribbon graph order $\A$ is symmetric
or if the $\kk$-algebra $\AI$ is isomorphic to some Brauer graph algebra, then the $\kk$-algebra
$\AI$ is symmetric.
\end{enumerate}

\end{thm}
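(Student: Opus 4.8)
The plan is to obtain part~\ref{thm:twisted-BGA2} by \emph{pushing down} the bimodule isomorphism of Theorem~\ref{thm:A'} along the projection $\A \twoheadrightarrow \AI$ with the machinery of Subsection~\ref{subsec:push}, and then to read off part~\ref{thm:twisted-BGA3} from Proposition~\ref{prp:bimodule-quotient}~\ref{prp:bimodule-quotient2} together with Theorem~\ref{thm:BGA}.

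First I would settle the well-definedness of $\olnuu$. Since $\nuu$ is an $\Rx$-algebra morphism and $\nuu(c_a) = c_a$ for every arrow $a$ (as recorded in the proof of Lemma~\ref{lem:nu-symmetry}), one gets $\nuu\big(\sum_{a} c_a^{m_a}\big) = z$. As $z$ is central, $z\A$ is a two-sided ideal and $\nuu(z\A) = z\A$; hence $\nuu$ descends to a $\kk$-algebra map $\olnuu\colon \AI \to \AI$. Because $\nuu$ is an involution fixing each $e_i$, so is $\olnuu$.

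For the bimodule statement in~\ref{thm:twisted-BGA2} I would start from the isomorphism $\omegad \cong \AI \oA \omegac \oA \AI$ of $\AI$-bimodules provided by Proposition~\ref{prp:bimodule-quotient}~\ref{prp:bimodule-quotient1}, and substitute the identification $\omegac \cong \A_\nuu$ of Theorem~\ref{thm:A'} to obtain $\omegad \cong \AI \oA \A_\nuu \oA \AI$. It then remains to compute this triple tensor product: tensoring $\A_\nuu$ on the left with $\AI$ collapses the left factor, since the left structure of $\A_\nuu$ is regular, and leaves $\AI$ with right $\A$-action $\ol x \cdot b = \ol{x\,\nuu(b)}$; tensoring on the right with $\AI$ then yields $\AI$ with right action $\ol x \star \ol c = \ol{x\,\nuu(c)}$, which is exactly the twisted bimodule $\AI_{\olnuu}$. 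The assignment $\ol a \otimes p \otimes \ol c \mapsto \ol{a\,p\,\nuu(c)}$ realizes this isomorphism; verifying that it is well defined on the tensor product, so that the relation $\ol{x\,\nuu(b)} \otimes \ol c = \ol x \otimes \ol{b c}$ is respected because $\nuu$ is multiplicative, and that it is bijective, as both sides have $\kk$-dimension $\dim_\kk \AI$, is the one genuinely computational point. This is the step I expect to demand the most care, precisely in keeping the left- and right-module bookkeeping straight through the two tensor factors.

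Finally, part~\ref{thm:twisted-BGA3} is immediate from the tools already in place. If $\A$ is symmetric, then Proposition~\ref{prp:bimodule-quotient}~\ref{prp:bimodule-quotient2} gives at once that $\AI$ is symmetric. If instead $\AI$ is isomorphic to some Brauer graph algebra, then it is symmetric by Theorem~\ref{thm:BGA}, since symmetricity is invariant under algebra isomorphism. Thus the only real work lies in part~\ref{thm:twisted-BGA2}, and the crux is the bimodule computation $\AI \oA \A_\nuu \oA \AI \cong \AI_{\olnuu}$.
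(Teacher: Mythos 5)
Your proposal is correct and follows essentially the same route as the paper: descent of $\nuu$ via $\nuu(c_a)=c_a$ and centrality of $z$, then Proposition \ref{prp:bimodule-quotient} combined with Theorem \ref{thm:A} and the explicit map $\ol a\otimes p\otimes\ol c\mapsto \ol{a\,p\,\nuu(c)}$, and finally Proposition \ref{prp:bimodule-quotient} \ref{prp:bimodule-quotient2} and Theorem \ref{thm:BGA} for part \ref{thm:twisted-BGA3}. The only detail worth making explicit is the appeal to Lemma \ref{lem:algebra-structure} (i.e.\ $\AI\cong\kk\otimes_{\Rx}\A$) to place $\AI$ in the setting of Subsection \ref{subsec:push}, which you implicitly assume.
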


\begin{proof}
\begin{enumerate}
\item 
 Note that $\nuu(c_a) = c_a$ for
any arrow $a \in Q_1$. This implies that $\nuu$ preserves the ideal $z \A =  \A z$.
Since $\nuu$ is an involution of $\Rx$-algebras, the first statement follows.\\
%
%
%
%
%
 Proposition \ref{prp:bimodule-quotient} and Theorem \ref{thm:A} yield two isomorphisms of $\AI$-bimodules:
$$
\omegad 
\cong
\AI \otimes_{\A} \omegac \otimes_{\A} \AI
\cong
\begin{td} {\AI \otimes_{\A} \A_{\nuu} \otimes_{\A} \AI} \ar{r}{\eta} \& \AI_{\scriptsize \olnuu},\end{td} \quad
 \eta({\ol{a} \otimes b \otimes \ol{c}}) =
{\ol{a \, b \,\nuu(c)}} 
$$
It is straightforward to check that $\eta$ 
is also an isomorphism
 of $\AI$-bimodules.
\item 
\begin{itemize}
\item Assume that the order $\A$ is symmetric.
By Lemma \ref{lem:algebra-structure}
we may apply Proposition \ref{prp:bimodule-quotient} \ref{prp:bimodule-quotient2} to the setup above and conclude that $\AI$ is symmetric.
\item If the $\kk$-algebra $\AI$ is isomorphic to
some Brauer graph algebra, 
then $\AI$  is symmetric by Theorem \ref{thm:BGA}.
 \qedhere
\end{itemize}
\qedhere
\end{enumerate}
\end{proof}

The description of the canonical bimodule $\omegad$ of a twisted Brauer graph algebra $\AI$ yields similar consequences as for ribbon graph orders:

The category $\Tau = \Perf(\AI)$ 
has a Serre functor $\mathbb{S} = \AI_{\scriptsize \olnuu} \otimes_{\AI} (\mathstrut_\text{\textemdash})\!:\begin{td} \Tau \ar{r}{\sim} \& \Tau \end{td}$.
Because of the isomorphism $\mathbb{S}^2 \cong \Id$ of functors,
  the category $\Tau$ 
is \emph{fractionally $\frac{0}{2}$-Calabi-Yau}.
Since $\olnuu$ is an involution, the category $\Tau$ is also \emph{twisted $0$-Calabi-Yau}.\\
As any idempotent of $\AI$ is a fixed point of the map $\varepsilon^*$,
the twisted Brauer graph algebra $\AI$ is \emph{weakly symmetric}.

\subsection{Circular graph orders and circular graph algebras}
\label{subsec:circuits}

Next we study the minimal non-symmetric twisted Brauer graph algebras.

By a \emph{circular graph} of length $n \geq 2$ we mean a 
 connected graph with $n$ vertices of valency two.
A circular graph of length one is a loop.

In general, there can be many non-isomorphic ribbon graph orders with isomorphic underlying graphs in the sense of Construction \ref{constr:graph}.
Fortunately, this does not happen for circular graphs:

\begin{lem}\label{lem:circuit-order}
Let $n \in \N^+$ and $\A_n$ be a 
 ribbon graph order 
such that its graph $\UG_n$ is a circular graph of length $n$.
Then $\A_n$ has the following complete gentle quiver:
$$
\begin{array}{cc}
\text{circular graph }\UG_n & \text{complete gentle quiver }(Q,I)_n  \\
\begin{tikzpicture}[
baseline=0pt]
\def \n {6}
\def \radius {1.5cm}
\def \margin {4} %
\foreach \s in {1,...,5}
{
\def \phi {-120+360/\n * (\s - 1)};
\def \psi {-120+360/\n * (\s - 0.5)};
\def \xi {-120+360/\n * (\s)};
\def \R {2*\radius*cos(180/\n)};
  \node[rectangle, inner sep=1pt] (c\s) at ({\phi}:\radius) {};
  \node[inner sep=0pt,
draw=black!75, fill=white, circle, outer sep=1pt, minimum size=5pt] (z\s) at ({\psi}:{\radius-\margin}) {$\phantom{.}$};
}
  \node[inner sep=0pt] (z0) at (0:0) {};
\foreach \s in {6}
{
\def \phi {-120+360/\n * (\s - 1)};
\def \psi {-120+360/\n * (\s - 0.5)};
\def \xi {-120+360/\n * (\s)};
\def \R {2*\radius*cos(180/\n)};
  \node[rectangle, inner sep=1pt] (c\s) at ({\phi}:\radius) {};
  \node[inner sep=-1pt] (z\s) at ({\psi}:{\radius-\margin}) {};
;
}
\foreach \s/\t in {1/2,2/3,3/4,4/5}
{
\draw[bend left=20,
 draw=black!20, line width=2pt, inner sep=0pt] (z\t) edge (z\s);
}
{
\draw[densely dotted, thick, bend right=20,draw=black!20] (z6) edge  (z1);
\draw[densely dotted, thick, bend right=20,draw=black!20] (z5)  edge (z6);
}
\node[inner sep=0pt] (v4) at (270:{\radius+\margin}) {$v_4$};
\node[inner sep=0pt] (v3) at (330:{\radius+1.5*\margin}) {$v_3$};
\node[inner sep=0pt] (v2) at (30:{\radius+1.5*\margin}) {$v_2$};
\node[inner sep=0pt] (v1) at (90:{\radius+\margin}) {$v_1$};
\node[inner sep=0pt] (vn) at (150:{\radius+1.5*\margin}) {$v_n$};
\node[inner sep=0pt] (e3) at (300:{\radius+\margin}) {$\textcolor{black!66}{e_3}$};
\node[inner sep=0pt] (e2) at (0:{\radius+\margin}) {$\textcolor{black!66}{e_2}$};
\node[inner sep=0pt] (e1) at (60:{\radius+\margin}) {$\textcolor{black!66}{e_1}$};
\node[inner sep=0pt] (en) at (120:{\radius+\margin}) {$\textcolor{black!66}{e_n}$};
\end{tikzpicture}
&
\qquad
\begin{tikzpicture}[
baseline=0pt]
\def \n {6}
\def \radius {1.5cm}
\def \margin {5} %
\foreach \s in {1,...,5}
{
\def \phi {-120+360/\n * (\s - 1)};
\def \psi {-120+360/\n * (\s - 0.5)};
\def \xi {-120+360/\n * (\s)};
\def \R {2*\radius*cos(180/\n)};
  \node[rectangle, inner sep=1pt] (c\s) at ({\phi}:\radius) {$\bt$};
  \node[inner sep=0pt] (z\s) at ({\psi}:{\radius+\margin}) {};
  \draw[<-,thick,black, >=stealth', domain={{\phi+\margin}}:{{\xi-\margin}}]  plot  
({\radius*cos(\x)}, {\radius*sin(\x)})
;
\draw[blue, inner sep=0pt] ({\radius*cos(\phi-3.5*\margin)}, {\radius*sin(\phi-3.5*\margin)})  node (a\s) {};
\draw[black, inner sep=0pt] ({\radius*cos(\phi+3.5*\margin)}, {\radius*sin(\phi+3.5*\margin)})  node (b\s) {};
  \draw[<-,thick, >=stealth', domain={{180 + \phi+\margin}}:{{180 + \xi-\margin}}] plot ({\R * cos(180/\n + \phi)+ \radius*cos(\x)}, {\R * sin(180/\n + \phi ) + \radius*sin(\x)});
\draw[inner sep=0pt] 
({\R * cos(180/\n + \phi)+ \radius*cos(180 + \phi+3.5*\margin)}, {\R * sin(180/\n + \phi ) + \radius*sin(180 + \phi+3.5*\margin)})
 node (y\s) {};
\draw[inner sep=0pt] 
({\R * cos(180/\n + \phi)+ \radius*cos(180 + \xi-3.5*\margin)}, {\R * sin(180/\n + \phi ) + \radius*sin(180 + \xi-3.5*\margin)})
 node (x\s) {};
}
  \node[inner sep=0pt] (z0) at (0:0) {};
\foreach \s in {6}
{
\def \phi {-120+360/\n * (\s - 1)};
\def \psi {-120+360/\n * (\s - 0.5)};
\def \xi {-120+360/\n * (\s)};
\def \R {2*\radius*cos(180/\n)};
  \node[rectangle, inner sep=1pt] (c\s) at ({\phi}:\radius) {$\bt$};
  \draw[thick,dotted, >=stealth', domain={{\phi+\margin}}:{{\xi-\margin}}]  plot  
({\radius*cos(\x)}, {\radius*sin(\x)})
;
\draw[blue, inner sep=0pt] ({\radius*cos(\phi-3*\margin)}, {\radius*sin(\phi-3*\margin)})  node (a\s) {};
\draw[black, inner sep=0pt] ({\radius*cos(\phi+3*\margin)}, {\radius*sin(\phi+3*\margin)})  node (b\s) {};
}
\foreach \s in {2,3,4,5}
{
\draw[red, thick, densely dotted] (a\s) edge[bend right=75,  looseness = 1.5] (b\s);
}
\foreach \s/\t in {1/2,2/3,3/4,4/5}
{
\draw[red, thick, densely dotted] (x\t) edge[bend right=60,  looseness = 1] (y\s);
}
\path (z0) -- node[inner sep=0pt,pos=0.5] (r1) {$b_4$} 
node[inner sep=0pt,pos=1.05] (s1) {$a_4$} (z1);
\path (z0) -- node[inner sep=0pt,pos=0.5] (r2) {$b_3$}
node[inner sep=0pt,pos=1.1] (s2) {$a_3$} (z2);
\path (z0) -- node[inner sep=0pt,pos=0.5] (r3) {$b_2$} 
node[inner sep=0pt,pos=1.1] (s3) {$a_2$}(z3);
\path (z0) -- node[inner sep=0pt,pos=0.5] (r4) {$b_1$} 
node[inner sep=0pt,pos=1.05] (s4) {$a_1$}(z4);
\path (z0) -- node[inner sep=0pt,pos=0.5] (r5) {$b_n$}
node[inner sep=0pt,pos=1.1] (sn) {$a_n$} (z5);
\end{tikzpicture}
\begin{array}{c}
a_1 \, a_n = b_n \, b_1 = 0 \\
a_{j+1} a_j = b_j b_{j+1} = 0
\\
{\mathstrut_{\text{for any index }1 \ \leq \ j \  < \ n}}
\end{array}
\end{array}
$$
\end{lem}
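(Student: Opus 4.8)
The plan is to recover the quiver from the combinatorics of the graph, the crucial point being that valency two rigidifies the permutation $\sigma$ of Notation \ref{not:arrow}. First I would record what Construction \ref{constr:graph} forces: the nodes of $\UG$ are the $\sigma$-orbits of $Q_1$ and the edges are indexed by $Q_0$, so a circular graph of length $n$ gives exactly $n$ orbits and $|Q_0| = n$ vertices, hence $|Q_1| = 2n$ by Definition \ref{dfn:cg}\ref{dfn:cg1}. The decisive observation is that the valency of a node $v_a$ (a loop counted twice) equals the cardinality $n(a)$ of the orbit $v_a$, since by Construction \ref{constr:graph} each arrow $a'$ of that orbit contributes precisely the incident edge $e_{s(a')}$. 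Because every node of a circular graph has valency two, it follows that $n(a) = 2$ for every arrow; equivalently $\sigma$ is a fixed-point-free involution and each repetition-free cycle is $c_a = \sigma(a)\,a$ of length two.

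Next I would fix coordinates from the cyclic shape of $\UG$. Choosing a traversal of the circle, I label the nodes $v_1, \dots, v_n$ and the vertices of $Q$ as $e_1, \dots, e_n$ so that $e_j$ is the edge joining $v_j$ and $v_{j+1}$, all indices read modulo $n$; the case $n = 1$, where $e_1$ is the loop at $v_1$, is subsumed by the same count. Since the two-element orbit $v_j$ is incident exactly to $e_{j-1}$ and $e_j$, one of its arrows starts at $e_{j-1}$ and the other at $e_j$; I name these $a_j$ and $b_j$. As $\sigma$ swaps the two arrows of the orbit we have $\sigma(a_j) = b_j$, and since $\sigma(a_j)\,a_j$ is a path we get $s(\sigma(a_j)) = t(a_j)$, forcing $a_j\colon e_{j-1} \to e_j$ and $b_j\colon e_j \to e_{j-1}$. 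Thus the $a_j$ trace one oriented $n$-cycle and the $b_j$ the opposite one, matching the arrows displayed for $(Q,I)_n$.

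Finally I would pin down $I$. By Definition \ref{dfn:cg}\ref{dfn:cg2}--\ref{dfn:cg3} the ideal is generated by the vanishing length-two paths, and each arrow has a unique nonzero continuation, namely the one completing its cycle $c_a$. Here the surviving compositions are $c_{a_j} = b_j a_j$ and $c_{b_{j+1}} = a_{j+1} b_{j+1}$, so the vanishing ones are exactly $a_{j+1} a_j$ and $b_j b_{j+1}$ for all $j$ modulo $n$, which is precisely the displayed relation ideal. Reading off these identifications yields an isomorphism of complete gentle quivers, hence of ribbon graph orders. I expect the only genuine obstacle to be conceptual rather than computational: one must see that valency two collapses every $\sigma$-orbit to size two, so that the ribbon datum $(\sigma_v)_v$ is forced (each $\sigma_v$ is the unique transposition of the two-element set of edges at $v$) --- this is exactly why, \emph{unlike} the general situation warned about above, the circular graph determines the order uniquely. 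Checking that the residual freedom (orientation of the traversal, and the naming of $a_j$ against $b_j$) alters the presentation only up to isomorphism is then routine.
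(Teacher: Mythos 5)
Your proposal is correct and follows essentially the same route as the paper's proof: identify the two arrows of each $\sigma$-orbit by their starting vertices (the two edges incident to that node), observe that $\sigma$ must exchange them since the orbit has size two, and read off the sources, targets and zero relations. Your explicit justification that valency two forces $n(a)=2$ (valency equals orbit size) is a detail the paper leaves implicit, but the argument is the same.
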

\begin{proof}
 Let $1 \leq j \leq n$.
In the following we consider all indices modulo $n$.
For each node
 $v_j \in \UG_0$ its set of incident edges is given by
$E_{v_j} = \{ e_j, e_{j+1} \} = \{ e_{s(a)} \ | \ a\in Q_1: v_a = v_j\}$.
Let $a_j$ and $b_j\in Q_1$ denote the arrows 
such that
$v_{a_j} = v_{b_j} = v_j$, $s(a_j) = j$ and $s(b_j) = j+1$.
Then the permutation $\sigma$ exchanges $a_j$ and $b_j$,
so
$t(a_j) = s(b_j)$ and $t(b_j) = s(a_j)$.
This shows the claim.
\end{proof}
Next, we consider twisted Brauer graph algebras which have a circular graph.
The simplest example of such an 
algebra
 is given by $\kk \langle\!\langle a,b \rangle\! \rangle/ (a^2, b^2, ab + ba)$. 
This algebra  is  symmetric
 if and only if the field $\kk$ has characteristic two
\cite{Skowronski-Yamagata}*{Chapter IV, Example 2.8}.
 The last statement and its proof generalize
as follows:
\begin{lem}\label{lem:circuit-algebra}
Let $\A_n$ be a circular graph order as in Lemma \ref{lem:circuit-order}
and
$m\!: \UG_0 \to \N^+$ be some map.
Let $\AI_n = \A_n/z \A_n$
be the associated twisted Brauer graph algebra.\\
If the $\kk$-algebra
$\AI_n$
is symmetric,
then
 $n$ is even or the characteristic of $\kk$ is two.
\end{lem}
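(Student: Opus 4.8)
The plan is to prove the contrapositive: assuming $n$ is odd and $\chr \kk \neq 2$, I will show that $\AI_n$ is not symmetric. The starting point is Theorem \ref{thm:twisted-BGA} \ref{thm:twisted-BGA2}, which identifies the canonical bimodule as $\omegad \cong (\AI_n)_{\scriptsize \olnuu}$, where $\olnuu$ is the $\kk$-algebra involution of $\AI_n$ induced by a polarization $\varepsilon$ of $\A_n$. Since $\AI_n$ is symmetric precisely when $\omegad \cong \AI_n$ as bimodules, and since a twist bimodule $B_\phi$ is isomorphic to $B$ as a $B$-bimodule if and only if $\phi$ is inner (the isomorphism being $x \mapsto xu$ for a unit $u$), it suffices to show that $\olnuu$ is \emph{not} an inner automorphism.

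First I would record the explicit action of $\olnuu$. The involution $\nuu$ fixes each idempotent $e_i$ and, since $\sigma$ exchanges $a_j$ and $b_j$ by Lemma \ref{lem:circuit-order}, it scales both $a_j$ and $b_j$ by the sign $s_j := \varepsilon_{a_j}\varepsilon_{b_j} \in \{\pm 1\}$; this descends to $\AI_n$ because $\nuu$ preserves the ideal $z\A_n$. As $a_j$ and $b_{j-1}$ are the two arrows starting at vertex $j$, the defining property of a polarization gives $\varepsilon_{a_{j+1}} = -\varepsilon_{b_j}$ for every $j$ (indices read mod $n$), so that $s_j = -\varepsilon_{a_j}\varepsilon_{a_{j+1}}$ and therefore
\[
\prod_{j=1}^n s_j \;=\; (-1)^n \Big(\prod_{j=1}^n \varepsilon_{a_j}\Big)^{2} \;=\; (-1)^n \;=\; -1,
\]
the final equality using that $n$ is odd. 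This parity identity is the key numerical obstruction.

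Next I would convert "inner" into a condition on $\rad \AI_n/\rad^2 \AI_n$. Because the relations defining $\AI_n = \kk Q/(I + J_+)$ all lie in the square of the arrow ideal (here $I + J_+$ is admissible, as circular graphs have no leaves, so $n(a_j) = 2$ and there is no leaf relation of the forbidden form), the images of the arrows form a $\kk$-basis of $\rad \AI_n/\rad^2 \AI_n$. Conjugation by a unit $u = \sum_i \lambda_i e_i + r$ with $\lambda_i \in \kk^\times$ and $r \in \rad \AI_n$ acts on an arrow $x\colon i \to j$, modulo $\rad^2$, by the scalar $\lambda_j \lambda_i^{-1}$, since $r$ contributes only terms of $\rad^2$. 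Hence, were $\olnuu$ inner, there would be scalars $\lambda_1,\dots,\lambda_n \in \kk^\times$ with $\lambda_{j+1}\lambda_j^{-1} = s_j$ for each arrow $a_j\colon j \to j+1$; multiplying these relations around the cycle forces $\prod_{j=1}^n s_j = 1$, contradicting $\prod_j s_j = -1$. Thus $\olnuu$ is not inner, $\omegad \not\cong \AI_n$, and $\AI_n$ is not symmetric.

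The main obstacle is the bookkeeping that links the abstract notion of symmetricity to this concrete sign computation: I must justify the bimodule criterion $B_\phi \cong B \Leftrightarrow \phi$ inner, the fact that the arrow images span $\rad/\rad^2$, and that inner automorphisms act diagonally on $\rad/\rad^2$ with the scalars $\lambda_j\lambda_i^{-1}$. These are standard once stated, after which the parity identity $\prod_j s_j = (-1)^n$ carries the real content. Note that $\chr \kk \neq 2$ enters exactly to guarantee $-1 \neq 1$ in $\kk$, so that the value $(-1)^n = -1$ for odd $n$ is a genuine obstruction rather than a vacuous one.
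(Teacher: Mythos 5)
Your proof is correct, but it takes a genuinely different route from the paper's. The paper argues directly with a symmetrizing form: if $\AI_n$ is symmetric there is a $\kk$-linear form $\phi$ with $\phi(xy)=\phi(yx)$ that does not vanish on the one-dimensional left ideal spanned by the socle element $(b_1a_1)^{m_1}$; setting $\lambda_j=\phi((b_ja_j)^{m_j})$, the defining anti-commutativity relations $(b_{j+1}a_{j+1})^{m_{j+1}}=-(a_jb_j)^{m_j}$ together with the trace property give $\lambda_{j+1}=-\lambda_j$ around the cycle, whence $\lambda_1=(-1)^n\lambda_1$ with $\lambda_1\neq 0$. So the paper's obstruction lives in the socle and uses the relations $J_+$ explicitly, and the argument is self-contained apart from Lemma \ref{lem:BGA-relations} and the standard characterization of symmetric algebras. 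You instead invoke Theorem \ref{thm:twisted-BGA} \ref{thm:twisted-BGA2} to reduce symmetricity to the question of whether the induced Nakayama involution $\olnuu$ is inner, and your obstruction lives in $\rad\AI_n/\rad^2\AI_n$: the product of the scalars $s_j=\varepsilon_{a_j}\varepsilon_{b_j}$ by which $\olnuu$ rescales the arrows equals $(-1)^n$ (using $\varepsilon_{a_{j+1}}=-\varepsilon_{b_j}$ at each vertex), while an inner automorphism would force this product to telescope to $1$. Both computations are the same parity count around the circle, but yours never touches the relations $J_+$ or the socle; the price is that you must justify the two standard facts you flag (twist trivial iff the automorphism is inner, and the diagonal action of conjugation on $\rad/\rad^2$, which requires noting that $I+J_+$ is generated in degree $\geq 2$ so that arrows are a basis of $\rad/\rad^2$). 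What your route buys is a cleaner conceptual statement — the Nakayama automorphism of $\AI_n$ is outer when $n$ is odd and $\chr\kk\neq 2$ — which connects directly to condition \eqref{thm:symmetry4} of Theorem \ref{thm:symmetry}; what the paper's route buys is independence from Theorem \ref{thm:twisted-BGA}. Both proofs cover the degenerate case $n=1$ (a loop), where your product is the single sign $s_1=-1$.
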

\begin{proof}
By Lemma \ref{lem:BGA-relations}
the twisted Brauer graph algebra $\AI_n$
has the same quiver
$(Q,I)_n$  as the circular graph order $\A_n$
together with 
 the additional relations
$$
(b_1 a_1)^{m_1} = -(a_n b_n)^{m_n} \quad \text{and}\quad (b_{j+1} a_{j+1})^{m_{j+1}} = - (a_j b_j)^{m_j}
$$
where we denote $m_j = m(v_j)$ for each index $1 \leq j \leq n$.\\
If $\AI_n$ is symmetric, there is a $\kk$-linear form $\phi\!: \AI_n \to \kk$ such that 
 $\phi(I) \neq 0$ for any left non-zero ideal $I$ of $\AI$ and
$\phi(xy) = \phi(yx)$ for all $x,y \in \AI$ \cite{Skowronski-Yamagata}*{Chapter IV, Theorem 2.2}.
For any index $1 \leq j \leq n$ we set $\lambda_j = \phi((b_j a_j)^{m_j})$. The vector space $I = \langle (b_1 a_1)^{m_1} \rangle_{\kk}$ is a left ideal of $\AI$, since $a I = 0$ for any arrow $a \in Q_1$. So $\lambda_1 \neq 0$.\\
Moreover, it holds that $\lambda_{1} = \phi((b_{1} a_{1})^{m_1}) = - \phi((a_n b_n)^{m_n}) = -\phi((b_n a_n)^{m_n})= -\lambda_n$, 
 and similarly $\lambda_{j+1} = - \lambda_j$ for any index $1 \leq j < n$. It follows that $\lambda_1 = -\lambda_n = (-1)^2\,\lambda_{n-1} = \ldots = (-1)^n \lambda_1$. 
Thus the number
$n$ is even or $\chr \kk = 2$.
\end{proof}
A graph 
is \emph{bipartite} if
each of its nodes can be colored in one of two colors such that each edge is incident to nodes of different colors.
It is a classical result that a graph is bipartite if and only if it has no circular subgraphs of odd length.

Our next result yields another second implication for our final characterization: 
\begin{prp}\label{prp:symmetric-twisted-BGA}
Let $\A$ be a ribbon graph order, $\kk$ its base field, $\UG$ its graph, $m: \UG_0 \to \N^+$ be any map.
As before, we set $\AI = \A/z \A$ where
$z = \sum_{a \in Q_1} c_a^{m_a}$. \\
If the $\kk$-algebra $\AI$ is symmetric, then the graph $\UG$
is bipartite
or the field $\kk$ has characteristic two.
\end{prp}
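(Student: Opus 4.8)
The plan is to prove the contrapositive through the classical fact quoted just before the statement: a graph is bipartite if and only if it contains no circular subgraph of odd length. So I assume $\chr\kk\neq 2$ and that $\AI$ is symmetric, and I will show that every circular subgraph of $\UG$ has even length. Symmetry furnishes a symmetrizing $\kk$-linear form $\phi\colon\AI\to\kk$ with $\phi(xy)=\phi(yx)$ for all $x,y$ and $\phi(L)\neq 0$ for every nonzero left ideal $L$ (\cite{Skowronski-Yamagata}*{Chapter IV, Theorem 2.2}). This is the same input as in the proof of Lemma \ref{lem:circuit-algebra}, and the argument below globalizes that computation from a single circle to an arbitrary odd circuit inside $\UG$.

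First I would assign to every node a scalar. For $v\in\UG_0$ and any arrow $a$ in the $\sigma$-orbit $v$, set $\mu(v):=\phi(\ol{c_a}^{m_a})$. This is independent of the chosen arrow: writing $c_a=w\,a$ with $w=\sigma^{n(a)-1}(a)\cdots\sigma(a)$, the trace property rotates $\phi(\ol{c_a}^{m_a})=\phi(\ol{(aw)}^{m_a})=\phi(\ol{c_{\sigma(a)}}^{m_a})$, and since $m$ and $n$ are constant along $\sigma$-orbits this value depends only on $v$. Next, if $a,b$ are the two arrows starting at a common vertex $i=s(a)=s(b)$, then $ze_i=c_a^{m_a}+c_b^{m_b}$ lies in $z\A$, so in $\AI=\A/z\A$ we obtain the relation $\ol{c_a}^{m_a}=-\ol{c_b}^{m_b}$ (Lemma \ref{lem:BGA-relations}). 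Applying $\phi$ gives the sign rule $\mu(v_a)=-\mu(v_b)$ across the edge $e_i$ joining the nodes $v_a$ and $v_b$.

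Finally I would check that $\mu$ is nonzero on every node lying on a circuit. Such a node is not a leaf, so $n(a)\geq 2$ and $\ol{c_a}^{m_a}$ is a nonzero socle element killed by every arrow (Lemma \ref{lem:BGA-relations}), hence spans a one-dimensional left ideal; nondegeneracy of $\phi$ then forces $\mu(v)=\phi(\ol{c_a}^{m_a})\neq 0$. Traversing the $\ell$ edges of a circuit of length $\ell$ and applying the sign rule once per edge returns to the starting node with total sign $(-1)^{\ell}$, so $\bigl(1-(-1)^{\ell}\bigr)\mu(v)=0$; as $\mu(v)\neq 0$ and $\chr\kk\neq 2$, the length $\ell$ must be even. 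Thus $\UG$ has no odd circuit and is bipartite. I expect the only delicate point to be this nonvanishing step: one must know that the maximal cyclic paths $\ol{c_a}^{m_a}$ survive in $\AI$ and generate the socle, which is precisely where the leaf degeneracy flagged after Lemma \ref{lem:BGA-relations} has to be excluded, harmlessly since leaves lie on no circuit. A structurally cleaner but technically costlier alternative is to cut down to one odd circuit: by Lemma \ref{lem:subgraph} and Lemma \ref{lem:circuit-order} it is realized by an idempotent corner $e\A e\cong\A_n$ with $n$ odd, the corner $e\AI e$ of the symmetric algebra $\AI$ is again symmetric (restrict $\phi$), and Lemma \ref{lem:circuit-algebra} then yields $\chr\kk=2$; there the work migrates to identifying $e\AI e$ with the circular twisted Brauer graph algebra of $\A_n$.
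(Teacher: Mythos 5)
Your primary argument is correct, but it takes a genuinely different route from the paper. The paper's proof is exactly the ``technically costlier alternative'' you sketch at the end: it picks an arbitrary circular subgraph $\UG'$ with $n$ edges, passes to the idempotent corner $e\A e$, identifies it with the circular graph order $\A_n$ via Lemmas \ref{lem:subgraph} and \ref{lem:circuit-order}, checks that $e\AI e\cong \A^e/z^e\A^e\cong\AI_n$ is again symmetric, and invokes Lemma \ref{lem:circuit-algebra} to force $n$ even. Your main proof instead globalizes the sign computation of Lemma \ref{lem:circuit-algebra}: the node invariant $\mu(v)=\phi(\ol{c_a}^{\,m_a})$ is well defined by the trace property (your rotation $\phi(\ol{c_a}^{\,m})=\phi(\ol{c_{\sigma(a)}}^{\,m})$ is the same trick the paper uses to pass from $(a_nb_n)^{m_n}$ to $(b_na_n)^{m_n}$), the relation $ze_i=c_a^{m_a}+c_b^{m_b}$ gives the alternation $\mu(v_a)=-\mu(v_b)$ across each edge, and nondegeneracy of $\phi$ on the one-dimensional left ideal $\kk\,\ol{c_a}^{\,m_a}$ gives $\mu\neq 0$ --- the same two inputs (\cite{Skowronski-Yamagata}*{Chapter IV, Theorem 2.2} and Lemma \ref{lem:BGA-relations}) as the paper, but applied in $\AI$ directly rather than in a corner. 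What your route buys is independence from Lemmas \ref{lem:subgraph}, \ref{lem:circuit-order} and \ref{lem:circuit-algebra} and from the verification that $e\AI e\cong\AI_n$; what the paper's route buys is modularity, isolating the computation in the minimal circular case. Two small points: the nonvanishing of $\ol{c_a}^{\,m_a}$ in $\AI$ that you flag as delicate is indeed needed (it follows from the $\Rx$-basis of Lemma \ref{lem:basis} and Proposition \ref{prp:RGO}~\ref{prp:RGO2}), but the paper's own proof of Lemma \ref{lem:circuit-algebra} relies on the same fact with the same level of justification, so you are not below the paper's standard of rigor; and your exclusion of leaves is harmless but not actually necessary for nonvanishing, only for the (correct) observation that every node of a circuit carries the alternating sign.
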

\begin{proof}
Let $\AI$ be symmetric.
We may assume that the characteristic of the field $\kk$ is not two
 and that the graph $\UG$ is not a tree.
Let $\UG'$ be any circular subgraph in $\UG$.
We have to show that its number $n$ of edges is even.\\
By Construction \ref{constr:graph}
there is a bijection between the edges of $\UG$ and the primitive idempotents of $\A$.
In particular, the $n$ edges of $\UG'$
correspond to an idempotent $e \in \A$ which is a sum of $n$ primitive idempotents.
By Lemma \ref{lem:subgraph} the idempotent subring $\A^e = e \A e$
is a ribbon graph order with
underlying graph $\UG'$.
Lemma \ref{lem:circuit-order} implies that
 $\A^e$ 
 is isomorphic to the circular graph order $\A_n$. 
Since $\AI$ is symmetric, so is its subalgebra $\AI^e =  e\AI e$ \cite{Skowronski-Yamagata}*{Chapter IV, Theorem 4.1}. 
Note that the element $z^e = eze$ lies in the center of $\A^e$ and
there are isomorphisms of $\kk$-algebras $\AI^e \cong \A^e /z^e\, \A^e \cong 
 \AI_n$. By Lemma \ref{lem:circuit-algebra}  the number $n$ is even.
\end{proof}

\subsection{Bipartite ribbon graph orders and Brauer graph algebras}
\label{subsec:bipartite}

For the sake of completeness we give several equivalent 
characterizations of bipartite ribbon graphs one of which is due to Kauer \cite{Kauer}:
\begin{prp}\label{prp:bipartite}
Let $\A$ be a ribbon graph order, $\UG$ its graph and $(Q,I)$ its quiver. Then the following conditions are equivalent:
\begin{enumerate}[label=$({\mathsf{\arabic*}})$, ref=${\mathsf{\arabic*}}$]
\item \label{prp:bipartite1} The graph $\UG$ is bipartite, that is,
there is a map
 $\beta\!: \UG_0 \to \{+,-\}$ 
such that $\beta(v_a) \neq \beta(v_b)$
if there is an edge between the nodes $v_a$ and $v_b$.
\item \label{prp:bipartite2} There is a \emph{$\sigma$-stable} polarization $\epsilon$ of $\A$,
that is, there is a polarization $\epsilon
 \!:Q_1 \to \{+,-\}$ 
such that $\epsilon_a = \epsilon_{\sigma(a)}$ for any arrow $a \in Q_1$.
\item \label{prp:bipartite3}
Each arrow of $Q$ can be colored in one of two colors such that 
any two arrows $a,b \in Q_1$ with  $s(b) = t(a)$ have different colors if and only $b\,a \in I$.
\item \label{prp:bipartite4} The rank of the Cartan matrix $C_\A$ of the $\Rx$-order $\A$ is equal to $|\UG_0| - \mathrm{c}_\UG$, where $c_\UG$ denotes the number of connected subgraphs of the graph $\UG$.
\end{enumerate}
\end{prp}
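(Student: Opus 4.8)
The plan is to split the proof into two groups: the purely combinatorial reformulations $\ref{prp:bipartite1}\Leftrightarrow\ref{prp:bipartite2}\Leftrightarrow\ref{prp:bipartite3}$, which merely transport the data of $\UG$ onto $Q_1$, and the homological characterization $\ref{prp:bipartite1}\Leftrightarrow\ref{prp:bipartite4}$, which is the substantive part. For $\ref{prp:bipartite1}\Leftrightarrow\ref{prp:bipartite2}$ I would invoke the two bijections of Construction \ref{constr:graph}: nodes of $\UG$ are the $\sigma$-orbits in $Q_1$ and edges of $\UG$ are the vertices $Q_0$. A $\sigma$-stable map $\epsilon\colon Q_1\to\{+,-\}$ is the same datum as a map $\beta\colon\UG_0\to\{+,-\}$ via $\beta(v_a)=\epsilon_a$, and the polarization condition — opposite signs on the two arrows starting at each vertex $i$ — translates exactly into the requirement that the two endpoints $v_a,v_b$ of the edge $e_i$ receive opposite colours. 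Thus $\sigma$-stable polarizations are precisely proper $2$-colourings of $\UG$, each existing iff $\UG$ is bipartite. For $\ref{prp:bipartite2}\Leftrightarrow\ref{prp:bipartite3}$ I would observe that such a polarization is literally a colouring as in \ref{prp:bipartite3}: among the two arrows starting at $t(a)$, only $\sigma(a)$ satisfies $\sigma(a)\,a\notin I$ while the other $b'$ has $b'a\in I$, so the rule ``$\epsilon_a=\epsilon_b$ iff $b\,a\notin I$'' is equivalent to $\sigma$-stability ($\epsilon_a=\epsilon_{\sigma(a)}$) together with the polarization property, and conversely.

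The heart of the matter is $\ref{prp:bipartite1}\Leftrightarrow\ref{prp:bipartite4}$, which I would prove by identifying the Cartan matrix with a Gram matrix of the incidence matrix of $\UG$. By Proposition \ref{prp:RGO}\,\ref{prp:RGO3} together with Construction \ref{constr:normalization} there is a split semisimple isomorphism $\KK\otimes_\Rx\A\cong\prod_{v\in\UG_0}\Mat_{n_v}(\KK)$, whose simple modules $S_v$ are indexed by the nodes, with $\End(S_v)=\KK$. Writing $P_i=\A e_i$, the Cartan entry $c_{ij}=\rk_\Rx e_i\A e_j$ equals $\dim_\KK\Hom_{\KK\A}(\KK\otimes P_i,\KK\otimes P_j)$ by flatness of $\Rx\to\KK$, and since the $S_v$ are pairwise non-isomorphic this is $\sum_{v}D_{iv}D_{jv}$, where $D_{iv}$ is the multiplicity of $S_v$ in $\KK\otimes P_i$; in matrix form $C_\A=D\,D^{T}$. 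Using the embedding $e_i=\sum_{a\colon s(a)=i}\wt e_{i_a}$ from the proof of Proposition \ref{prp:RGO}, each $\wt e_{i_a}$ is primitive in the block indexed by $v_a$, so $D_{iv}$ counts the arrows $a$ with $s(a)=i$ and $v_a=v$, i.e.\ the incidence multiplicity of the edge $e_i$ with the node $v$ (equal to $2$ for a loop, to $1$ for an ordinary incidence). Hence $D$ is the transpose of the unsigned incidence matrix $B$ of $\UG$.

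It then remains to read off the rank. Over $\Q$ one has $\rk C_\A=\rk(D\,D^{T})=\rk D=\rk B$, the middle equality because $D\,D^{T}$ is the Gram matrix of the rows of $D$ for the standard positive-definite form on $\Q^{\UG_0}$. The rank of the unsigned incidence matrix is the classical quantity $\rk_\Q B=|\UG_0|-b_\UG$, where $b_\UG$ is the number of connected components of $\UG$ that are bipartite (a loop, being an odd cycle, makes its component non-bipartite). Therefore $\rk C_\A=|\UG_0|-c_\UG$ holds iff $b_\UG=c_\UG$, i.e.\ iff every connected component is bipartite, which is exactly condition \ref{prp:bipartite1}.

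I expect the main obstacle to lie in the bookkeeping of this last group of steps: justifying $C_\A=D\,D^{T}$ for an order that need \emph{not} be symmetric, so that one cannot simply cite Brauer reciprocity but must argue directly through flat base change to $\KK$ and split semisimplicity, and then handling loops and multiple edges consistently both in the identification $D=B^{T}$ and in the incidence-matrix rank formula. By comparison the equivalences $\ref{prp:bipartite1}$–$\ref{prp:bipartite3}$ are routine translations of the graph data.
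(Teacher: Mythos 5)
Your proposal is correct, and the combinatorial equivalences $\eqref{prp:bipartite1}\Leftrightarrow\eqref{prp:bipartite2}\Leftrightarrow\eqref{prp:bipartite3}$ match the paper exactly: the paper proves $\eqref{prp:bipartite1}\Rightarrow\eqref{prp:bipartite2}$ by the same transport of a $2$-colouring along $Q_1\twoheadrightarrow\UG_0$, and explicitly declares $\eqref{prp:bipartite2}\Rightarrow\eqref{prp:bipartite3}\Rightarrow\eqref{prp:bipartite1}$ routine and omits them, which your sketch fills in correctly. Where you genuinely diverge is $\eqref{prp:bipartite1}\Leftrightarrow\eqref{prp:bipartite4}$: the paper simply cites Kauer's formula $\rk C_\A=|\UG_0|-\mathrm{bc}_\UG$ (with $\mathrm{bc}_\UG$ the number of bipartite components) and deduces the equivalence, whereas you reprove that formula from scratch by writing $C_\A=D\,D^{T}$ via flat base change to the split semisimple algebra $\KK\otimes_{\Rx}\A$, identifying $D$ with the transpose of the unsigned incidence matrix of $\UG$ through the Bäckström embedding $e_i\mapsto\sum_{a:\,s(a)=i}\wt e_{i_a}$, and invoking $\rk(DD^{T})=\rk D$ over $\Q$ together with the classical rank formula for unsigned incidence matrices. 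Your route buys a self-contained proof that also explains \emph{why} bipartiteness is the relevant condition (the loop column $2e_v$ and the odd cycles are exactly what obstruct the rank drop), at the cost of some bookkeeping you rightly flag (loops contributing $2$, the multiplicity matrix $D$ being well defined without any symmetry assumption); the paper's route is shorter but opaque, deferring all of that to the reference. Both are valid; the only small caution is that the ``classical'' rank formula $\rk_{\Q}B=|\UG_0|-b_\UG$ for graphs with loops and multiple edges is less standard than for simple graphs and would deserve either a reference or the two-line verification you already gesture at.
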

\begin{proof}
\begin{itemize}
\item \eqref{prp:bipartite1} $\Rightarrow$ \eqref{prp:bipartite2}:
Since $\UG$ is bipartite,
there is a map
 $\beta\!: \UG_0 \to \{+,-\}$ as above.
Let $\pi: Q_1 \twoheadrightarrow \UG_0$
be the map which assigns to each arrow $a \in Q_1$ its $\sigma$-orbit $v_a$.
We claim that
the $\sigma$-stable map 
$\epsilon = \pi \circ \beta:
Q_1 \to \{+,-\}$
 is
a polarization.\\
Let $a,b \in Q_1$ be two arrows such that $s(a) = s(b)$. By definition of the graph $\UG$ there is an edge between $v_a$ and $v_b$ in $\UG$.
This implies that
$\epsilon_{a} = \beta(v_a) \neq \beta(v_b) = \epsilon_{b}$. Thus $\epsilon_{\star}$ is a $\sigma$-stable polarization of $\A$. 
\item 
By Kauer's results \cite{Kauer}*{Hauptsatz 2.15, Satz 3.12}
it holds that $\rk C_\A = |\UG_0| - \mathrm{bc}_\UG$, 
where $\mathrm{bc}$ denotes the number of bipartite connected components of the graph $\UG$.
This implies the equivalence 
\eqref{prp:bipartite1} $\Leftrightarrow$ \eqref{prp:bipartite4}.
\end{itemize}
We skip the simple proofs of the implications \eqref{prp:bipartite2} $\Rightarrow$ \eqref{prp:bipartite3}
$\Rightarrow$ \eqref{prp:bipartite1}
 since they will not be needed in the following.
\end{proof}
The next statement yields the last two implications for the
final subsection:
\begin{prp}
\label{prp:non-twisted-BGA}
Let $\A$ be a ribbon graph order such that its graph $\UG$ is bipartite or its base field $\kk$ has characteristic two. Then the following statement holds:
\begin{enumerate}
\item \label{prp:non-twisted-BGA1} There is a polarization $\epsilon$ of $\A$ which induces a trivial involution $\epsilon^\star$.
\end{enumerate}
Let $m: \UG_0 \to \N^+$ be a map on the set of $\sigma$-orbits and $\varepsilon$ be any polarization of $\A$.
We set $z = \sum_{a \in Q_1} c_a^{m(a)}$ 
and $\AI =  \A/z\A$, $w=\sum_{a \in Q_1} \varepsilon_{a} c_a^{m(a)}$
and $\BI = \A/\A w \A$.
\begin{enumerate} \setcounter{enumi}{1}
\item \label{prp:non-twisted-BGA2} If
for each $\sigma$-orbit $v \in \UG_0$
there exists an element $\lambda
\in \kk$ such that
$\lambda^{m(v)} = - 1$, then there is a $\kk$-algebra isomorphism $\AI \cong \BI$.
\end{enumerate}
\end{prp}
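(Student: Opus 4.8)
The plan is to prove both parts by splitting throughout into the case $\chr \kk = 2$ and the case where $\UG$ is bipartite; by hypothesis at least one of these always holds. For part \ref{prp:non-twisted-BGA1}, I would first note that a polarization always exists: sending each arrow to the \emph{other} arrow with the same source is, by Definition \ref{dfn:cg} \ref{dfn:cg1}, a fixed-point-free involution of $Q_1$, so declaring one arrow $+$ and the other $-$ in each of its two-element orbits yields a map $\epsilon$ as in Definition \ref{dfn:polarization}. If $\chr \kk = 2$, then by Remark \ref{rmk:polarization} \emph{any} such $\epsilon$ satisfies $\epsilon^{\star} = \id$. If instead $\UG$ is bipartite, I would invoke the implication \ref{prp:bipartite1} $\Rightarrow$ \ref{prp:bipartite2} of Proposition \ref{prp:bipartite} to obtain a $\sigma$-stable polarization $\epsilon$, so that $\epsilon_{\sigma(a)} = \epsilon_a$ and hence $\epsilon_{\sigma(a)}\,\epsilon_a = +$ for every arrow; the defining formula of Definition \ref{dfn:involution} then gives $\epsilon^{\star}(a) = a$ on arrows and $\epsilon^{\star}(e_i) = e_i$ on idempotents, so $\epsilon^{\star} = \id$.

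For part \ref{prp:non-twisted-BGA2}, I would pass to the finite-dimensional presentations of Lemma \ref{lem:BGA-relations}. As all orders are assumed basic, these give $\kk$-algebra isomorphisms $\AI \cong \kk Q/(I+J_+)$ and $\BI \cong \kk Q/(I+J_-)$, the latter independently of the polarization used to define $w$; thus it suffices to construct an isomorphism $\kk Q/(I+J_-) \xrightarrow{\sim} \kk Q/(I+J_+)$. In characteristic two one has $J_+ = J_-$ and there is nothing to prove, so I may assume $\UG$ is bipartite, fixing a two-colouring $\beta\colon \UG_0 \to \{+1,-1\}$ separating adjacent nodes.

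The isomorphism will be a diagonal rescaling of the arrows. For each $\sigma$-orbit $v$ I would choose $\rho_v \in \kk^{\times}$ with $\rho_v^{m(v)} = \beta(v)$, taking $\rho_v = 1$ when $\beta(v) = +1$ and $\rho_v = \lambda$ when $\beta(v) = -1$, where $\lambda^{m(v)} = -1$ is supplied by hypothesis. Spreading $\rho_v$ over the arrows of $v$ (say, scaling one chosen arrow of the orbit by $\rho_v$ and the rest by $1$) defines scalars $\mu_a \in \kk^{\times}$, and $\Phi\colon \kk Q/I \to \kk Q/I$, $e_i \mapsto e_i$, $a \mapsto \mu_a a$, is an algebra automorphism preserving $I$. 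Since the arrows of the cycle $c_a$ are exactly those of the orbit $v_a$, the product of their $\mu$'s is $\rho_{v_a}$, so $\Phi(c_a^{m_a}) = \rho_{v_a}^{m(v_a)}\, c_a^{m_a} = \beta(v_a)\, c_a^{m_a}$. For two arrows $a,b$ with $s(a) = s(b)$ the colouring gives $\beta(v_b) = -\beta(v_a)$, whence
\[
\Phi\bigl(c_a^{m_a} - c_b^{m_b}\bigr) = \beta(v_a)\bigl(c_a^{m_a} + c_b^{m_b}\bigr).
\]
As $\beta(v_a) \in \{\pm 1\}$ is a unit, $\Phi$ carries the generators of $J_-$ to unit multiples of those of $J_+$, so it descends to an isomorphism $\BI \cong \kk Q/(I+J_-) \xrightarrow{\sim} \kk Q/(I+J_+) \cong \AI$.

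The step I expect to be the crux is this sign-flip across edges. Because the factor acquired by a cycle $c_a$ under a diagonal rescaling depends only on the orbit $v_a$, such a rescaling can realise only \emph{node-indexed} signs; flipping the relative sign on \emph{every} edge simultaneously is therefore possible precisely when $\UG$ admits a proper two-colouring, i.e.\ is bipartite, and in particular loop-free. This is exactly why the non-bipartite loop $\kk\langle\!\langle a,b\rangle\!\rangle/(a^2,b^2)$ is genuinely excluded in odd characteristic, and it is why the overarching hypothesis ``bipartite or $\chr\kk = 2$'' is needed. The only remaining delicate point is arithmetic: the node-signs $\pm 1$ must be attainable as $m(v)$-th powers in $\kk$, which is exactly what the existence of the elements $\lambda$ guarantees.
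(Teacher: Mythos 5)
Your proof is correct and follows essentially the same route as the paper: characteristic two is handled via Remark \ref{rmk:polarization} (where moreover $J_+=J_-$), the bipartite case of \ref{prp:non-twisted-BGA1} via the $\sigma$-stable polarization of Proposition \ref{prp:bipartite}, and \ref{prp:non-twisted-BGA2} by rescaling a single arrow in each ``negative'' $\sigma$-orbit by an $m(v)$-th root of $-1$ so that the generators of $J_\mp$ are carried to unit multiples of those of $J_\pm$. The paper's proof packages the same diagonal automorphism via a set of representative arrows $\mathcal{A}$ of the orbits in $\UG_0^-$ rather than your node-colouring $\beta$ and scalars $\rho_v$, which is only a notational difference.
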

\begin{proof}
Concerning the first claim let us note that
for any polarization $\varepsilon$ of $\A$ it holds that $\nuu = \id_\A \ $ if and only if $\ 
\varepsilon_{\sigma(a)} \, \varepsilon_a \, a= a
$ for any arrow $a \in Q_1$.
\begin{itemize}
\item 
Assume that the field $\kk$
has
 characteristic two.
Then any polarization of $\A$ induces a trivial involution by definition.
This was also observed in
Remark \ref{rmk:polarization}.
Moreover, it holds that $z=w$ and $\AI = \BI$.
This shows \ref{prp:non-twisted-BGA1} and \ref{prp:non-twisted-BGA2}.
\item
Assume that the graph $\UG$ is bipartite. Then there is a $\sigma$-stable polarization $\epsilon$ of $\A$ by
Proposition \ref{prp:bipartite} \eqref{prp:bipartite1} $\Rightarrow$ \eqref{prp:bipartite2}. It follows that $\epsilon^{\star} = \id_\A$, so \ref{prp:non-twisted-BGA1} is true.

Now we assume also that $\kk$ has the required roots of $-1$.
We may 
write $\UG_0 = \UG_0^+ \,{\overset{\cdot}{\cup}}\, \UG_0^-$,  where $\UG_0^\pm = 
\{ v_a \in \UG_0 \ | \ \epsilon_a = \pm \}$.
We may choose a complete set of representatives $\mathcal{A} \subseteq Q_1$
of the $\sigma$-orbits in $\UG_0^-$.
For each arrow $\alpha \in \mathcal{A}$
there is an element
$\lambda_\alpha \in \kk$
such that $\lambda_\alpha^{m(\alpha)} = -1$
by assumption.\\
Let $\psi: \A \to \A$ 
be the unique $\Rx$-algebra morphism 
which preserves any idempotent of $\A$ as well as any arrow of the complement $ Q_1\backslash{\mathcal{A}}$,
and maps 
each arrow $\alpha \in \mathcal{A}$
to 
its multiple
$\lambda_\alpha \, \alpha$.
\\ Let $\beta,\gamma \in Q_1$ be two distinct arrows starting at the same vertex.  We may assume that $v_\beta \in \UG_0^+$ and $v_\gamma \in \UG_0^-$.
Then there is a unique arrow $\alpha \in\mathcal{A}$ such that $v_\alpha = v_{\gamma}$.
Note that
$\psi(c_\beta) = c_\beta$, $\psi(c_\gamma) =  - \lambda_\alpha c_\gamma$ and
$\psi(c_\beta^{m(\beta)}+c_\gamma^{m(\gamma)}) = c_\beta^{m(\beta)} - c_\gamma^{m(\gamma)}$.
By Lemma \ref{lem:BGA-relations}
the map
$\psi$ induces a $\kk$-algebra isomorphism
$\AI \cong \BI$.
This completes the proof of
statement
\ref{prp:non-twisted-BGA2}.
\qedhere
\end{itemize}
\end{proof}
We give an example of the isomorphism constructed in the proof above:
\begin{ex} 
\label{ex:non-twisted-BGA}
Let $\A$ be the completed path algebra of the
quiver $(Q,I)_n$ from Example \ref{ex:cuspidal}.
Its graph $\UG$ is a line by Example \ref{ex:line}.
The map $m \equiv 1$ yields the algebras $\AI \cong \kk Q/(I+J_+)$ and $\BI \cong \kk Q/(I+J_-)$,
where $(Q,I+J_\pm)$ is the quiver
$$
\begin{tikzcd}[nodes={
baseline=0pt,
inner sep=1pt}, 
every label/.append style={font=\normalsize, color=black},
column sep=1.45cm, cells={shape=circle},
arrows={->,thick, >=stealth'},
]
\bullet \ar[r, bend left, "a_1",
""{name=a1, near start, inner sep =-1pt},
""{name=a2, near end,  inner sep =-1pt}
] 
& \bullet \ar[r, bend left, "a_2", 
""{name=c1, near start, inner sep =-1pt},
""{name=c2, near end, swap, inner sep =-1pt}
] 
 \ar[l, bend left, "b_1",
""{name=b1, near start, inner sep =-1pt},
""{name=b2, near end, inner sep =-1pt}
] 
& {\bullet}
  \ar[l, bend left, "b_2",
""{name=d1, near start, swap, inner sep =-1pt},
""{name=d2, near end, inner sep =-1pt}
]  
&[-1cm] \ldots
&[-1.5cm]
 \phantom{\bullet}   \ar[r, bend left, "a_n", 
""{name=e1, near start, inner sep =-1pt},
""{name=e2, near end,  inner sep =-1pt}
] 
&
 \bullet  \ar[l, bend left, "b_n",
""{name=f1, near start,  inner sep =-1pt},
""{name=f2, near end, inner sep =-1pt}
]  
 \\[-1.05cm]
{\scriptstyle 1} & {\scriptstyle 2} & {\scriptstyle 3} & & & {\scriptstyle n}
\arrow[from=a2, to=c1, densely dotted, red, bend left,-]
\arrow[from=d2, to=b1, densely dotted, red, bend left,-]
\end{tikzcd} 
\
\begin{array}{l}
a_1 b_1  a_1 = b_n a_n b_n = 
a_{j+1} \, a_j = b_j \, b_{j+1} = 0, \\
\quad a_j b_j = \mp b_{j+1} a_{j+1}  
\\
\quad 
{\scriptstyle{\text{for each index }}}
\scriptstyle{1 \ \leq \ j \ < \ n}
\end{array}
$$
The last  proof yields
the $\kk$-algebra isomorphism $\psi\!: \AI \overset{\sim}{\longrightarrow} \BI$ such that
$a_j \mapsto (-1)^j a_j$, $\ b_j \mapsto b_j$ and $e_j \mapsto e_j$ for any index 
$1 \leq j \leq n$.
\end{ex}
In the above example
 the ribbon graph order $\A$ is a flat deformation 
of the Brauer graph algebra $\AI$
and the latter has also 
a Lie-theoretic interpretation \cite{Mazorchuk-Stroppel}.

\begin{rmk}
If $n = 3$, $m \equiv 2$ and $\kk = \R$ in  Example \ref{ex:non-twisted-BGA} above, then $\AI \not\cong \BI$.
This shows that the conclusion of Proposition \ref{prp:non-twisted-BGA} \ref{prp:non-twisted-BGA2} may be wrong in general.
\end{rmk}

\subsection{Characterization of symmetric ribbon graph orders}\label{subsec:final}
Finally, we 
may characterize symmetric ribbon graph orders as follows:
\begin{thm}\label{thm:symmetry}
Let $\A$  be a ribbon graph order, $\kk$ its base field, $\UG$ the graph
of a complete gentle quiver of $\A$, $m\!: \UG_0 \to \N^+$ be any map on the set of its $\sigma$-orbits
and
$\varepsilon$ be any polarization of $\A$.\\
We view $\A$ as an $\Rx$-order via the map $\Rx \to \A$, $t \mapsto z=\sum_{a \in Q_1} c_a^{m(v_a)}$
and set $w = \sum_{a \in Q_1} \varepsilon_a c_a^{m(v_a)}$.
Consider the following six conditions:
\begin{enumerate}[label=$({\mathsf{\arabic*}})$, ref=${\mathsf{\arabic*}}$]
\item \label{thm:symmetry1} The ribbon graph order $\A$ is symmetric.
\item \label{thm:symmetry2} 
The twisted Brauer graph algebra $\AI = \A/z\A$ is symmetric.
\item \label{thm:symmetry3}
The graph $\UG$ is bipartite or
the field $\kk$ has characteristic two.
\item \label{thm:symmetry4} There is a polarization $\epsilon$ of $\A$ such that its involution $\epsilon^{\star}$ is trivial. 
\item \label{thm:symmetry6} The algebra $\AI$ is isomorphic to some Brauer graph algebra.
\item \label{thm:symmetry5} The algebra $\AI$ is isomorphic to the Brauer graph algebra $\BI = \A/\A w \A$.
\end{enumerate}
Then the following statements hold:
\begin{enumerate}
\item 
The implications 
$\eqref{thm:symmetry1}\Leftrightarrow
\eqref{thm:symmetry2} \Leftrightarrow
\eqref{thm:symmetry3} \Leftrightarrow
\eqref{thm:symmetry4} \Leftarrow
\eqref{thm:symmetry6}
\Leftarrow
\eqref{thm:symmetry5}
$
are true.
\item
If the field $\kk$ has a root of the polynomial $x^{m(v)} + 1$ for each node $v \in \UG_0$,
 then
all six conditions are equivalent.
\end{enumerate}
\end{thm}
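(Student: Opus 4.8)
The plan is to assemble the asserted implications from the results proved earlier in this section and to close them into a cycle; the genuine content already resides in those statements, so the work here is to chain them correctly and to pin down exactly where the hypothesis on roots of $x^{m(v)}+1$ is needed.

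For part (a) I would first dispose of the unconditional core $\eqref{thm:symmetry1}\Leftrightarrow\eqref{thm:symmetry2}\Leftrightarrow\eqref{thm:symmetry3}\Leftrightarrow\eqref{thm:symmetry4}$. Here $\eqref{thm:symmetry1}\Rightarrow\eqref{thm:symmetry2}$ is Proposition \ref{prp:bimodule-quotient} \ref{prp:bimodule-quotient2} (symmetricity ascends along $\A\twoheadrightarrow\AI$), the step $\eqref{thm:symmetry2}\Rightarrow\eqref{thm:symmetry3}$ is Proposition \ref{prp:symmetric-twisted-BGA}, and $\eqref{thm:symmetry3}\Rightarrow\eqref{thm:symmetry4}$ is Proposition \ref{prp:non-twisted-BGA} \ref{prp:non-twisted-BGA1}. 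To close the block I would invoke Theorem \ref{thm:A} for the polarization $\epsilon$ of \eqref{thm:symmetry4}: when $\epsilon^\star=\id_\A$ the twisted bimodule $\A_{\epsilon^\star}$ is just the regular $\A$-bimodule $\A$, so the isomorphism $\A^\vee\cong\A_{\epsilon^\star}$ becomes $\A^\vee\cong\A$, i.e.\ $\A$ is symmetric; this is $\eqref{thm:symmetry4}\Rightarrow\eqref{thm:symmetry1}$.

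The two remaining implications of part (a) attach conditions \eqref{thm:symmetry6} and \eqref{thm:symmetry5}. The step $\eqref{thm:symmetry5}\Rightarrow\eqref{thm:symmetry6}$ is immediate, since $\BI=\A/\A w\A$ is a Brauer graph algebra by Lemma \ref{lem:BGA-relations}, so $\AI\cong\BI$ exhibits $\AI$ as isomorphic to some Brauer graph algebra. The step $\eqref{thm:symmetry6}\Rightarrow\eqref{thm:symmetry4}$ passes through \eqref{thm:symmetry2}: Theorem \ref{thm:twisted-BGA} \ref{thm:twisted-BGA3} shows that if $\AI$ is isomorphic to some Brauer graph algebra then $\AI$ is symmetric, and the core equivalence converts this into \eqref{thm:symmetry4}. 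This finishes part (a).

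For part (b) the only missing link is an implication from one of the equivalent conditions back to the strongest one, \eqref{thm:symmetry5}. I would supply $\eqref{thm:symmetry3}\Rightarrow\eqref{thm:symmetry5}$: the assumption that $\kk$ contains a root of $x^{m(v)}+1$ for every node $v$ is exactly the requirement $\lambda^{m(v)}=-1$ in Proposition \ref{prp:non-twisted-BGA} \ref{prp:non-twisted-BGA2}, whose standing hypothesis ``$\UG$ bipartite or $\chr\kk=2$'' is \eqref{thm:symmetry3}; the proposition then yields $\AI\cong\BI$. Combined with part (a) this gives $\eqref{thm:symmetry3}\Rightarrow\eqref{thm:symmetry5}\Rightarrow\eqref{thm:symmetry6}\Rightarrow\eqref{thm:symmetry4}\Leftrightarrow\eqref{thm:symmetry3}$, so all six conditions collapse into one equivalence class. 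The \emph{main obstacle} is not computational but a matter of bookkeeping about the role of \eqref{thm:symmetry5}: without the root hypothesis it is strictly stronger than the rest — for the line graph with $m\equiv 2$ and $\kk=\R$ one has \eqref{thm:symmetry1} (hence \eqref{thm:symmetry6}) yet $\AI\not\cong\BI$, so \eqref{thm:symmetry5} fails — and one must check that the root hypothesis enters only through this last implication while the core equivalence and the attaching steps of part (a) remain genuinely unconditional.
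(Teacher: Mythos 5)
Your proposal is correct and follows essentially the same route as the paper: the same chain of citations for the core equivalences $\eqref{thm:symmetry1}\Leftrightarrow\eqref{thm:symmetry2}\Leftrightarrow\eqref{thm:symmetry3}\Leftrightarrow\eqref{thm:symmetry4}$, the same attachment of \eqref{thm:symmetry6} and \eqref{thm:symmetry5}, and the same use of Proposition \ref{prp:non-twisted-BGA} \ref{prp:non-twisted-BGA2} for part (b). The one point you gloss over is in $\eqref{thm:symmetry4}\Rightarrow\eqref{thm:symmetry1}$: Theorem \ref{thm:A} is proved for the $\Rx$-structure $t\mapsto\sum_{a}c_a$ (i.e.\ $m\equiv 1$), whereas the present theorem fixes $t\mapsto\sum_{a}c_a^{m(v_a)}$, so one must first invoke Corollary \ref{cor:symmetry} to reduce to $m\equiv 1$ --- exactly as the paper does.
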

\begin{proof}
\begin{itemize}
\item 
The implications
\eqref{thm:symmetry1} $\Rightarrow$ \eqref{thm:symmetry2} $\Leftarrow$ \eqref{thm:symmetry6}
  follow
from Theorem
 \ref{thm:twisted-BGA} \ref{thm:twisted-BGA3}.
\item
\eqref{thm:symmetry2} $\Rightarrow$ \eqref{thm:symmetry3} $\Rightarrow$
\eqref{thm:symmetry4}
were shown in Propositions
\ref{prp:symmetric-twisted-BGA}
and \ref{prp:non-twisted-BGA} \ref{prp:non-twisted-BGA1}.
\item \eqref{thm:symmetry4} $\Rightarrow$ \eqref{thm:symmetry1}: 
Let $\epsilon$ be a polarization of $\A$ such that $\epsilon^{\star} = \id_\A$.
To show that $\A$ is symmetric, we may assume that $m \equiv 1$ by Corollary \ref{cor:symmetry}.
In this case, there is an isomorphism $\A = \A_{\epsilon^{\star}}   \cong \omegac$ of $\A$-bimodules by Theorem \ref{thm:A}. 
\item 
If  $\kk$ has the required roots of $-1$, then 
\eqref{thm:symmetry4} $\Rightarrow$ \eqref{thm:symmetry5}
by Proposition \ref{prp:non-twisted-BGA}
\ref{prp:non-twisted-BGA2}.
\qedhere
\end{itemize}
\end{proof}
At last, we give some examples and remarks on the last theorem.
\begin{ex}
\begin{enumerate}
\item
The  graph of the ribbon graph order
$\A = \kk \langle \! \langle a,b \rangle\! \rangle / (a^2,b^2)$
is a loop.
By Theorem \ref{thm:symmetry} 
the order 
$\A$
is symmetric if and only if $\chr \kk = 2$.
\item More generally, a circuit graph order $\A_n$ with $n$ vertices from Lemma \ref{lem:circuit-order}
is symmetric if and only if $n$ is even or $\chr \kk = 2$.
\end{enumerate}
\end{ex}

\begin{rmk}
There
 are direct proofs for the implications \eqref{thm:symmetry1} $\Leftrightarrow$
\eqref{thm:symmetry3}
$\Leftrightarrow$ 
\eqref{thm:symmetry4} in Theorem \ref{thm:symmetry}.
In particular, the introduction of twisted Brauer graph algebras is not  necessary to prove an order-theoretic version of the last theorem.
%
\end{rmk}

Concerning  quotients of ribbon graph orders let us note the following:
\begin{rmk}
A $\kk$-algebra
is \emph{special biserial} if and only if it is some quotient of some ribbon graph order. 
If the field $\kk$ is algebraically closed, 
then
Brauer graph $\kk$-algebras 
are precisely the finite-dimensional special biserial symmetric $\kk$-algebras
by a result of Schroll \cite{Schroll}*{Theorem 1.1}.
\end{rmk}
Let $\mathcal{A}$, $\mathcal{B}$ and $\mathcal{C}$ denote the isomorphism classes of twisted Brauer graph algebras,  usual Brauer graph algebras respectively symmetric special biserial algebras over some fixed field $\kk$.
These three classes of algebras are related as follows:
$$
\begin{tikzcd}[ampersand replacement=\&, column sep=2cm]
{\mathcal{A}} \ar[dashed,equal,yshift=2pt]{rr}[swap,yshift=-5pt]{
  \mathcal{A} \, = \, \mathcal{B} \text{ if } \chr \kk = 2 } 
\ar[hookleftarrow]{rd}
\& \&
\mathcal{B}  \ar[hookrightarrow]{rr}{\mathcal{B} \, = \, \mathcal{C} \text{ if }\kk \text{ is alg.\,closed \cite{Schroll}}   } \& \& \mathcal{C} \\
\&  \mathcal{A}  \, \cap  \, \mathcal{B} \ar[hookrightarrow]{ru}
\ar[hookrightarrow]{rr}[swap]{  
\begin{array}{l} \mathcal{A} \, \cap \, \mathcal{B} \,  =  \, \mathcal{A} \, \cap \, \mathcal{C} \text{ if $\kk$ has }  \\
  \text{$m$-th root of $-1 \, \forall m \in \N^+$} 
\end{array}}
 \&
\& 
\mathcal{A} \cap \mathcal{C} \ar[hookrightarrow]{ru}[swap]{
\begin{array}{l}
\mathcal{A}  \, \cap \, \mathcal{C}   
 \, =  \, \mathcal{C} \text{ if } \chr \kk = 2 \\
 \text{ and } \kk \text{ is alg.\,closed} \end{array} }
\end{tikzcd}
$$

Finally, let us recall two equivalent notions related to ribbon graphs:
\begin{rmk}\label{rmk:dessin}
A \emph{dimer model} or a \emph{dessin d'enfants}  can be viewed as 
a bipartite ribbon graph together with a bicoloring of the nodes such that any edge connects nodes of different colors
$($see \cite{Lando-Zvonkin}*{Sections 1.5, 2.1} respectively
 \cite{Bocklandt}$)$.\\
By the last Theorem, any dimer model 
together with some choice of multiplicities on its nodes
give rise to
the $0$-Calabi-Yau category $\Perf \AI$ of a symmetric $\kk$-algebra $\AI$ and the $1$-Calabi-Yau category $\Perffd \A$ of a symmetric $\Rx$-order $\A$.
\\
We refer to 
Broomhead's work \cite{Broomhead}
for results and a survey
on the relationship between dimer models and certain $3$-Calabi-Yau categories.
\end{rmk}
In summary,
Theorem \ref{thm:symmetry} suggests
that twisted Brauer graph algebras are closer to ribbon graph orders
in terms of their homological properties than the symmetric Brauer graph algebras.

In a subsequent paper
\cite{Gnedin2},
we will study further homological features of ribbon graph orders and twisted Brauer graph algebras from an order-theoretic perspective.

\section*{Acknowledgement}
\thanks{The author would like to thank  William Crawley-Boevey for pointing out the reference \cite{Butler-Ringel}.
He is grateful to
Steffen König and Markus Reineke for helpful advice and productive discussions. 
}

\bibliographystyle{alpha}
\bibliography{References}

\end{document}